\DeclareMathOperator{\Gal}{Gal}
\DeclareMathOperator{\Img}{Im}
\DeclareMathOperator{\Inf}{inf}
\DeclareMathOperator{\Ker}{Ker}
\DeclareMathOperator{\res}{res}
\DeclareMathOperator{\dd}{d}
\DeclareFontFamily{U}{wncy}{}
\DeclareFontShape{U}{wncy}{m}{n}{<->wncyr10}{}
\DeclareSymbolFont{mcy}{U}{wncy}{m}{n}
\DeclareMathSymbol{\Sha}{\mathord}{mcy}{"58}
\DeclareMathSymbol{\sha}{\mathord}{mcy}{"78}
\begin{document}

\newtheorem{thm}{Theorem}[section]
\newtheorem{cor}[thm]{Corollary}
\newtheorem{lem}[thm]{Lemma}
\newtheorem{fact}[thm]{Fact}
\newtheorem{prop}[thm]{Proposition}
\newtheorem{defin}[thm]{Definition}
\newtheorem{exam}[thm]{Example}
\newtheorem{examples}[thm]{Examples}
\newtheorem{rem}[thm]{Remark}
\newtheorem{case}{\sl Case}
\newtheorem{claim}{Claim}
\newtheorem{question}[thm]{Question}
\newtheorem{conj}[thm]{Conjecture}
\newtheorem*{notation}{Notation}
\swapnumbers
\newtheorem{rems}[thm]{Remarks}
\newtheorem*{acknowledgment}{Acknowledgements}
\newtheorem*{thmno}{Theorem}

\newtheorem{questions}[thm]{Questions}
\numberwithin{equation}{section}

\newcommand{\gr}{\mathrm{gr}}
\newcommand{\inv}{^{-1}}
\newcommand{\isom}{\cong}
\newcommand{\dbC}{\mathbb{C}}
\newcommand{\F}{\mathbb{F}}
\newcommand{\dbN}{\mathbb{N}}
\newcommand{\Q}{\mathbb{Q}}
\newcommand{\dbR}{\mathbb{R}}
\newcommand{\dbU}{\mathbb{U}}
\newcommand{\Z}{\mathbb{Z}}
\newcommand{\calG}{\mathcal{G}}
\newcommand{\K}{\mathbb{K}}
\newcommand{\rmH}{\mathrm{H}}
\newcommand{\bfH}{\mathbf{H}}
\newcommand{\bfLam}{\mathbf{\Lambda}}
\newcommand{\calX}{\mathcal{X}}
\newcommand{\calY}{\mathcal{Y}}
\newcommand{\calV}{\mathcal{V}}
\newcommand{\calE}{\mathcal{E}}
\newcommand{\calW}{\mathcal{W}}
\newcommand{\rmr}{\mathrm{r}}
\newcommand{\Span}{\mathrm{Span}}
\newcommand{\rmQ}{\mathrm{Q}}
\newcommand{\rmT}{\mathrm{T}}
\newcommand{\rmq}{\mathrm{q}}


\newcommand{\hac}{\hat c}
\newcommand{\hatheta}{\hat\theta}

\title[Pro-$p$ groups of $p$-absolute Galois type]{Groups of $p$-absolute Galois type \\ that are not absolute Galois groups}
\author{Simone Blumer}
\address{Department of Mathematics and Applications, University of Milano Bicocca, 20125 Milan, Italy EU --- Department of Mathematics, University of Zaragoza, 50009 Zaragoza, Spain EU}
\email{s.blumer@campus.unimib.it}
\author{Alberto Cassella}
\address{Department of Mathematics and Applications, University of Milano Bicocca, 20125 Milan, Italy EU --- Department of Mathematics, University of Zaragoza, 50009 Zaragoza, Spain EU}
\email{a.cassella@campus.unimib.it}
\author{Claudio Quadrelli}
\address{Department of Science and High-Tech, University of Insubria, 22100 Como, Italy EU}
\email{claudio.quadrelli@uninsubria.it}
\date{\today}
\dedicatory{To Pablo Spiga, an enthusiast algebraist \\ and a ``graphomaniac'', with admiration.}

\begin{abstract}
 Let $p$ be a prime. We study pro-$p$ groups of $p$-absolute Galois type, as defined by Lam--Liu--Sharifi--Wake--Wang.
 We prove that the pro-$p$ completion of the right-angled Artin group associated to a chordal simplicial graph is of $p$-absolute Galois type, and moreover it satisfies a strong version of the Massey vanishing property.
 Also, we prove that Demushkin groups are of $p$-absolute Galois type, and that the free pro-$p$ product --- and, under certain conditions, the direct product --- of two pro-$p$ groups of $p$-absolute Galois type satisfying the Massey vanishing property, is again a pro-$p$ group of $p$-absolute Galois type satisfying the Massey vanishing property.
 Consequently, there is a plethora of pro-$p$ groups of $p$-absolute Galois type satisfying the Massey vanishing property that do not occur as absolute Galois groups.
\end{abstract}

\subjclass[2010]{Primary 12G05; Secondary 20E18, 20J06, 12F10}

\keywords{Galois cohomology, absolute Galois groups, right-angled Artin groups, Massey products, Norm Residue Theorem, chordal graphs}

\maketitle

\section{Introduction}
\label{sec:intro}
Throughout the paper, $p$ will denote a prime number.
Given a field $\K$, let $\bar\K_s$ denote the separable closure of $\K$, and let $\K(p)$ denote the maximal $p$-extension of $\K$.
The {\sl absolute Galois group} $G_{\K}:=\Gal(\bar\K_s/\K)$ is a profinite group, and the Galois group $G_{\K}(p):=\Gal(\K(p)/\K)$, called the {\sl maximal pro-$p$ Galois group} of $\K$, is the maximal pro-$p$ quotient of $G_{\K}$.
A major difficult problem in Galois theory is the characterization of profinite groups which occur as absolute Galois groups of fields, and of pro-$p$ groups which occur as maximal pro-$p$ Galois groups (see, e.g., \cite[\S~3.12]{WDG} and \cite[\S~2.2]{birs}).
Observe that if a pro-$p$ group $G$ does not occur as the maximal pro-$p$ Galois group of a field containing a root of 1 of order $p$, then it does not occur as the absolute Galois group of any field (see, e.g., \cite[Rem.~3.3]{cq:noGal}).
For this reason, the pursue of obstructions which detect effectively pro-$p$ groups which do not occur as absolute Galois groups has great prominence in current research in Galois theory (see, e.g., \cite{BLMS,cem,eq:kummer,cq:noGal}).

The celebrated {\sl Bloch-Kato Conjecture} --- established by M.~Rost and V.~Voevodsky, with Ch.~Weibel's ``patch'', and now called the Norm Residue Theorem (see \cite{rost,voev, weibel,HW:book}) --- provides a description of the Galois cohomology of absolute Galois groups in terms of low-degree cohomology.
As a consequence, if $\K$ is a field containing a root of 1 of order $p$, the structure of the {\sl $\Z/p$-cohomology algebra} 
$$\bfH^\bullet(G_{\K}(p)):=\coprod_{n\geq0}\rmH^n(G_{\K}(p),\Z/p),$$
endowed with the {\sl cup-product} $\smallsmile$, is determined by degrees 1 and 2.
This remarkable results provided new fuel --- and new substantial results --- to the research on maximal pro-$p$ Galois groups of fields.

Subsequently, the paper by M.~Hopkins and K.~Wickelgren \cite{hopwick} kicked off a hectic research on {\sl Massey products} in Galois cohomology.
Given a pro-$p$ group $G$ and an integer $n\geq2$, the {\sl $n$-fold Massey product} is a multi-valued map which associates a (possibly empty) subset of $\rmH^2(G,\Z/p)$ to a $n$-tuple of elements of $\rmH^1(G,\Z/p)$ (if $n=2$ it coincides with the cup-product).
Moreover, $G$ is said to satisfy the {\sl $n$-Massey vanishing property} if every non-empty value of an $n$-fold Massey product contains 0. 
{In \cite{eli:massey}, E.~Matzri proved that if $\K$ is a field containing a root of 1 of order $p$, then $G_{\K}(p)$ has the 3-Massey vanishing property (see also \cite{EM:massey} and \cite{MT:masseyall}}): this result produced new obstructions for the realization of pro-$p$ groups as absolute Galois groups (see, e.g., \cite[\S~7]{mt:massey}); and Mina\v{c} and T\^an conjectured that such a $G_{\K}(p)$ has the $n$-Massey vanishing property for {\sl every} integer $n\geq 3$ (see \cite[Conj.~1.1]{mt:conj}).

Another cohomological property enjoyed by maximal pro-$p$ Galois groups --- and related to both the Norm Residue Theorem and Massey products --- is the one which gives the title to this paper.
A pro-$p$ group $G$ is said to be {\sl of $p$-absolute Galois type} if, for every $\alpha\in \rmH^1(G,\Z/p)$, the sequence
\begin{equation}\label{eq:exactsequence intro}
 \xymatrix@C=1.3truecm{\rmH^1(N,\Z/p)\ar[r]^-{\mathrm{cor}_{N,G}^1} & \rmH^1(G,\Z/p)\ar[r]^-{\textvisiblespace\smallsmile\alpha} &
 \rmH^2(G,\Z/p)\ar[r]^-{\res_{G,N}^2} & \rmH^2(N,\Z/p)}
\end{equation}
is exact, where $N=\Ker(\alpha)$, and the middle arrow denotes the cup-product by $\alpha$ (see \cite[\S~1.4]{LLSWW}).
This condition, generalized to arbitrary cohomological degrees, is satisfied by the absolute Galois group of a field containing a root of 1 of order $p$, and it is heavily used in the proof of the Norm Residue Theorem (see \cite[Thm.~3.6]{HW:book}).
Y.H.J~Lam, Y.~Liu, R.T.~Sharifi, P.~Wake, and J.~Wang proved that the sequence \eqref{eq:exactsequence intro} is exact at $\rmH^1(G,\Z/p)$ if, and only if, the $p$-fold Massey product associated to the $p$-tuple $\alpha,\ldots, \alpha,\beta$ (where $\alpha$ appears $p-1$ times) contains 0 whenever $\alpha,\beta$ are elements of $\rmH^1(G,\Z/p)$ such that the cup-product $\alpha\smallsmile\beta$ is trivial; and moreover if $G$ is of $p$-absolute Galois type then it has the 3-Massey vanishing property (see \cite{LLSWW} and \S~\ref{ssec:pabsgaltype}).
 
It is natural to ask how well pro-$p$ groups satisfying these cohomological properties --- the Massey vanishing properties and being of $p$-absolute Galois type --- ``approximate'' maximal pro-$p$ Galois groups of fields containing a root of 1 of order $p$: i.e., if there are (and how ``many'') pro-$p$ groups satisfying these cohomological properties but which do not occur as maximal pro-$p$ Galois groups of fields containing a root of 1 of order $p$.

Keeping this question in mind, we focus on the family of {\sl right-angled Artin pro-$p$ groups} (pro-$p$ RAAGs for short).
The pro-$p$ RAAG $G_\Gamma$ associated to a simplicial graph $\Gamma$ is the pro-$p$ completion of the discrete right-angled Artin group associated to $\Gamma$.
Right-angled Artin groups have surprising richness and flexibility, and played a prominent role in geometric group theory in recent decades (for an overview on right-angled Artin groups, see \cite{RAAGs}).
Moreover, pro-$p$ RAAGs share several properties of their discrete brothers (see, e.g., \cite{lorensen,KW:raags}): in particular, they have a very rich subgroup structure, and their $\Z/p$-cohomology algebra depends only on degrees 1 and 2.
For this reasons, pro-$p$ RAAGs are extremely interesting also from a Galois-theoretic perspective.

Our first goal is to prove that every pro-$p$ RAAG satisfies a strong Massey vanishing property, introduced by
A.~P\'al and E.~Szab\'o in \cite{pal:massey} (see also \cite[\S~4]{JT:U4}).

\begin{thm}\label{thm:massey intro}
 Let $\Gamma$ be a simplicial graph, and let $G_\Gamma$ be the associated pro-$p$ RAAG.
 For every integer $n\geq3$, $G_\Gamma$ has the strong $n$-Massey vanishing property, i.e., for every $n$-tuple $\alpha_1,\ldots,\alpha_n$ of elements of $\rmH^1(G_\Gamma,\Z/p)$ such that the $n-1$ cup-products $$\alpha_1\smallsmile\alpha_2,\;\alpha_2\smallsmile\alpha_3,\;\ldots,\;\alpha_{n-1}\smallsmile\alpha_n$$ are trivial, the associated $n$-fold Massey product contains 0. 
\end{thm}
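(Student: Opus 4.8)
The plan is to translate the statement into the existence of a suitable unipotent representation, and then to construct that representation by hand using the defining relations of a RAAG. Recall the standard dictionary between Massey products and upper unitriangular representations: writing $U_{n+1}(\Z/p)$ for the group of $(n+1)\times(n+1)$ upper unitriangular matrices over $\Z/p$, the $n$-fold Massey product $\langle\alpha_1,\dots,\alpha_n\rangle$ is defined and contains $0$ if and only if there is a continuous homomorphism $\rho\colon G_\Gamma\to U_{n+1}(\Z/p)$ whose entries just above the diagonal realize the given classes, i.e. $\rho(g)_{k,k+1}=\alpha_k(g)$ for all $g$ and all $k=1,\dots,n$. (A genuine homomorphism to the full group $U_{n+1}(\Z/p)$, rather than merely to its quotient by the center, is precisely a defining system whose value vanishes.) Thus it suffices to produce such a $\rho$ under the hypothesis that the consecutive cup products $\alpha_k\smallsmile\alpha_{k+1}$ all vanish.

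Next I would exploit the presentation of $G_\Gamma$. As the pro-$p$ completion of the right-angled Artin group on $\Gamma=(V,E)$, the group $G_\Gamma$ is topologically generated by elements $g_v$, $v\in V$, subject only to the commutation relations $[g_u,g_v]=1$ for $\{u,v\}\in E$. Since $U_{n+1}(\Z/p)$ is a finite $p$-group, a continuous homomorphism $\rho\colon G_\Gamma\to U_{n+1}(\Z/p)$ is the same datum as an assignment $v\mapsto M_v:=\rho(g_v)$ such that $M_uM_v=M_vM_u$ whenever $\{u,v\}\in E$; no further relations intervene. Writing $M_v=I+N_v$ with $N_v$ strictly upper triangular, the commutation relation is equivalent to $[N_u,N_v]=N_uN_v-N_vN_u=0$. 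So the whole problem reduces to choosing, for each vertex $v$, a strictly upper triangular matrix $N_v$ with prescribed superdiagonal $(N_v)_{k,k+1}=\alpha_k(g_v)$, in such a way that the nilpotent parts assigned to adjacent vertices commute.

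The construction I propose is the simplest possible: take $N_v$ supported entirely on the first superdiagonal, with $(N_v)_{k,k+1}=\alpha_k(g_v)$ and all other entries zero. For two such \emph{band} matrices the product $N_uN_v$ is supported on the second superdiagonal, with $(N_uN_v)_{k,k+2}=\alpha_k(g_u)\,\alpha_{k+1}(g_v)$; hence
\[
[N_u,N_v]_{k,k+2}=\alpha_k(g_u)\,\alpha_{k+1}(g_v)-\alpha_k(g_v)\,\alpha_{k+1}(g_u),\qquad k=1,\dots,n-1,
\]
and every other entry of the commutator vanishes identically. Now I would invoke the description of the $\Z/p$-cohomology of $G_\Gamma$ as the quadratic (exterior Stanley--Reisner) algebra on $V$: the classes $x_u\smallsmile x_v$ with $\{u,v\}\in E$ form a basis of $\rmH^2(G_\Gamma,\Z/p)$, and expanding $\alpha_k=\sum_v\alpha_k(g_v)\,x_v$ gives $\alpha_k\smallsmile\alpha_{k+1}=\sum_{\{u,v\}\in E,\,u<v}\big(\alpha_k(g_u)\alpha_{k+1}(g_v)-\alpha_k(g_v)\alpha_{k+1}(g_u)\big)\,x_u\smallsmile x_v$. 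The hypothesis $\alpha_k\smallsmile\alpha_{k+1}=0$ therefore says exactly that $\alpha_k(g_u)\alpha_{k+1}(g_v)-\alpha_k(g_v)\alpha_{k+1}(g_u)=0$ for every edge $\{u,v\}$, which is precisely the vanishing of the commutator entries displayed above. Hence $[N_u,N_v]=0$ for every edge, the assignment $g_v\mapsto I+N_v$ defines a homomorphism $\rho\colon G_\Gamma\to U_{n+1}(\Z/p)$ with the required superdiagonal, and the Massey product contains $0$.

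This is where the strength of the statement becomes transparent, and where the only real subtlety lies: for band matrices the commutator of $N_u$ and $N_v$ can only be nonzero on the second superdiagonal, so the obstructions one must kill are indexed exactly by the consecutive pairs $(\alpha_k,\alpha_{k+1})$ and by nothing longer. This is what allows the minimal hypothesis of the strong Massey vanishing property to suffice, and it is the reason no iterative construction of higher defining cochains --- the usual source of difficulty in Massey-product arguments --- is needed here. The only points requiring care are the sign conventions in the Massey--unipotent dictionary, which are absorbed by replacing each $\alpha_k$ with $\pm\alpha_k$ (affecting neither the hypotheses nor the conclusion), and the uniform treatment of the prime $p=2$, for which the same determinantal identity and the same cohomological computation go through verbatim.
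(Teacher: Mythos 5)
Your proposal is correct and follows essentially the same route as the paper's proof: both apply Dwyer's unipotent-representation criterion (Proposition~\ref{prop:masse unip}) and send each generator $v$ to the band matrix $I+N_v$ supported on the first superdiagonal with entries $\alpha_k(v)$, then use the Stanley--Reisner basis of $\rmH^2(G_\Gamma,\Z/p)$ to translate the vanishing of the consecutive cup-products into exactly the determinantal identities needed for adjacent generators' images to commute. The only cosmetic difference is that you verify commutation via $[N_u,N_v]=0$ while the paper compares $\tilde\rho(v_iv_j)$ with $\tilde\rho(v_jv_i)$ explicitly, which is the same computation.
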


Observe that for a general pro-$p$ group $G$, the condition on the triviality of the cup-products $\alpha_1\smallsmile\alpha_2,\ldots,\alpha_{n-1}\smallsmile\alpha_n$ is a {\sl necessary} condition for the non-emptiness of the $n$-fold Massey product associated to the $n$-tuple $\alpha_1,\ldots,\alpha_n\in \rmH^1(G,\Z/p)$ (see, e.g., \cite[\S~2]{mt:massey} and Proposition~\ref{prop:massey cup} below).
By Theorem~\ref{thm:massey intro}, this condition is also {\sl sufficient} if $G$ is a pro-$p$ RAAG.

Our second goal is to show that a wide family of simplicial graphs yields pro-$p$ RAAGs of $p$-absolute Galois type.
Recall that a simplicial graph is said to be {\sl chordal} (or {\sl triangulated}) if each of its cycles of length at least 4 has a chord, i.e. if it contains no induced cycles other than triangles.
We prove the following.

\begin{thm}\label{thm:AbsGalType intro}
 Let $\Gamma$ be a simplicial graph, and let $G_\Gamma$ be the associated pro-$p$ RAAG.
 Then $G_\Gamma$ is a pro-$p$ group of $p$-absolute Galois type in the following cases:
 \begin{itemize}
  \item[(i)] if $\Gamma$ is chordal;
  \item[(ii)] if $\Gamma$ consist of a row of subsequent squares, i.e., $\Gamma$ has geometric realization
  \begin{equation}\label{eq:squaregraph intro}
     \xymatrix@R=1.5pt{ \bullet\ar@{-}[r] \ar@{-}[ddd]& \bullet\ar@{-}[r] \ar@{-}[ddd]& \bullet\ar@{-}[r] \ar@{-}[ddd] & 
    \bullet\ar@{.}[rr] \ar@{-}[ddd] && \bullet\ar@{-}[r] \ar@{-}[ddd] &  \bullet\ar@{-}[r] \ar@{-}[ddd] & \bullet\ar@{-}[ddd]  \\  \\  
  \\ \bullet\ar@{-}[r] &\bullet\ar@{-}[r] &\bullet\ar@{-}[r] & \bullet\ar@{.}[rr]  &&\bullet\ar@{-}[r] &\bullet\ar@{-}[r]& \bullet  }   \end{equation}
 \end{itemize}
\end{thm}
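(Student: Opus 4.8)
The plan is to prove exactness of \eqref{eq:exactsequence intro} separately at its two inner terms, reducing the degree-$1$ part to input we already have and concentrating the combinatorial work in degree $2$. We may assume $\alpha\neq0$, since for $\alpha=0$ one has $N=G_\Gamma$ and the sequence is trivially exact; thus $[G_\Gamma:N]=p$. For exactness at $\rmH^1(G_\Gamma,\Z/p)$ — which in fact holds for \emph{every} pro-$p$ RAAG, with no hypothesis on $\Gamma$ — I would invoke the criterion of Lam--Liu--Sharifi--Wake--Wang recalled in the Introduction: the sequence is exact there exactly when the $p$-fold Massey product of $(\alpha,\dots,\alpha,\beta)$, with $\alpha$ repeated $p-1$ times, contains $0$ whenever $\alpha\smallsmile\beta=0$. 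For $p$ odd, $\alpha\smallsmile\alpha=0$ by graded-commutativity, so the only possibly nonzero consecutive cup product in this tuple is $\alpha\smallsmile\beta$; the hypothesis of the strong $p$-Massey vanishing property supplied by Theorem~\ref{thm:massey intro} is thus met, and the Massey product contains $0$ (for $p=2$ the $p$-fold product is the cup product and the condition is vacuous). This is the only place Theorem~\ref{thm:massey intro} enters.

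It remains to show $\Ker(\res^2_{G_\Gamma,N})=\Img(\textvisiblespace\smallsmile\alpha)$. The inclusion $\supseteq$ is automatic, since $\res^1_{G_\Gamma,N}(\alpha)=\alpha|_N=0$ forces $\res^2(\beta\smallsmile\alpha)=\res^1(\beta)\smallsmile\res^1(\alpha)=0$. The theorem therefore amounts to the reverse inclusion: every degree-$2$ class restricting trivially to $N$ is of the form $\beta\smallsmile\alpha$. Here I would use that $\bfH^\bullet(G_\Gamma)$ is the exterior Stanley--Reisner algebra of $\Gamma$, so that, writing $\alpha=\sum_v a_v\,e_v$, one has $\rmH^2(G_\Gamma,\Z/p)=\bigoplus_{\{u,v\}\in E(\Gamma)}\Z/p\,(e_u\smallsmile e_v)$ and $\textvisiblespace\smallsmile\alpha$ is an explicit linear map. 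The elementary model is the clique $K_n$, where $G_{K_n}\isom\Z_p^{\,n}$: solving $\alpha$ for a coordinate with $a_v\neq0$ rewrites every $e_u\smallsmile e_v$ as $a_v^{-1}\,e_u\smallsmile\alpha$, yielding both inclusions at once.

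For case (i) I would induct on the number of vertices using the recursive structure of chordal graphs. By Dirac's theorem a nonempty chordal $\Gamma$ has a simplicial vertex $v$, whose neighbourhood $K=N_\Gamma(v)$ is a clique; with $\Gamma'=\Gamma\setminus\{v\}$ (again chordal) the star $\{v\}\cup K$ is a simplex meeting $\Gamma'$ in $K$ and joined to $\Gamma'$ by no further edges, so
\begin{equation*}
 G_\Gamma\;\isom\;G_{\Gamma'}\amalg_{G_K}\bigl(G_K\times\langle v\rangle\bigr),\qquad G_K\isom\Z_p^{\,|K|}.
\end{equation*}
I would feed this amalgamated free pro-$p$ product into the Mayer--Vietoris sequence in $\Z/p$-cohomology, both for $G_\Gamma$ and for the index-$p$ subgroup $N$, so as to compute $\rmH^2(G_\Gamma,\Z/p)$, $\rmH^2(N,\Z/p)$ and $\res^2$ simultaneously, expressing $\Ker(\res^2)$ through the corresponding data of $G_{\Gamma'}$, of $G_K\times\langle v\rangle$, and of the free abelian $G_K$ (handled by the clique computation above), and matching it against $\Img(\textvisiblespace\smallsmile\alpha)$.

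The hard part is that $N=\Ker(\alpha)$ does not respect the amalgam: $\alpha$ restricts independently to each factor, so $N$ meets $G_{\Gamma'}$, $G_K\times\langle v\rangle$ and $G_K$ in index $1$ or $p$ according to which coefficients $a_v$ vanish, and the Mayer--Vietoris sequence for $N$ is glued along a (possibly index-$p$) subgroup of $G_K$. Reconciling the two Mayer--Vietoris sequences compatibly with $\res^2$, and verifying that no class beyond $\Img(\textvisiblespace\smallsmile\alpha)$ survives in $\Ker(\res^2)$, is exactly where chordality is used, Dirac's theorem guaranteeing that every separating star is a clique and hence $G_K$ free abelian. For case (ii) the ladder is assembled one square at a time: each square is $C_4=K_{2,2}$, a join, so $G_{C_4}\isom F\times F$ with $F$ free pro-$p$ of rank $2$, and consecutive squares are amalgamated over a rung-edge group $G_{K_2}\isom\Z_p^{\,2}$, whence the same induction applies. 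I expect this non-chordal case to be the most delicate point, since the induced $4$-cycles destroy the clique-separator structure and the interaction of $\alpha$ with the $C_4$-blocks must be analysed by hand.
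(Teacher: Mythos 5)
Your reduction steps are sound and agree with the paper's: exactness of \eqref{eq:exactsequence intro} at $\rmH^1(G_\Gamma,\Z/p)$ follows from Theorem~\ref{thm:massey intro} together with Theorem~\ref{prop:LLSWW}--(ii) (and in fact, by Theorem~\ref{prop:LLSWW}--(i), one only ever needs exactness at $\rmH^2$), and the inclusion $\Img(c_\alpha)\subseteq\Ker(\rmr_{G_\Gamma,N})$ is automatic because the sequence is a complex. But everything after that in your proposal is a plan, not a proof: the reverse inclusion $\Ker(\rmr_{G_\Gamma,N})\subseteq\Img(c_\alpha)$ --- which is the entire content of the theorem --- is never established. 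You set up the simplicial-vertex decomposition $G_\Gamma\simeq G_{\Gamma'}\amalg_{G_K}\bigl(G_K\times\langle v\rangle\bigr)$ and propose to compare Mayer--Vietoris sequences for $G_\Gamma$ and for $N$, and then you write that reconciling them and ``verifying that no class beyond $\Img(\textvisiblespace\smallsmile\alpha)$ survives'' is ``exactly where chordality is used''; that sentence \emph{is} the gap. The same happens in case (ii), where the analysis of the interaction of $\alpha$ with the $C_4$-blocks is announced as ``the most delicate point'' but not carried out.

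Moreover, the deferred step is harder than your sketch suggests. Since $N$ need not be compatible with the amalgam, its structure is governed by pro-$p$ Bass--Serre theory: whenever $\alpha$ vanishes on a factor, $N$ contains $p$ distinct conjugates of that factor, so the graph of pro-$p$ groups underlying $N$ has several vertices and edges, not a single gluing ``along a (possibly index-$p$) subgroup of $G_K$''; writing out its Mayer--Vietoris sequence and tracking $\rmr_{G_\Gamma,N}$ through it, for an arbitrary vanishing pattern of $\alpha$ on the vertices, is a substantial computation you have not done. The paper sidesteps $N$ entirely: it fixes explicit finitely generated subgroups $H_\alpha\subseteq H\subseteq N$ (generated by the elements $w_i=v_iv_{i+1}^{-1}$ together with the vertices outside $\Gamma_\alpha$), exhibits explicit commutator relations in them which, via Proposition~\ref{prop:cupprod}, yield the lower bounds of Propositions~\ref{prop:res2} and~\ref{prop:res 1} on $\dim\Img(\rmr_{G_\Gamma,H})$, computes $\dim\Img(c_\alpha)$ exactly (Propositions~\ref{prop:cup alpha} and~\ref{prop:dim V0cup}), and closes the argument by the dimension count \eqref{eq:proof chordal AbsGalType}; case (ii) is handled the same way using the subsquare relations. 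To salvage your route you would need either to carry out the Bass--Serre/Mayer--Vietoris analysis of $N$ in full, or to replace $N$ by explicit subgroups and dimension estimates as the paper does.
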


An example of pro-$p$ RAAG associated to a chordal simplicial graph is the pro-$p$ group
\[
G=\left\langle\: v_1,\:v_2,\:\ldots,\: v_d\:\mid\:[v_1,v_2]=[v_2,v_3]=\ldots=[v_{d-1},v_d]=1\:\right\rangle,
\]
which is the pro-$p$ RAAG associated to the simplicial graph $\mathrm{L}_{d-1}$ with geometric realization
\[  \mathrm{L}_{d-1}\qquad\xymatrix{\bullet\ar@{-}[r] &\bullet\ar@{-}[r] & \bullet\ar@{.}[r]  &\bullet\ar@{-}[r] &\bullet}
\]
with $d$ vertices and $d-1$ edges.

{
	A simplicial graph $\Gamma$ is said to be {\sl of elementary type} if no induced subgraph of $\Gamma$ has either of the two forms
	\[
	\xymatrix@R=1.5pt{ \bullet\ar@{-}[r] \ar@{-}[ddd] & \bullet\ar@{-}[ddd] \\
		\mathrm{C}_4\qquad\qquad && \\  \\ \bullet\ar@{-}[r] & \bullet}
	\qquad\qquad
	\xymatrix@R=1.5pt{ \\ \\ \mathrm{L}_3\qquad\bullet\ar@{-}[r] & \bullet\ar@{-}[r]  & \bullet\ar@{-}[r] & \bullet}
	\]
	--- simplicial graphs of elementary type are sometimes called {\sl Droms graphs}, as C.~Droms showed that these are precisely the simplicial graphs such that all subgroups of the associated RAAGs are again RAAGs (see \cite{droms:etc}).
	In \cite{sz:raags}, I.~Snopce and P.A.~Zalesski{\u{\i}} proved that given a simplicial graph $\Gamma$ with associated pro-$p$ RAAG $G_\Gamma$, one has $G_\Gamma\simeq G_{\K}(p)$ for some field $\K$ containing a root of 1 of order $p$ if, and only if,
	$\Gamma$ is of elementary type (see \cite[Thm.~1.2]{sz:raags}).
	For example,} every simplicial graph as in Theorem~\ref{thm:AbsGalType intro}--(ii) is not of elementary type, and thus the associated pro-$p$ RAAG $G$ does not occur as the maximal pro-$p$ Galois group of a field containing a root of 1 of order $p$.

Observe that every simplicial graph of elementary type is chordal, but chordal simplicial graphs containing a length-3 path as an induced subgraph is not of elementary type --- e.g., if $d\geq4$ the simplicial graph $\mathrm{L}_d$ is not of elementary type, and thus the associated pro-$p$ RAAG $G$ does not occur as the maximal pro-$p$ Galois group of a field containing a root of 1 of order $p$.

Then, we turn our attention to other sources of pro-$p$ groups of $p$-absoulte Galois type.
Besides pro-$p$ RAAGs, also {\sl Demushkin groups} are pro-$p$ groups of $p$-absolute Galois type, independently on their realizability as maximal pro-$p$ Galois groups of fields (see Remark~\ref{rem:Demushkin Galois}).

\begin{thm}\label{thm:Demushkin intro}
 Let $G$ be a Demushkin group. Then $G$ is of $p$-absolute Galois type.
\end{thm}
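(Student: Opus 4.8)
The plan is to exploit the fact that a Demushkin group is precisely a Poincar\'e duality pro-$p$ group of dimension $2$, and that this property passes to open subgroups. Fix $\alpha\in\rmH^1(G,\Z/p)$. If $\alpha=0$ then $N=G$ and the sequence \eqref{eq:exactsequence intro} is exact for trivial reasons, so I assume $\alpha\neq0$; then $N=\Ker(\alpha)$ is an open normal subgroup with $G/N\cong\Z/p$ acting trivially on $\Z/p$. Since open subgroups of $\mathrm{PD}^2$ pro-$p$ groups are again $\mathrm{PD}^2$, the subgroup $N$ is itself a Demushkin group; in particular $\rmH^2(N,\Z/p)\cong\Z/p$, the corestriction $\mathrm{cor}^2_{N,G}\colon\rmH^2(N,\Z/p)\to\rmH^2(G,\Z/p)$ is an isomorphism, and the cup-product pairings $\langle\,,\rangle_G$ on $\rmH^1(G,\Z/p)$ and $\langle\,,\rangle_N$ on $\rmH^1(N,\Z/p)$, valued in the respective top cohomology groups $\cong\Z/p$, are both non-degenerate.

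For exactness at $\rmH^2(G,\Z/p)$ I would argue as follows. On the one hand $\mathrm{cor}^2_{N,G}\circ\res^2_{G,N}$ equals multiplication by $[G:N]=p$, which vanishes mod $p$; since $\mathrm{cor}^2_{N,G}$ is an isomorphism this forces $\res^2_{G,N}=0$, whence $\Ker(\res^2_{G,N})=\rmH^2(G,\Z/p)$. On the other hand, non-degeneracy of $\langle\,,\rangle_G$ together with $\alpha\neq0$ yields some $\beta$ with $\beta\smallsmile\alpha\neq0$, so the map $\textvisiblespace\smallsmile\alpha$ is onto the one-dimensional space $\rmH^2(G,\Z/p)$. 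Thus $\Img(\textvisiblespace\smallsmile\alpha)=\rmH^2(G,\Z/p)=\Ker(\res^2_{G,N})$, as required.

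The heart of the argument is exactness at $\rmH^1(G,\Z/p)$, which I would obtain from Poincar\'e duality via the projection formula. The inflation-restriction sequence for $N\trianglelefteq G$ gives $\Ker(\res^1_{G,N})=\mathrm{inf}\,\rmH^1(G/N,\Z/p)=\langle\alpha\rangle$. The projection formula reads $\mathrm{cor}_{N,G}(x\smallsmile\res_{G,N}(\beta))=\mathrm{cor}_{N,G}(x)\smallsmile\beta$ for $x\in\rmH^1(N,\Z/p)$ and $\beta\in\rmH^1(G,\Z/p)$; taking $\beta=\alpha$ and using $\res^1_{G,N}(\alpha)=0$ already gives $\mathrm{cor}^1_{N,G}(x)\smallsmile\alpha=0$, i.e.\ $\Img(\mathrm{cor}^1_{N,G})\subseteq\Ker(\textvisiblespace\smallsmile\alpha)$. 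To get the reverse inclusion I would identify $\rmH^2(N,\Z/p)$ with $\rmH^2(G,\Z/p)$ through the isomorphism $\mathrm{cor}^2_{N,G}$; the projection formula then becomes the adjunction $\langle x,\res^1_{G,N}(\beta)\rangle_N=\langle\mathrm{cor}^1_{N,G}(x),\beta\rangle_G$, exhibiting $\mathrm{cor}^1_{N,G}$ as the transpose of $\res^1_{G,N}$ with respect to the two perfect pairings. Consequently $\Img(\mathrm{cor}^1_{N,G})^{\perp_G}=\Ker(\res^1_{G,N})=\langle\alpha\rangle$, and taking orthogonal complements once more (legitimate since $\langle\,,\rangle_G$ is non-degenerate) yields $\Img(\mathrm{cor}^1_{N,G})=\langle\alpha\rangle^{\perp_G}=\Ker(\textvisiblespace\smallsmile\alpha)$, which is precisely exactness at $\rmH^1(G,\Z/p)$.

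The main obstacle, and the step deserving the most care, is the precise duality bookkeeping in the last paragraph: one must check that open subgroups of Demushkin groups are again Demushkin with one-dimensional top cohomology, that $\mathrm{cor}^2_{N,G}$ is genuinely an isomorphism, and that the fundamental classes of $N$ and $G$ can be matched so that the projection formula literally becomes the adjunction between $\mathrm{cor}^1_{N,G}$ and $\res^1_{G,N}$. Once this compatibility is in place the cohomological algebra is forced, and the argument is uniform in $p$: for odd $p$ the pairing $\langle\,,\rangle_G$ is alternating and for $p=2$ it may be symmetric, but only its non-degeneracy is used.
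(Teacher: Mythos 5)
Your argument is correct for infinite Demushkin groups, but it takes a genuinely different route from the paper's. The paper handles exactness at $\rmH^2(G,\Z/p)$ exactly as you do (non-degeneracy of the cup-product makes $c_\alpha$ surjective onto the one-dimensional $\rmH^2(G,\Z/p)$, and the sequence is a complex), but then it simply invokes Theorem~\ref{prop:LLSWW}--(i) of Lam--Liu--Sharifi--Wake--Wang: exactness at $\rmH^2$ for every $\alpha$ already forces exactness at $\rmH^1$ for every $\alpha$. Your duality argument replaces that black box: identifying $\rmH^2(N,\Z/p)\cong\rmH^2(G,\Z/p)$ via $\mathrm{cor}^2_{N,G}$ and reading the projection formula as an adjunction between $\mathrm{cor}^1_{N,G}$ and $\res^1_{G,N}$, then taking double orthogonal complements, is a self-contained proof of $\rmH^1$-exactness. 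It is also more uniform than the supplementary ``explicit'' proof in the paper, which assumes $\dim\rmH^1(G,\Z/p)$ even and proceeds by a hands-on computation with a symplectic basis, the structure of $N$ as a Demushkin group of rank $2+p(d-1)$, and the explicit corestriction formula. What your route costs is the $\mathrm{PD}^2$ input you flagged: that an open subgroup $N$ of an infinite Demushkin group is again Demushkin (cf. \cite{nsw:cohn}, Thm.~3.9.15, which the paper also uses), and that $\mathrm{cor}^2_{N,G}$ is an isomorphism --- the latter holds because corestriction in top degree is surjective for pro-$p$ groups of finite cohomological dimension (Shapiro's lemma plus right-exactness of $\rmH^2(G,-)$ when $\cd G=2$), and both $\rmH^2$'s are one-dimensional.

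There is one genuine, though small, gap: your opening premise that Demushkin groups are precisely the $\mathrm{PD}^2$ pro-$p$ groups fails for the unique \emph{finite} Demushkin group $G\simeq\Z/2$ (with $p=2$), which the paper's definition includes. In that case $N=\Ker(\alpha)$ is trivial, so $\rmH^2(N,\Z/2)=0$, $N$ is not Demushkin, and $\mathrm{cor}^2_{N,G}$ is not an isomorphism; the duality bookkeeping collapses. The theorem is immediate there ($\alpha\smallsmile\alpha\neq0$ gives $\Ker(c_\alpha)=0=\Img(\mathrm{cor}^1_{N,G})$, and $\Img(c_\alpha)=\rmH^2(G,\Z/2)=\Ker(\res^2_{G,N})$ since $\rmH^2(N,\Z/2)=0$), but you must dispose of this case separately before asserting that $N$ is Demushkin with $\mathrm{cor}^2_{N,G}$ an isomorphism; the paper's argument, by contrast, is uniform in all cases.
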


Moreover, one may employ free pro-$p$ products and direct products to combine pro-$p$ groups of $p$-absolute Galois type, and obtain new pro-$p$ groups of $p$-absolute Galois type.

\begin{thm}\label{thm:products intro}
Let $G_1,G_2$ be two pro-$p$ groups of $p$-absolute Galois type.
\begin{itemize}
 \item[(i)] The free pro-$p$ product $G_1\amalg G_2$ of $G_1$ and $G_2$ is a pro-$p$ group of $p$-absolute Galois type.
 \item[(ii)] Assume further that for both $i=1,2$: {\rm (a)} $G_i$ is finitely generated; {\rm (b)} the abelianization of $G_i$ is a free abelian pro-$p$ group; and {\rm (c)} $\rmH^2(G_i,\Z/p)$ is generated by cup-products of elements of $\rmH^1(G_i,\Z/p)$.
 Then also the direct product $G_1\times G_2$ of $G_1$ and $G_2$ is a pro-$p$ group of $p$-absolute Galois type.
\end{itemize} 
\end{thm}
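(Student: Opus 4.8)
The plan is to verify the defining exact sequence \eqref{eq:exactsequence intro} for $G=G_1\amalg G_2$ (resp.\ $G=G_1\times G_2$) by reducing it, via the standard cohomological decomposition of a coproduct (resp.\ product), to the sequences for $G_1$ and $G_2$. Throughout we may assume $\alpha\neq0$, since for $\alpha=0$ one has $N=G$ and \eqref{eq:exactsequence intro} is trivially exact; write $\alpha_i:=\res_{G,G_i}(\alpha)\in\rmH^1(G_i,\Z/p)$ for the two components of $\alpha$. The two basic facts are: for a free pro-$p$ product, $\rmH^n(G_1\amalg G_2,\Z/p)\cong\rmH^n(G_1,\Z/p)\oplus\rmH^n(G_2,\Z/p)$ for all $n\geq1$, the cup product of a class inflated from $G_1$ with one inflated from $G_2$ being zero; while for a direct product with $G_i$ finitely generated the Künneth formula (valid in the relevant degrees thanks to (a)) gives $\rmH^\bullet(G_1\times G_2,\Z/p)\cong\rmH^\bullet(G_1,\Z/p)\otimes\rmH^\bullet(G_2,\Z/p)$, where now the cross cup products survive.

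For part (i) I would first describe $N=\Ker(\alpha)$ through the pro-$p$ Kurosh subgroup theorem: being normal of index $p$, it is a free pro-$p$ product of a free pro-$p$ group $F$ together with, for each $i$, either one copy of $N_i:=\Ker(\alpha_i)$ (of index $p$ in $G_i$) when $\alpha_i\neq0$, or $p$ conjugate copies of $G_i$ when $\alpha_i=0$. Both $\res_{G,N}^2$ and $\textvisiblespace\smallsmile\alpha$ are then block-diagonal for $\rmH^\bullet(G)\cong\rmH^\bullet(G_1)\oplus\rmH^\bullet(G_2)$: the vanishing of cross cup products gives $\beta\smallsmile\alpha=(\beta_1\smallsmile\alpha_1)\oplus(\beta_2\smallsmile\alpha_2)$, while on the $i$-th summand $\res_{G,N}^2$ is $\res_{G_i,N_i}^2$ when $\alpha_i\neq0$ and is injective when $\alpha_i=0$ (with $\rmH^2(F)=0$). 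Hence $\Img(\textvisiblespace\smallsmile\alpha)=\Ker(\res_{G,N}^2)$ summand by summand --- by the $p$-absolute Galois type of $G_i$ when $\alpha_i\neq0$, and because both sides vanish when $\alpha_i=0$ --- giving exactness at $\rmH^2(G)$. Exactness at $\rmH^1(G)$ is the delicate point: here I would describe $\mathrm{cor}_{N,G}^1$ through the transfer $G^{\ab}\to N^{\ab}$ and check, again from the Kurosh decomposition, that $\Img(\mathrm{cor}_{N,G}^1)=\bigoplus_{\alpha_i\neq0}\Img(\mathrm{cor}_{N_i,G_i}^1)$, the free factor and the $\alpha_i=0$ factors contributing nothing since corestriction along them factors through multiplication by the index $p\equiv0\pmod p$; comparing with $\Ker(\textvisiblespace\smallsmile\alpha)=\bigoplus_i\Ker(\textvisiblespace\smallsmile\alpha_i)$ and invoking the hypothesis on each $G_i$ closes the argument.

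For part (ii) the same diagonal philosophy works cleanly only in the untwisted case, where one component, say $\alpha_2$, vanishes. Then $N=N_1\times G_2$, and Künneth identifies $\res_{G,N}^2$, $\textvisiblespace\smallsmile\alpha_1$ and $\mathrm{cor}_{N,G}^1$ with $\res_{G_1,N_1}^\bullet\otimes\id$, $(\textvisiblespace\smallsmile\alpha_1)\otimes\id$ and $\mathrm{cor}_{N_1,G_1}^\bullet\otimes\id$; using the elementary identity $\Ker(\res_{G_1,N_1}^1)=\langle\alpha_1\rangle$ together with the $p$-absolute Galois type of $G_1$, exactness at both spots of \eqref{eq:exactsequence intro} reduces to that for $G_1$ (here $\mathrm{cor}$ kills the $\rmH^0(N_1)\otimes\rmH^1(G_2)$ summand, again because $\mathrm{cor}$ on $\rmH^0$ is multiplication by $p$). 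The real work is the twisted case $\alpha_1,\alpha_2\neq0$, where $N=\Ker(\alpha_1+\alpha_2)$ is no longer a direct product but the preimage in $G$ of a diagonal copy of $\Z/p$ in $\Z/p\times\Z/p$; it fits into $N_1\times N_2\trianglelefteq N\trianglelefteq G$ with both successive quotients $\cong\Z/p$. I would compute $\rmH^\bullet(N)$ and the three maps through this two-step tower, using Künneth for $\rmH^\bullet(N_1\times N_2)$ and the Lyndon--Hochschild--Serre spectral sequence of $N_1\times N_2\trianglelefteq N$. This is exactly where the extra hypotheses enter: condition (b) forces the Bockstein on $\rmH^1(G_i)$ to vanish, hence $\alpha_i\smallsmile\alpha_i=0$, killing the differentials that would otherwise obstruct the spectral-sequence computation, while condition (c) ensures $\rmH^2$ is decomposable, so that every class in $\Ker(\res_{G,N}^2)$ is realized as $\beta\smallsmile\alpha$ after correcting by suitable cross terms in $\rmH^1(G_1)\otimes\rmH^1(G_2)$.

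I expect the main obstacle to be precisely this twisted case of part (ii): pinning down $\rmH^\bullet(N)$ and the exact shape of $\Ker(\res_{G,N}^2)$ and $\Img(\mathrm{cor}_{N,G}^1)$ for the diagonal subgroup, and then matching them against $\Img(\textvisiblespace\smallsmile\alpha)$ and $\Ker(\textvisiblespace\smallsmile\alpha)$. The bookkeeping of cross terms in the Künneth decomposition of $\rmH^2(G)$, together with the control of the $\Z/p$-action and of the spectral-sequence differentials through conditions (b) and (c), is the part that does not reduce formally to the hypotheses on $G_1,G_2$ and will absorb most of the computation; by contrast, the corestriction bookkeeping in part (i) is technically fiddly but conceptually routine once the Kurosh decomposition is in hand.
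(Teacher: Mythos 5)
The decisive tool you are missing is Theorem~\ref{prop:LLSWW}--(i): if the sequence \eqref{eq:exactsequence intro} is exact at $\rmH^2(G,\Z/p)$ for \emph{every} $\alpha$, then $G$ is of $p$-absolute Galois type, so exactness at $\rmH^1$ never has to be checked. This matters twice. In part (i), your block-diagonal argument at $\rmH^2$ is correct and is exactly the paper's proof of Theorem~\ref{thm:freeprod abs Gal type}; but the corestriction analysis you then attempt is not only superfluous --- it is wrong. Corestriction from an index-$p$ subgroup is not multiplication by $p$ (only the composite $\mathrm{cor}\circ\res$ is), and classes supported on a single Kurosh factor corestrict nontrivially. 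Concretely, take $G_1=G_2=\Z_p$, so that $G$ is free pro-$p$ on $x,y$, and $\alpha=x^\ast$ (so $\alpha_2=0$): then $N$ is free on $x^p$ and $x^iyx^{-i}$, $0\leq i\leq p-1$, and the transfer formula \eqref{eq:cor formula} gives $\mathrm{cor}_{N,G}^1(\psi)=y^\ast\neq0$ for $\psi$ the basis element dual to $y$. Writing $c_\alpha$ for the cup-product by $\alpha$, your identity $\Img(\mathrm{cor}_{N,G}^1)=\bigoplus_{\alpha_i\neq0}\Img(\mathrm{cor}_{N_i,G_i}^1)$ would in fact contradict the theorem itself: when $\alpha_2=0$ one has $\rmH^1(G_2,\Z/p)\subseteq\Ker(c_\alpha)$ (cross cup-products vanish in a free product), so exactness at $\rmH^1$ forces $\rmH^1(G_2,\Z/p)\subseteq\Img(\mathrm{cor}_{N,G}^1)$, which your formula excludes.

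In part (ii), your untwisted case agrees with the paper's proof of Theorem~\ref{thm:directprod pabsgaltype}, but the twisted case $\alpha_1,\alpha_2\neq0$ --- which you rightly identify as the real work --- is only a plan, and the plan faces a genuine obstruction: the Lyndon--Hochschild--Serre spectral sequence of $N_1\times N_2\trianglelefteq N$ needs $\rmH^2(N_1,\Z/p)$, $\rmH^2(N_2,\Z/p)$, the $\Z/p$-action and the differentials as input, and none of these is controlled by the hypotheses ($p$-absolute Galois type of $G_i$ pins down only $\Ker(\rmr_{G_i,N_i})$, not $\rmH^2(N_i,\Z/p)$). The paper never computes $\rmH^\bullet(N)$ at all: since \eqref{eq:exactsequence intro} is always a complex, $\Img(c_\alpha)\subseteq\Ker(\rmr_{G,N})$, and by Theorem~\ref{prop:LLSWW}--(i) it suffices to prove $\dim\Img(c_\alpha)+\dim\Img(\rmr_{G,N})\geq\dim\rmH^2(G,\Z/p)=\dim\rmH^2(G_1,\Z/p)+\dim\rmH^2(G_2,\Z/p)+d_1d_2$, where $d_i$ is the (finite, by (a)) minimal number of generators of $G_i$. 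In dual bases $\{\chi_i\}$, $\{\psi_j\}$ with $\chi_{d_1}=\alpha_1$, $\psi_{d_2}=\alpha_2$, the first term is at least $d_1+d_2-1$, witnessed by the independent set $\{\alpha_1\smallsmile\alpha_2\}\cup\{\chi_i\smallsmile\alpha\}\cup\{\alpha\smallsmile\psi_j\}$; the second is bounded below by restricting further to the subgroup $H\leq N$ generated by $x_1,\ldots,x_{d_1-1},y_1,\ldots,y_{d_2-1}$ and $z=x_{d_1}y_{d_2}^{-1}$. It is exactly here that (b) and (c) enter, quite differently from the Bockstein/decomposability roles you assign them: (b) yields $\langle x_1,\ldots,x_{d_1-1},z\rangle\cap\langle y_{d_2}\rangle=\{1\}$, whence $\langle x_1,\ldots,x_{d_1-1},z\rangle\simeq G_1$ (and similarly for $G_2$); (c) then guarantees that restriction carries a basis of $\rmH^2(G_1,\Z/p)$ consisting of cup-products to linearly independent classes in $\rmH^2(H,\Z/p)$; and the relations $[x_i,y_j]=1$ in $H$ contribute $(d_1-1)(d_2-1)$ further independent classes via Proposition~\ref{prop:cupprod}. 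Adding up gives precisely $\dim\rmH^2(G,\Z/p)$, closing the argument with no computation of $\rmH^\bullet(N)$, no corestriction, and no spectral sequences.
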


Notice that pro-$p$ RAAGs, and the pro-$p$ completions of orientable surface groups (which are Demushkin groups) satisfy the three conditions (a)--(c) prescribed in  Theorem~\ref{thm:products intro}--(ii).
Analogously, we prove that also the $n$-Massey vanishing property, for every $n\geq 3$, is preserved by direct products, under the same conditions (a)--(c) as in Theorem~\ref{thm:products intro}--(ii), see Theorem~\ref{thm:directprod massey}.
Incidentally, this implies that a positive solution of Efrat's {\sl Elementary Type Conjecture} implies a positive solution to Mina\v{c}-T\^an's Massey vanishing conjecture for fields containing all roots of 1 of $p$-power order whose maximal pro-$p$ Galois group is finitely generated --- see Corollary~\ref{cor:ETC} ---, and this provides a strong evidence for the latter conjecture.

It is worth underlining that the direct product of two pro-$p$ groups may occur as the maximal pro-$p$ Galois group of a field containing a root of 1 of order $p$ only if both factors occur as the maximal pro-$p$ Galois group of fields containing a root of 1 of order $p$, and one of the two factor is a free abelian pro-$p$ group (see \cite[Prop.~3.2]{koenig}). 
Therefore, Theorem~\ref{thm:AbsGalType intro} and Theorem~\ref{thm:products intro} produce a lot of concrete examples of pro-$p$ groups of $p$-absolute Galois type which do not occur as the maximal pro-$p$ Galois group of a field containing a root of 1 of order $p$, and hence neither as an absolute Galois group.

Altogether, one concludes the following.

\begin{cor}\label{cor:noGal}
 There exist a lot of pro-$p$ groups of $p$-absolute Galois type with the $n$-Massey vanishing property, for every $n\geq3$, that do not occur as maximal pro-$p$ Galois groups of fields containing a root of 1 of order $p$, and hence neither as absolute Galois groups.
\end{cor}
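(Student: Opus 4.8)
The plan is to assemble the preceding theorems into an explicit infinite family of witnesses, so that the vague ``a lot'' is made concrete. First I would fix, for each integer $d\geq4$, the pro-$p$ RAAG $G_{\mathrm{L}_d}$ associated to the path $\mathrm{L}_d$ on $d+1$ vertices. A path contains no cycles, hence is vacuously chordal, so Theorem~\ref{thm:AbsGalType intro}--(i) applies and $G_{\mathrm{L}_d}$ is of $p$-absolute Galois type. On the other hand, for $d\geq4$ the graph $\mathrm{L}_d$ contains $\mathrm{L}_3$ as an induced subgraph, so it is not of elementary type; by the Snopce--Zalesski{\u{\i}} characterization \cite[Thm.~1.2]{sz:raags} recalled in the introduction, $G_{\mathrm{L}_d}$ is therefore \emph{not} isomorphic to $G_{\K}(p)$ for any field $\K$ containing a root of $1$ of order $p$.

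Next I would check the Massey condition. By Theorem~\ref{thm:massey intro}, $G_{\mathrm{L}_d}$ has the strong $n$-Massey vanishing property for every $n\geq3$. Since nonemptiness of an $n$-fold Massey product forces the triviality of the consecutive cup-products $\alpha_1\smallsmile\alpha_2,\ldots,\alpha_{n-1}\smallsmile\alpha_n$ (Proposition~\ref{prop:massey cup}), the strong property implies the ordinary $n$-Massey vanishing property: whenever a value is nonempty the hypotheses of strong vanishing are automatically satisfied, so that value contains $0$. Hence each $G_{\mathrm{L}_d}$ is at once of $p$-absolute Galois type and satisfies the $n$-Massey vanishing property for all $n\geq3$.

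Finally I would draw the Galois-theoretic conclusion. As recalled in the introduction (cf.\ \cite[Rem.~3.3]{cq:noGal}), a pro-$p$ group that does not occur as the maximal pro-$p$ Galois group of a field containing a root of $1$ of order $p$ cannot occur as the absolute Galois group of any field; combined with the previous two paragraphs, this shows that every $G_{\mathrm{L}_d}$ with $d\geq4$ is of the asserted type. As $\dim_{\Z/p}\rmH^1(G_{\mathrm{L}_d},\Z/p)=d+1$, these groups are pairwise non-isomorphic, so the family is already infinite. To reinforce the abundance I would further invoke Theorem~\ref{thm:products intro} together with Theorem~\ref{thm:directprod massey}: any direct product $G_1\times G_2$ of nontrivial pro-$p$ groups satisfying conditions (a)--(c), with neither factor free abelian, again has both properties, yet by \cite[Prop.~3.2]{koenig} cannot be a maximal pro-$p$ Galois group. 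The main obstacle does not lie in this corollary --- which is essentially a bookkeeping combination of the theorems above --- but in establishing those theorems; the only genuine verification here is the implication ``strong $\Rightarrow$ ordinary'' $n$-Massey vanishing, which rests on the necessity of consecutive cup-product triviality.
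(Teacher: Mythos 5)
Your proposal is correct and follows essentially the same route as the paper: the paper obtains the corollary by combining Theorems~\ref{thm:massey intro}--\ref{thm:products intro} and Theorem~\ref{thm:directprod massey} with the obstructions of Theorem~\ref{thm:ilirpavel} (Snopce--Zalesski{\u{\i}}) and Proposition~\ref{prop:directprod Galois}, which is exactly your bookkeeping argument. Your only additions --- instantiating the concrete infinite family $G_{\mathrm{L}_d}$, $d\geq4$, with the dimension count for pairwise non-isomorphism, and spelling out that strong $n$-Massey vanishing implies the ordinary property via Proposition~\ref{prop:massey cup} --- are observations the paper itself makes (in the introduction and in Remark~\ref{rem:cupdef}, respectively), not a different method.
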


From Corollary~\ref{cor:noGal}, one sees that the property of being of $p$-absolute Galois type and the Massey vanishing properties, used to filter out maximal pro-$p$ Galois groups of fields containing a root of 1 of order $p$ (and thus also absolute Galois pro-$p$ groups) from the class of pro-$p$ groups provide --- even combined together --- a strainer whose mesh is rather coarse.
The result of Snopce and Zalesski{\u{\i}} is based on the study of the {\sl Bloch-Kato property} and of {\sl 1-cyclotomicity} (see \S~\ref{ssec:Gal} below): so, these two properties --- which are consequences of the Norm Residue Theorem and of Kummer theory respectively --- appear to be much more restrictive, and effective for the pursue of pro-$p$ groups that are not absolute Galois groups.
In fact, the strength of these two properties lies in the fact that they are hereditary with respect to closed subgroups.
Therefore, it would be interesting to investigate pro-$p$ groups such that {\sl every closed subgroup} is of $p$-absolute Galois type. 
At this aim, we prove that it is enough to verify that every {\sl open} subgroup is of $p$-absolute Galois type (see Proposition~\ref{prop:hereditarily}).

\medskip
{\small The paper is structured as follows.
In \S~\ref{sec:cohom} we list some facts on $\Z/p$-cohomology of pro-$p$ groups (cf. \S~\ref{ssec:cohomology}), and some properties of simplicial graphs and pro-$p$ RAAGs (cf. \S~\ref{ssec:graphs}--\ref{ssec:raags cohom}), which are preliminary to the proofs of our results.
In \S~\ref{sec:massey} we give a brief (and self-contained) tractation on Massey products in $\Z/p$-cohomology of pro-$p$ groups, and we prove Theorem~\ref{thm:massey intro} (cf. \S~\ref{ssec:massey raags}).
In \S~\ref{sec:raags pabsgaltype} we prove Theorem~\ref{thm:AbsGalType intro} (cf. \S~\ref{ssec:thm1}), after some preliminary technical results whose proofs mix together combinatorics and group cohomology (cf. \S~\ref{ssec:cup product}--\ref{ssec:res}).
Finally, in \S~\ref{sec:prod} we deal with free pro-$p$ products and direct products (cf. \S~\ref{ssec:freeprod} and \S~\ref{ssec:directprod} respectively) and with Demushkin groups (cf. \S~\ref{ssec:Demushkin}), and we prove Theorems~\ref{thm:Demushkin intro}--\ref{thm:products intro}; while in \S~\ref{ssec:hered} we define pro-$p$ groups hereditarily of $p$-absolute Galois type, and we prove Proposition~\ref{prop:hereditarily}.
}

\section{Pro-$p$ RAAGs and cohomology}\label{sec:cohom}

We work in the world of pro-$p$ groups.
Henceforth, every subgroup of a pro-$p$ group will be tacitly assumed to be closed, and the generators of a 
subgroup will be intended in the topological sense.
For a pro-$p$ group $G$ and a positive integer $n$, $G^n$ will denote the subgroup of $G$ generated by the $n$-th powers of all elements of $G$.
Moreover, for two elements $g,h\in G$, we set $${}^gh=ghg^{-1},\qquad\text{and}\qquad[g,h]={}^{g}h\cdot  h^{-1},$$
and for two subgroups $H_1,H_2$ of $G$, $[H_1,H_2]$ will denote the subgroup of $G$ generated by all commutators $[g,h]$
with $g\in H_1$ and $h\in H_2$.
In particular, $G'$ will denote the closure of the commutator subgroup of $G$, and $\Phi(G)$ will denote the {\sl Frattini subgroup} of $G$, i.e., $\Phi(G)=G^p\cdot G'$. 


\subsection{Preliminaries on pro-$p$ groups and cohomology}\label{ssec:cohomology}

For the definition and properties of $\Z/p$-cohomology of pro-$p$ groups, we refer to \cite[Ch.~I, \S~4]{serre:galc} and to \cite[Ch.~III, \S~9]{nsw:cohn}.
The definition of the cup-product may be found in \cite[Ch.~I, \S~4]{nsw:cohn}.

Let $G$ be a pro-$p$ group, and consider $\Z/p$ as a trivial $G$-module.
Then 
\begin{equation}\label{eq:H1}
 \rmH^1(G,\Z/p)=\mathrm{Hom}(G,\Z/p)\simeq(G/\Phi(G))^\ast,
\end{equation}
where the second term is the group of homomorphisms of pro-$p$ groups from $G$ to $\Z/p$, and $\textvisiblespace^\ast$ denotes the dual as a $\Z/p$-vector space.
Thus, if $G$ is finitely generated and $\calX=\{x_1,\ldots,x_d\}$ is a minimal generating set of $G$, then $\rmH^1(G,\Z/p)$ has a basis $\calX^\ast=\{\chi_1,\ldots,\chi_d\}$ dual to $\calX$, i.e., $\chi_i(x_j)=\delta_{ij}$ for every $i,j\in\{1,\ldots,d\}$.

A short exact sequence of pro-$p$ groups
\begin{equation}\label{eq:presentation}
 \xymatrix{  \{1\} \ar[r] & R\ar[r] &F \ar[r] & G\ar[r] & \{1\}}
\end{equation}
is said to be a {\sl minimal presentation of $G$} if $F$ is a free pro-$p$ group and $R\subseteq\Phi(F)$ or, equivalently, if the epimorphism $F\twoheadrightarrow G$ induces an isomorphism
$$\Inf_{G,F}^1\colon \rmH^1(G,\Z/p)\overset{\sim}{\longrightarrow} \rmH^1(F,\Z/p).$$
The elements of $R$ are called {\sl relations} of $G$, and a minimal set generating $R$ as a normal subgroup of $F$ is called a {\sl set of defining relations} of $G$.
A minimal presentation \eqref{eq:presentation} induces an exact sequence in cohomology
\begin{equation}
 \begin{tikzpicture}[descr/.style={fill=white,inner sep=2pt}]
        \matrix (m) [
            matrix of math nodes,
            row sep=3em,
            column sep=3em,
            text height=1.5ex, text depth=0.25ex
        ]
        {    0 & \rmH^1(G,\Z/p) & \rmH^1(F,\Z/p)   &  \rmH^1(R,\Z/p)^F \\
               & \rmH^2(G,\Z/p) & \rmH^2(F,\Z/p)=0 &   \\
           };

        \path[overlay,->, font=\scriptsize,>=latex]
        (m-1-1) edge node[auto] {} (m-1-2) 
        (m-1-2) edge node[auto] {$\Inf_{G,F}^1$} (m-1-3) 
        (m-1-3) edge node[auto] {$\res_{F,R}^1$} (m-1-4) 
        (m-1-4) edge[out=355,in=175] node[descr,yshift=0.3ex] {$\mathrm{trg}$} (m-2-2)
        (m-2-2) edge node[auto] {$\Inf_{G,F}^2$} (m-2-3) ;
\end{tikzpicture}
        \end{equation}
where $\Inf_{G,F}^1$ is an isomorphism, and $\rmH^2(F,\Z/p)=0$ as $F$ is free.
Hence, also the map $\mathrm{trg}$ is an isomorphism.
Altogether, one has 
\begin{equation}\label{eq:H2}
\xymatrix{
\left(R/R^p[R,F]\right)^\ast\ar[r]^-{\sim} & \rmH^1(R,\Z/p)^F\ar[r]^-{\mathrm{trg}}& \rmH^2(G,\Z/p)}
\end{equation}
{(for the left-side isomorphism see, e.g., \cite[Ch.~I, \S~4.3]{serre:galc}).}
Therefore, since a set of defining relations of $G$ gives rise to a basis of the $\Z/p$-vector space $R/R^p[R,F]$, it yields a basis of $\rmH^2(G,\Z/p)$, via the isomorphism $\mathrm{trg}$, as well.

Let $F^{(3)}$ be the third term of the {\sl descending $p$-central series} of $F$, i.e.,  
$$F^{(3)}=\Phi(F)^p\cdot[\Phi(F),F]$$
(see, e.g., \cite[Def.~3.8.1]{nsw:cohn}).
Then the quotient $\Phi(F)/F^{(3)}$ is a $p$-elementary abelian pro-$p$ group, and thus it may be considered as a $\Z/p$-vector space.
If we consider --- with an abuse of notation --- $\calX=\{x_1,\ldots,x_d\}$ as a minimal generating set of $F$ too, then 
every element $r$ of $F'$ may be written as
 \[
  r=\prod_{1\leq i<j\leq d}[x_i,x_j]^{a_{ij}}\cdot y\qquad 
 \]
for some $a_{ij}\in\Z/p$ and $y\in F^{(3)}$, and the exponents $a_{ij}$ are uniquely determined \cite[Prop.~3.9.13--(i)]{nsw:cohn}.
Consequently, the set
\[
 \left\{\:[x_i,x_j]\cdot F^{(3)}\:\mid\:1\leq i<j\leq d\:\right\}
\]
is a linearly independent subset of $\Phi(F)/F^{(3)}$.  

Set $\mathcal{I}=\{1,\ldots,d\}$, and consider the set $\mathcal{I}\times\mathcal{I}$ endowed with the lexicographic order $\prec$ inherited from $\mathcal{I}$, i.e., $(i,j)\prec(i',j')$ if $i<i'$ or $i=i'$ and $j< j'$.
The following  result relates elementary commutators and cup-products (cf. \cite[Prop.~3.9.13--(ii)]{nsw:cohn}).

\begin{prop}\label{prop:cupprod}
Let $G$ be a finitely generated pro-$p$ group with minimal generating set $\calX=\{x_1,\ldots,x_d\}$, let \eqref{eq:presentation} be a minimal presentation of $G$, and let $\calX^\ast=\{\chi_1,\ldots,\chi_d\}$ be the basis of $\rmH^1(G,\Z/p)$ dual to $\mathcal{X}$.
Suppose that $\{r_1,\ldots,r_m\}$ is a subset of $R$ such that
\[\begin{split}
   r_1 &=\left[ x_{i(1)},x_{j(1)}\right]\prod_{\substack{1\leq i<j\leq d \\ (i,j)\succ(i(1),j(1))}}[x_i,x_j]^{a(1)_{i,j}} y_1,\\
   &\;\vdots \\
   r_m&=\left[ x_{i(m)},x_{j(m)}\right]\prod_{\substack{1\leq i<j\leq d \\ (i,j)\succ(i(m),j(m))}}[x_i,x_j]^{a(m)_{i,j}} y_m,
  \end{split}\]
for some $y_1,\ldots,y_m\in F^{(3)}$, where $a(h)_{i,j}\in\Z/p$ for every $h=1,\ldots,m$ and $1\leq i<j\leq d$, and $ (i(1),j(1))\prec \ldots \prec (i(m),j(m))$.
Then $\{\chi_{i(1)}\smallsmile\chi_{j(1)},\:\ldots,\:\chi_{i(m)}\smallsmile\chi_{j(m)}\}$ is a linearly independent subset of $\rmH^2(G,\Z/p)$.
\end{prop}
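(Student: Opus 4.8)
The plan is to use the transgression pairing coming from \eqref{eq:H2} together with the commutator–cup-product correspondence recorded in \cite[Prop.~3.9.13--(ii)]{nsw:cohn}, and then to reduce the claim to the invertibility of an explicit triangular matrix.

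First I would set up the pairing. By \eqref{eq:H2} the transgression is an isomorphism $\mathrm{trg}\colon \rmH^1(R,\Z/p)^F\overset{\sim}{\to}\rmH^2(G,\Z/p)$, and composing with the left-hand isomorphism $\left(R/R^p[R,F]\right)^\ast\cong\rmH^1(R,\Z/p)^F$ of \eqref{eq:H2} identifies $\rmH^2(G,\Z/p)$ with $\left(R/R^p[R,F]\right)^\ast$, hence yields a non-degenerate pairing
\[
\langle\;,\;\rangle\colon \rmH^2(G,\Z/p)\times R/R^p[R,F]\longrightarrow \Z/p.
\]
The single external input I would record is the content of \cite[Prop.~3.9.13--(ii)]{nsw:cohn}: this pairing detects the commutator part of a relation, in the sense that if $r\in R$ is written as $r=\prod_{1\le i<j\le d}[x_i,x_j]^{a_{ij}}\,y$ with $y\in F^{(3)}$ and $a_{ij}\in\Z/p$ (well defined by \cite[Prop.~3.9.13--(i)]{nsw:cohn}), then $\langle\chi_i\smallsmile\chi_j,\,r\rangle=\varepsilon\,a_{ij}$ for every $1\le i<j\le d$, where $\varepsilon\in(\Z/p)^\times$ is a fixed unit independent of $i,j,r$. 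Everything after this is elementary linear algebra.

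Next I would organize the data into the $m\times m$ matrix $M$ over $\Z/p$ with entries $M_{h,k}=\langle\chi_{i(k)}\smallsmile\chi_{j(k)},\,r_h\rangle$ and read off its shape from the echelon form of $r_1,\dots,r_m$ and the formula above. By hypothesis the only elementary commutators occurring in $r_h$ modulo $F^{(3)}$ are the $[x_i,x_j]$ with $(i,j)\succeq(i(h),j(h))$, the leading one $[x_{i(h)},x_{j(h)}]$ occurring with coefficient $1$; hence $M_{h,h}=\varepsilon$. Moreover $M_{h,k}=0$ whenever $k<h$, since then $(i(k),j(k))\prec(i(h),j(h))$ (the leading indices are strictly $\prec$-increasing), so $[x_{i(k)},x_{j(k)}]$ does not occur in $r_h$ and its coefficient is $0$. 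Thus $M$ is upper triangular with every diagonal entry equal to the unit $\varepsilon$, whence $\det M=\varepsilon^{\,m}\ne0$ and $M\in\GL_m(\Z/p)$.

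Finally I would conclude. Suppose $\sum_{k=1}^m\lambda_k\,(\chi_{i(k)}\smallsmile\chi_{j(k)})=0$ in $\rmH^2(G,\Z/p)$ for some $\lambda_k\in\Z/p$; pairing with each $r_h$ gives $\sum_{k}\lambda_k M_{h,k}=0$ for all $h$, i.e.\ $M\lambda=0$, and the invertibility of $M$ forces $\lambda=0$, which is exactly the asserted linear independence. The only delicate point is the normalization in the pairing formula for small $p$ (in particular the role of $p$-th powers and of the Bockstein when $p=2$); but the cup products in play are all off-diagonal ($i(h)<j(h)$) and the relations $r_h$ carry no $p$-th-power part modulo $F^{(3)}$, so the pairing sees precisely the commutator coefficients, and since the triangular argument uses only that the diagonal entries are units, the exact value of $\varepsilon$ is immaterial.
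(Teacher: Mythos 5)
Your proof is correct and coincides with the argument the paper intends: the paper states this proposition with no written proof beyond the citation of \cite[Prop.~3.9.13--(ii)]{nsw:cohn}, which is exactly the transgression-pairing formula you isolate as your single external input, applied through the identification \eqref{eq:H2} of $\rmH^2(G,\Z/p)$ with $\left(R/R^p[R,F]\right)^\ast$. Your upper-triangular matrix computation (unit diagonal from the leading commutators, zeros below the diagonal from the strict $\prec$-ordering of the leading pairs) is precisely the linear algebra that citation leaves implicit, and your remark that only off-diagonal cup products and commutator-only relations occur correctly disposes of the $p=2$ Bockstein issue.
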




Observe that given a set of relations $\{r_1,\ldots,r_d\}\subseteq R\cap F'$ such that their images in the quotient $RF^{(3)}/F^{(3)}$ form a linearly independent subset, one may always assume that they satisfy the properties described in Proposition~\ref{prop:cupprod}--(ii), after performing Gau{\ss} reduction (cf. \cite[Rem.~2.5]{qsv:quadratic}).


Let $H_1,H_2$ be subgroups of a pro-$p$ group $G$ such that $H_1\supseteq H_2$.
Henceforth, for $\alpha\in\rmH^1(H_1,\Z/p)$, $\alpha|_{H_2}\in\rmH^1(H_2,\Z/p)$ will denote the restriction of $\alpha$ to $H_2$, while $\rmr_{H_1,H_2}\colon \rmH^2(H_1,\Z/p)\to\rmH^2(H_2,\Z/p)$ will denote the restriction map in degree 2. 
Recall that for every $\alpha_1,\alpha_2\in\rmH^1(H_1,\Z/p)$, one has
\begin{equation}\label{eq:cup res}
 \rmr_{H_1,H_2}(\alpha_1\smallsmile\alpha_2)=(\alpha_1\vert_{H_2})\smallsmile(\alpha_2\vert_{H_2})
\end{equation}
(cf. \cite[Prop.~1.5.3]{nsw:cohn}).
Moreover, for $\alpha'\in\rmH^1(G,\Z/p)$ and $V,W$ subspaces of $\rmH^1(G,\Z/p)$, we set
 \[
  \begin{split}
   \alpha'\smallsmile V &:= \left\{\alpha'\smallsmile\beta\:\mid\:\beta\in V\right\}, \\
   V\smallsmile W &:= \left\{\beta\smallsmile\beta'\:\mid\:\beta\in V,\:\beta'\in W\right\}. \\
 \end{split}
 \]


\subsection{Graphs and RAAGs}\label{ssec:graphs}
For the definition of simplicial graph we follow \cite[\S~1.1]{graph:book}.
A {\sl simplicial graph} is a pair $\Gamma = (\calV, \calE)$ of sets such that $\calE \subseteq [\calV]^2$, i.e., the elements
of $\calE$ are 2-element subsets of $\calV$ (we always assume implicitly that $\calV\cap\calE=\varnothing$). 
The elements of $\calV$ are the {\sl vertices} of $\Gamma$, the elements of $\calE$ are its {\sl edges}.
One may realize geometrically a simplicial graph by drawing a dot for each vertex and joining two of these dots by a line if the corresponding two vertices form an edge.
Henceforth, we will always deal with finite simplicial graphs, i.e., with a finite number of vertices.

\begin{rem}\rm
In \cite{serre:trees}, a simplicial graph is called an {\sl unoriented combinatorial graph}.
\end{rem}

\begin{defin}\rm
 Let $\Gamma=(\calV,\calE)$ be a simplicial graph.
 \begin{itemize}
  \item[(a)] $\Gamma$ is said to be {\sl complete} if $\calE=[\calV]^2$, i.e., every vertex is joined to any other vertex.
  \item[(b)] A simplicial graph $\Gamma'=(\calV',\calE')$ is a {\sl subgraph} of $\Gamma$ if $\calV'\subseteq \calV$ and $\calE'\subseteq \calE$; $\Gamma'$ is a {\sl proper} subgraph if $\Gamma'\neq\Gamma$; finally, $\Gamma'$ is said to be an {\sl induced} subgraph if in addition $\calE'=\calE\cap[\calV']^2$.
  \item[(c)] An induced subgraph $\Gamma'=(\calV',\calE')$ of $\Gamma$ is called an {\sl $n$-clique of $\Gamma$} if $\Gamma'$ is a complete simplicial graph with $n$ vertices; while $\Gamma'$ is called an {\sl $n$-cycle of $\Gamma$}, with $n\geq 3$, if $\Gamma'$ is a cycle with $n$ vertices, i.e., $\calV'=\{v_1,\ldots,v_n\}$ and
  \[\calE'=\left\{\:\{v_1,v_2\},\:\{v_2,v_3\},\:\ldots,\:\{v_{n-1},v_n\},\:\{v_n,v_1\}\:\right\}.\]
  \item[(d)] $\Gamma$ is said to be the {\sl pasting} of two proper induced subgraphs $\Gamma_1=(\calV_1,\calE_1)$ and $\Gamma_2=(\calV_2,\Gamma_2)$ {\sl along a common induced subgraph} $\Gamma'=(\calV',\calE')$ if $\calV=\calV_1\cup\calV_2$ and $\calV'=\calV_1\cap\calV_2$ --- namely, $\Gamma$ is the ``union'' of $\Gamma_1$ and $\Gamma_2$, and $\Gamma'$ is the ``intersection'' of $\Gamma_1$ and $\Gamma_2$.
 \end{itemize}
\end{defin}

{Given a simplicial graph $\Gamma=(\calV,\calE)$, we set $\dd(\Gamma)=|\calV|$, and we denote the number of connected components of $\Gamma$ by $\rmr(\Gamma)$.}

\begin{exam}\label{exam:graph}\rm
 Set $\calV=\{v_1,\ldots,v_5\}$ and 
 $$\calE=\left\{\:\{v_1,v_2\},\:\{v_1,v_3\},\:\{v_1,v_4\},\:\{v_1,v_5\},\:\{v_2,v_3\},\:\{v_3,v_4\},\:\{v_4,v_5\}\:\right\}.$$
 The simplicial graph $\Gamma=(\calV,\calE)$ has geometric realization
 \[ 
 \xymatrix@R=1.5pt{  &&& v_1 &&& \\ &&& \bullet\ar@{-}[ddddlll]\ar@{-}[ldddd]\ar@{-}[rdddd]\ar@{-}[rrrdddd] & && \\
  \\ \\ &&&\Delta&&&  \\ 
 \bullet\ar@{-}[rr] && \bullet\ar@{-}[rr] && \bullet\ar@{-}[rr] && \bullet
 \\  v_2 && v_3 && v_4 && v_5}
 \]
and it is the pasting of the two induced subgraphs $\Gamma_1=(\calV_1,\calE_1)$ and $\Gamma_2=(\calV_2,\calE_2)$, with $\calV_1=\calV\smallsetminus\{v_5\}$ and $\calV_2=\calV\smallsetminus\{v_2\}$, along the common subgraph $\Delta$, which is the triangle with vertices $v_1,v_3,v_4$.
Moreover, if $\Delta'$ and $\Delta''$ are the triangles with vertices $v_1,v_2,v_3$ and $v_1,v_4,v_5$ respectively, then $\Delta,\Delta',\Delta''$ are the 3-cliques --- and the $3$-cycles --- of $\Gamma$, which has no $n$-cliques nor $n$-cycles for $n>3$.
\end{exam}

As mentioned in the Introduction, a simplicial graph $\Gamma=(\calV,\calE)$ is said to be {\sl chordal} (or {\sl triangulated}) if it has no cycles with more than 3 vertices --- e.g., the simplicial graph in Example~\ref{exam:graph} is chordal.
Clearly, this property is hereditary, namely, every induced subgraph of a chordal simplicial graph is again chordal. 
One has the following characterization of chordal simplicial graphs (cf., e.g., \cite[Prop.~5.5.1]{graph:book}).

\begin{prop}\label{prop:chordal}
A simplicial graph is chordal if, and only if, it can be constructed recursively by pasting along complete subgraphs (i.e., cliques), starting from complete simplicial graphs.
\end{prop}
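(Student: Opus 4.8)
The plan is to prove both implications, in each direction inducting on the number of vertices $\dd(\Gamma)$, the technical heart being a single structural fact about how cliques separate a chordal graph. For the ``if'' direction I would first isolate the key stability property: the pasting of two chordal graphs $\Gamma_1$ and $\Gamma_2$ along a \emph{complete} common induced subgraph $\Gamma'$ is again chordal. Suppose not, and let $C$ be an induced cycle of $\Gamma$ with at least $4$ vertices. Since $\Gamma_1,\Gamma_2$ are induced and chordal, $C$ cannot lie entirely inside either, so it meets both $\calV_1\smallsetminus\calV_2$ and $\calV_2\smallsetminus\calV_1$. The crucial observation is that no edge of $\Gamma$ joins a vertex of $\calV_1\smallsetminus\calV_2$ to one of $\calV_2\smallsetminus\calV_1$ (such an edge would lie in neither $\calE_1$ nor $\calE_2$), so $\calV'=\calV_1\cap\calV_2$ separates these two sets and $C$ meets $\calV'$ in at least two vertices. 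As $\calV'$ is a clique, any two vertices of $\calV'\cap C$ are adjacent in $\Gamma$; if some such pair is non-consecutive on $C$ we get a chord, contradicting that $C$ is induced, while the only remaining possibility is $\calV'\cap C=\{u,w\}$ with $u,w$ consecutive, in which case deleting $u,w$ leaves a single connected subpath of $C$ meeting no vertex of $\calV'$, hence lying on one side only, contradicting that $C$ meets both sides. A routine induction on the number of pastings, with complete graphs (trivially chordal) as base case, then yields the ``if'' direction.

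For the ``only if'' direction the main obstacle, and the real content of the proposition, is the classical fact that \emph{in a chordal graph every minimal vertex separator induces a clique}. I would prove this directly: let $S$ be a set of minimal cardinality separating two non-adjacent vertices $a,b$, and let $C_a,C_b$ be the components of $\Gamma\smallsetminus S$ containing $a,b$. Minimality forces every vertex of $S$ to have a neighbour in each of $C_a$ and $C_b$. Given $u,w\in S$ with $uw\notin\calE$, shortest paths $P_a$ and $P_b$ joining $u$ to $w$ with interior vertices in $C_a$ and $C_b$ respectively concatenate to a cycle of length at least $4$ that is induced apart from the possible chord $uw$ (the shortest paths are induced, and there are no edges between the two distinct components $C_a,C_b$). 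Chordality then forces $uw$ to be an edge, a contradiction; hence $S$ is complete.

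With the lemma in hand I would assemble the induction. If $\Gamma$ is complete there is nothing to prove. Otherwise choose non-adjacent $a,b$ and a minimal $a$--$b$ separator $S$, which is a clique by the lemma. Write $\Gamma$ as the pasting, along the complete subgraph $S$, of the induced subgraph on $S$ together with the component of $\Gamma\smallsetminus S$ containing $a$, and the induced subgraph on $S$ together with all remaining components; because distinct components of $\Gamma\smallsetminus S$ share no edges, this is a genuine pasting in the sense of the definition. Both pieces are proper, induced, chordal, and have strictly fewer vertices than $\Gamma$, so the induction hypothesis expresses each as an iterated pasting of complete graphs along cliques, and one further pasting along $S$ yields $\Gamma$. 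I expect the separator-is-clique lemma to be the only genuinely delicate point; an alternative route would replace it by Dirac's theorem on the existence of a simplicial vertex, peeling off one closed neighbourhood (a clique) at a time, but the minimal-separator argument matches the statement of the proposition most directly.
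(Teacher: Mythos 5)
Your proof is correct. The paper in fact gives no proof of this proposition at all --- it simply cites it as a classical fact (Diestel, \emph{Graph Theory}, Prop.~5.5.1) --- and your argument (the pasting-stability check for the ``if'' direction, plus the key lemma that minimal vertex separators in a chordal graph are cliques, followed by peeling off one component of $\Gamma\smallsetminus S$ in the induction) is precisely the standard textbook proof of that cited result, so you have supplied in full, and correctly, the argument the paper takes for granted.
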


\begin{exam}\rm
 If $\Gamma$ is as in Example~\ref{exam:graph}, then $\Gamma$ is the pasting of $\Gamma_1$ and $\Gamma_2$ along the complete simplicial graph $\Delta$, and in turn $\Gamma_1$ is the pasting of $\Delta'$ and $\Delta$ along the subgraph with vertices $v_1,v_3$ and edge $\{v_1,v_3\}$, which is complete --- and analogously $\Gamma_2$.
On the other hand, $\Gamma$ is not of elementary type, as the induced subgraph of $\Gamma$ with vertices $v_2,v_3,v_4,v_5$ is the graph $\mathrm{L}_3$.
\end{exam}

Given a simplicial graph $\Gamma=(\calV,\calE)$, the associated pro-$p$ RAAG is the pro-$p$ group $G_\Gamma$ with presentation
\begin{equation}\label{eq:def RAAG}
  G_{\Gamma}=\left\langle\:v\in\calV\:\mid\:[v,w]=1\:\text{for all }\{v,w\}\in\calE\:\right\rangle.
\end{equation}
The next result summarizes some useful and well-known features of pro-$p$ RAAGs, which follow from the definition of a pro-$p$ RAAG \eqref{eq:def RAAG} (and from the recursive procedure to construct a chordal simplicial graph, cf. Proposition~\ref{prop:chordal}, in the case of item~(iv)).

\begin{prop}
 Let $\Gamma=(\calV,\calE)$ be a simplicial graph, and let $G_\Gamma$ be the associated pro-$p$ RAAG.
 \begin{itemize}
 {\item[(i)] The pro-$p$ group $G_\Gamma$ is torsion-free, and the abelianization $G_\Gamma/G_\Gamma'$}
 is isomorphic, as an abelian pro-$p$ group, to the free $\Z_p$-module generated by $\calV$ --- and thus also to $\Z_p^{\dd(\Gamma)}$.
 \item[(ii)] For every induced subgraph $\Gamma'=(\calV',\calE)$ of $\Gamma$, the inclusion $\calV'\hookrightarrow\calV$ induces an isomorphism of pro-$p$ groups from $G_{\Gamma'}$ to the subgroup of $G_\Gamma$ generated by $\calV'$.
  \item[(iii)] If $\Gamma_1,\ldots,\Gamma_r$ are the connected components of $\Gamma$, then 
  $G_\Gamma\simeq G_{\Gamma_1}\amalg\ldots\amalg G_{\Gamma_r}$ {\rm(}here $\amalg$ denotes the free pro-$p$ product of pro-$p$ groups, cf. \cite[\S~9.1]{ribzal:book}{\rm)}.
  \item[(iv)] If $\Gamma$ is chordal and connected, then $G_\Gamma$ can be constructed recursively via proper amalgamated free pro-$p$ products over free abelian subgroups, starting from free abelian pro-$p$ groups
  {\rm(}for the definition of proper amalgamated free pro-$p$ product see \cite[\S~9.2]{ribzal:book}{\rm)}.
 \end{itemize}
\end{prop}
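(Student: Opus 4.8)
The plan is to read off (i)--(iii) almost directly from the presentation \eqref{eq:def RAAG}, and to obtain (iv) by induction along the recursive construction of a chordal graph furnished by Proposition~\ref{prop:chordal}. For (i), abelianizing \eqref{eq:def RAAG} forces every pair of generators to commute and renders each defining relation $[v,w]=1$ redundant; hence $G_\Gamma/G_\Gamma'$ is the free abelian pro-$p$ group on $\calV$, that is $\Z_p^{\dd(\Gamma)}$. The only point requiring genuine input is torsion-freeness: I would deduce it from the fact that $G_\Gamma$ has finite cohomological dimension (equal to the clique number $\omega(\Gamma)$, as recorded in \S~\ref{ssec:raags cohom}), together with the standard fact that a nontrivial finite $p$-group has infinite $\cd$. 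Indeed, $\cd(G_\Gamma)<\infty$ then forbids $G_\Gamma$ from containing a copy of $\Z/p$, so all its finite subgroups are trivial and $G_\Gamma$ is torsion-free.

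For (ii), the inclusion $\calV'\hookrightarrow\calV$ induces a homomorphism $\iota\colon G_{\Gamma'}\to G_\Gamma$, well defined since $\calE'\subseteq\calE$. To see that it is injective I would exhibit a retraction $\rho\colon G_\Gamma\to G_{\Gamma'}$ fixing each $v\in\calV'$ and sending each $v\in\calV\smallsetminus\calV'$ to $1$; this $\rho$ respects the relations precisely because $\Gamma'$ is \emph{induced}, so that every edge $\{v,w\}\in\calE$ with $v,w\in\calV'$ already lies in $\calE'$ and hence $[v,w]=1$ holds in $G_{\Gamma'}$. Since $\rho\circ\iota=\id$, the map $\iota$ is a split injection onto the subgroup generated by $\calV'$. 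For (iii), when $\Gamma$ is the disjoint union of its connected components there are no edges between distinct components, so the presentation \eqref{eq:def RAAG} of $G_\Gamma$ is the concatenation of the presentations of the $G_{\Gamma_i}$ with no relations linking them; comparing universal properties (a pro-$p$ homomorphism out of $G_\Gamma$ is the same datum as a compatible family of homomorphisms out of the factors) identifies $G_\Gamma$ with the free pro-$p$ product $G_{\Gamma_1}\amalg\cdots\amalg G_{\Gamma_r}$.

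For (iv), I would induct using Proposition~\ref{prop:chordal}. The base case is a complete graph, where all generators commute and $G_\Gamma\isom\Z_p^{\dd(\Gamma)}$ is free abelian. For the inductive step, write $\Gamma$ as the pasting of induced subgraphs $\Gamma_1,\Gamma_2$ along a common clique $\Delta=\Gamma_1\cap\Gamma_2$; then $\calV=\calV_1\cup\calV_2$ and the edge set is $\calE_1\cup\calE_2$, so the presentation of $G_\Gamma$ coincides with that of the amalgamated free pro-$p$ product $G_{\Gamma_1}\amalg_{G_\Delta}G_{\Gamma_2}$, where $G_\Delta\isom\Z_p^{\dd(\Delta)}$ is free abelian. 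By (ii) the structure maps $G_\Delta\hookrightarrow G_{\Gamma_i}\hookrightarrow G_\Gamma$ are injective, which is exactly the condition guaranteeing that the amalgam is \emph{proper} in the sense of \cite[\S~9.2]{ribzal:book}; induction then exhibits $G_\Gamma$ as an iterated proper amalgamated free pro-$p$ product over free abelian subgroups, starting from free abelian pro-$p$ groups.

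I expect the main obstacle to be the torsion-freeness in (i): unlike the purely presentation-theoretic arguments for (ii)--(iv), it cannot be seen from \eqref{eq:def RAAG} directly and relies on the cohomological computation $\cd(G_\Gamma)=\omega(\Gamma)<\infty$. A secondary subtlety lies in (iv), namely in verifying that the amalgamated pro-$p$ product is genuinely proper rather than merely a pushout of pro-$p$ groups; here the split injections produced in (ii) supply exactly what is needed, but one must also invoke the structure theory of amalgamated pro-$p$ products from \cite{ribzal:book} to conclude that the factors and the amalgamating subgroup embed as stated.
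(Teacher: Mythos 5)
Your proposal is correct, and it follows essentially the route the paper itself indicates: the paper states this proposition without proof, remarking only that items (i)--(iii) follow from the presentation \eqref{eq:def RAAG} and item (iv) from the recursive construction of chordal graphs in Proposition~\ref{prop:chordal}, which is exactly the structure of your argument (retraction for (ii), universal property for (iii), induction with properness of the amalgam supplied by the split injections for (iv)). Your one genuinely added ingredient --- deducing torsion-freeness from $\cd_p(G_\Gamma)\leq\omega(\Gamma)<\infty$ via the Stanley--Reisner description of $\bfH^\bullet(G_\Gamma)$ in \S~\ref{ssec:raags cohom} and the fact that $\Z/p$ has infinite cohomological dimension --- is a standard and valid way to fill the one point that does not follow formally from the presentation alone.
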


{ 
\begin{exam}\label{ex:GGamma chordal}\rm
 Let $\Gamma$ be as in Example~\ref{exam:graph}, and let $G_\Gamma$ be the associated pro-$p$ RAAG.
 Then one has the decomposition as proper amalgamated free pro-$p$ product
\begin{equation}\label{eq:GGamma chordal 2}
 G_\Gamma= G_{\Gamma_1}\amalg_{G_\Delta} G_{\Gamma_2} 
\end{equation}
(here $\Gamma_1,\Gamma_2$ are as in Example~\ref{exam:graph}), if one considers $\Gamma$ as the patching of $\Gamma_1$ and $\Gamma_2$ along $\Delta$.
Equivalently,
 \begin{equation}\label{eq:GGamma chordal 1}
G_\Gamma = \underbrace{\left(G_{\Delta'}\amalg_{A_1} G_\Delta\right)}_{G_{\Gamma_1}}\amalg_{A_2}G_{\Delta''}  
 \end{equation}
(here $A_1$ and $A_2$ are the subgroups of $G_\Gamma$ generated by $v_1,v_3$ and $v_1,v_4$ respectively).
 Observe that $A_1\simeq A_2\simeq\Z_p^2$, while $G_{\Delta'}\simeq G_\Delta\simeq G_{\Delta''}\simeq\Z_p^3$.
\end{exam}}


\subsection{The cohomology of pro-$p$ RAAGs}\label{ssec:raags cohom}

Given a simplicial graph $\Gamma=(\mathcal V,\mathcal E)$, with  $\calV=\{v_1,\ldots,v_d\}$, let $V$ denote the $\Z/p$-vector space generated by $\calV$.
One defines the {\sl Stanley-Reisner $\Z/p$-algebra} $\mathbf{\Lambda}_\bullet(\Gamma)$ associated to $\Gamma$ as the quotient 
\[
 \bfLam_\bullet(\Gamma)=\coprod_{n\geq0}\Lambda_n(\Gamma)=\frac{\bfLam_{\bullet}(V)}{\left(\:v\wedge w\:\mid \:\{v,w\}\notin\calE\:\right)}
\]
of the exterior algebra $\mathbf{\Lambda}_\bullet(V)=\coprod_{n\geq0}\Lambda_n(V)$ generated by $V$ over the ideal generated by the wedge products of disjoint vertices.
The algebra $\bfLam_\bullet(\Gamma)$ inherits the grading from the exterior algebra $\bfLam_\bullet(V)$, and hence it is a non-negatively graded connected algebra of finite type (i.e., $\Lambda_0(\Gamma)=\Z/p$ and $\dim(\Lambda_n(\Gamma))<\infty$ for every $n\geq0$).
In particular, $\bfLam_\bullet(\Gamma)$ is a {\sl quadratic algebra}, as $v\wedge w\in\Lambda_2(V)$ for every $v,w\in\calV$ (for the definition of quadratic algebra see, e.g., \cite[\S~1]{qsv:quadratic}).

{For $v_{i_1},\ldots, v_{i_n}\in\calV$, let $v_{i_1}\cdots v_{i_n}$ denote the image of $v_{i_1}\wedge\cdots\wedge v_{i_n}\in\Lambda_n(V)$ in $\Lambda_n(\Gamma)$.
The kernel of the epimorphism $\psi_n\colon\Lambda_n(V)\twoheadrightarrow\Lambda_n(\Gamma)$ has a basis
\[
 \left\{\:v_{i_1}\wedge\cdots\wedge v_{i_n} \:\mid\:1\leq i_1<\ldots<i_n\leq d,
 \:\left\{v_{i_s},v_{i_t}\right\}\notin\calE\text{ for some } s<t\:\right\}.
\]
On the other hand, one has $\psi_n(v_{i_1}\wedge\cdots\wedge v_{i_n})\neq0$ if, and only if, $\{v_{i_s},v_{i_t}\}\in\calE$ for every $1\leq s<t\leq n$ --- namely, if, and only if, there exists an $n$-clique $\Delta$ of $\Gamma$ such that $\calV(\Delta)=\{v_{i_1},\ldots,v_{i_n}\}$. 
Hence, for each positive degree $n$ the $\Z/p$-vector subspace $\Lambda_n(\Gamma)$ comes endowed with a basis 
\[ \left\{\:v_{i_1}\cdots v_{i_n}\:\mid\:1\leq i_1<\ldots<i_n\leq d\text{ and }\{\:v_{i_1},\:\ldots,\:v_{i_n}\:\}=\calV(\Delta)\:\right\}
\]
where $\Delta$ runs thorugh all $n$-cliques of $\Gamma$. }

Now let $G_\Gamma$ be the pro-$p$ RAAG associated to $\Gamma$.
By \eqref{eq:H1}, $\rmH^1(G_\Gamma,\Z/p)$ has a basis $\calV^\ast=\{\chi_1,\ldots,\chi_d\}$ dual to $\calV$.
Let $\Gamma^\ast=(\calV^\ast,\calE(\Gamma^\ast))$ be the simplicial graph with $\calE(\Gamma^\ast)=\{\{\chi_i,\chi_j\}\:\mid\:\{v_i,v_j\}\in\calE\}$.
Then
\[
 \bfH^\bullet(G_\Gamma)=\bfLam_\bullet(\Gamma^\ast)
\]
(cf., e.g., \cite[\S~3.2]{papa:raags} and \cite[Thm.~5.1]{AC:RAAGs EK}).
In particular, the $\Z/p$-cohomology algebra of a pro-$p$ RAAG is quadratic.
We will use $${ \mathcal{B}_{\Gamma^\ast}^2}:=\left\{\:\chi_i\smallsmile\chi_j\:\mid\:1\leq i<j\leq d\text{ and }\{v_i,v_j\}\in\calE\:\right\}$$
as the canonical basis of $\Lambda_2(\Gamma^\ast)=\rmH^2(G_\Gamma,\Z/p)$.


\subsection{Pro-$p$ RAAGs and maximal pro-$p$ Galois group}\label{ssec:Gal}

The following notions were introduced respectively in \cite{cq:bk} and in \cite{eq:kummer,qw:cyc}.

\begin{defin}\label{defin:BK 1cyc}\rm
Let $G$ be a pro-$p$ group.
 \begin{itemize}
  \item[(i)] $G$ is said to be a {\sl Bloch-Kato pro-$p$ group} if for every subgroup $H\subseteq G$, the $\Z/p$-cohomology algebra $\bfH^\bullet(H)$ is a quadratic $\Z/p$-algebra.
  \item[(ii)] $G$ is said to be {\sl 1-cyclotomic} if there exists a continuous $G$-module $M$, isomorphic to $\Z_p$ as an abelian pro-$p$ group, such that for every subgroup $H\subseteq G$ and for every positive integer $n$ the natural map \[
\rmH^1(H,M/p^nM)\longrightarrow \rmH^1(H,M/pM),
\]
induced by the epimorphism of continuous $G$-modules $M/p^nM\twoheadrightarrow M/pM$, is surjective.
 \end{itemize}
\end{defin}

\begin{rem}\rm Let $G$ be a pro-$p$ group.
\begin{itemize}
  \item[(a)] If $G$ is Bloch-Kato, then one has that $\rmH^2(H,\Z/p)=\rmH^1(H,\Z/p)\smallsmile\rmH^1(H,\Z/p)$ for every subgroup $H$ of $G$.
  \item[(b)] If $G$ is 1-cyclotomic and the action on the associated $G$-module $M$ is trivial, then $G$ is {\sl absolutely torsion-free}, i.e., $H/H'$ is a free abelian pro-$p$ group for every subgroup $H$ of $G$ (cf. \cite[Rem.~2.3]{cq:Galfeat}).
  Absolutely torsion-free pro-$p$ groups were introduced by T.~W\"urfel in \cite{wurf}.
 \end{itemize}
\end{rem}

If $\K$ is a field containing a root of 1 of order $p$, then the maximal pro-$p$ Galois group $G_{\K}(p)$ is both Bloch-Kato and 1-cyclotomic, respectively by the Norm Residue Theorem and by Kummer theory (cf. \cite[\S~4]{eq:kummer} and \cite[Thm.~1.1]{qw:cyc}).
In fact, by an earlier result of {A.S.~Merkur\cprime ev} and A.A.~Suslin --- which is the ``degree 2 version'' of the Norm Residue Theorem ---, one knows that 
$$\rmH^2(G_{\K}(p),\Z/p)=\rmH^1(G_{\K}(p),\Z/p)\smallsmile\rmH^1(G_{\K}(p),\Z/p)$$
(cf. \cite{mersus}, see also \cite[Thm.~6.4.4]{nsw:cohn}).
Therefore, a pro-$p$ group $G$ which is not Bloch-Kato (or just with elements in $\rmH^2(G,\Z/p)$ which do not arise from cup-products), or which is not 1-cyclotomic, can not occur as the maximal pro-$p$ Galois group of a field containing a root of 1 of order $p$ --- and hence as an absolute Galois group.
These properties were employed by I.~Snopce and P.A.~Zalesski{\u{\i}} to determine which pro-$p$ RAAGs occur as maximal pro-$p$ Galois groups (cf. \cite[Thm.~1.2 and Thm.~1.5]{sz:raags}).

\begin{thm}\label{thm:ilirpavel}
 Let $\Gamma$ be a simplicial graph, and let $G_{\Gamma}$ be the associated pro-$p$ RAAG.
 The following are equivalent:
 \begin{itemize}
  \item[(i)] $\Gamma$ is of elementary type; 
  \item[(ii)] $G_\Gamma$ occurs as the maximal pro-$p$ Galois group of a field $\K$ containing a root of 1 of order $p$;
  \item[(ii\cprime)] $G_\Gamma$ occurs as the maximal pro-$p$ Galois group of a field $\K$ containing all roots of 1 of $p$-power order;
  \item[(iii)] $G_\Gamma$ is a Bloch-Kato pro-$p$ group;
  \item[(iii\cprime)] $\rmH^2(H,\Z/p)=\rmH^1(H,\Z/p)\smallsmile\rmH^1(H,\Z/p)$ for every subgroup $H$ of $G_\Gamma$;
  \item[(iv)] $G_\Gamma$ is 1-cyclotomic;
  \item[(iv\cprime)] $G_\Gamma$ is absolutely torsion-free.
  \item[(v)] Every finitely generated subgroup of $G_\Gamma$ is again a pro-$p$ RAAG.
 \end{itemize}
\end{thm}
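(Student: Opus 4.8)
The plan is to prove the eight conditions equivalent by arranging them in a web of implications anchored at condition~(i), reserving the real work for two contrapositive arguments. Several arrows are immediate from results quoted above: $(\mathrm{ii}')\Rightarrow(\mathrm{ii})$ holds because a field containing all roots of $1$ of $p$-power order contains one of order $p$; $(\mathrm{ii})\Rightarrow(\mathrm{iii})$ and $(\mathrm{ii})\Rightarrow(\mathrm{iv})$ are exactly the statements, recalled before the theorem, that $G_{\K}(p)$ is Bloch--Kato (Norm Residue Theorem) and $1$-cyclotomic (Kummer theory); $(\mathrm{iii})\Rightarrow(\mathrm{iii}')$ and $(\mathrm{iv})\Rightarrow(\mathrm{iv}')$ follow from parts (a) and (b) of the Remark after Definition~\ref{defin:BK 1cyc}, the latter after observing that the cyclotomic action on $\Z_p$ is trivial, which is automatic since $G_\Gamma$ is pro-$p$ with torsion-free abelianization. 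Thus everything descends from (i), and it remains to close the circle by proving $(\mathrm{iii}')\Rightarrow(\mathrm{i})$, $(\mathrm{iv}')\Rightarrow(\mathrm{i})$, together with $(\mathrm{i})\Rightarrow(\mathrm{ii}')$ and $(\mathrm{i})\Leftrightarrow(\mathrm{v})$.

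The conceptual pivot is the combinatorial definition of elementary type: $\Gamma$ fails to be of elementary type precisely when it admits an induced copy of $\mathrm{C}_4$ or of $\mathrm{L}_3$. I would exploit that the conditions (iii), (iii'), (iv') are hereditary — each is a statement about \emph{every} subgroup of $G_\Gamma$ — so that, via the identification of $G_{\Gamma'}$ with the subgroup generated by $\calV'$ for an induced subgraph $\Gamma'$ (the structural properties of pro-$p$ RAAGs recalled above), the failure of any of them for $G_{\mathrm{C}_4}$ or $G_{\mathrm{L}_3}$ propagates to every $G_\Gamma$ containing the corresponding induced subgraph. This reduces both contrapositives $\neg(\mathrm{i})\Rightarrow\neg(\mathrm{iii}')$ and $\neg(\mathrm{i})\Rightarrow\neg(\mathrm{iv}')$ to the two minimal obstruction graphs.

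The heart of the argument — and the step I expect to be the main obstacle — is the explicit analysis of these two RAAGs. Here $G_{\mathrm{C}_4}\cong F_2\times F_2$, a direct product of two free pro-$p$ groups of rank $2$, and I would produce inside it the pro-$p$ analogue of a Bieri--Stallings group, namely the kernel $N$ of the homomorphism sending all four generators to a topological generator of $\Z_p$. The claim is that $N$ is finitely generated, so $\rmH^1(N,\Z/p)$ is finite-dimensional, whereas $N$ is not finitely presented and $\rmH^2(N,\Z/p)$ is infinite-dimensional; since $\rmH^1(N,\Z/p)\smallsmile\rmH^1(N,\Z/p)$ is then finite-dimensional, this already violates (iii') (hence (iii)), and exhibiting a subgroup of $G_{\mathrm{C}_4}$ with torsion in its abelianization violates (iv') (hence (iv)). A parallel, slightly more delicate computation using the amalgamated decomposition of $G_{\mathrm{L}_3}$ handles the path graph. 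These cohomological finiteness computations for non-induced subgroups are exactly where the content lies: the quadraticity of $\bfH^\bullet(G_\Gamma)$ itself (the Stanley--Reisner description) is never enough, precisely because the obstructions live in subgroups not generated by vertices.

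Finally, for $(\mathrm{i})\Rightarrow(\mathrm{ii}')$ I would use that an elementary-type graph builds $G_\Gamma$ from copies of $\Z_p$ by iterating disjoint unions (free pro-$p$ products) and conings (direct products with $\Z_p$), so that $G_\Gamma$ is an elementary-type pro-$p$ group in the sense of Efrat; the (non-conjectural) realizability direction then yields a field containing all roots of $1$ of $p$-power order with this maximal pro-$p$ Galois group, realizing $\Z_p$ over a suitable local base and passing to free products and Laurent-series extensions at each construction step. For $(\mathrm{i})\Leftrightarrow(\mathrm{v})$ I would transcribe Droms' theorem to the pro-$p$ setting: $(\mathrm{i})\Rightarrow(\mathrm{v})$ by induction along the elementary-type construction using the Kurosh subgroup theorem for free and amalgamated pro-$p$ products, while $\neg(\mathrm{i})\Rightarrow\neg(\mathrm{v})$ is witnessed once more by the Bieri--Stallings subgroup, which is finitely generated but not a pro-$p$ RAAG. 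Collecting these arrows closes the equivalence.
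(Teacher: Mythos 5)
First, a calibration: the paper does not prove this theorem at all. It is stated as a quoted result of Snopce and Zalesski{\u{\i}} (cited as Theorems~1.2 and 1.5 of \cite{sz:raags}), and the surrounding text only recalls the notions entering its statement. So your proposal can only be measured against what a genuine proof requires, and on that score it has the right overall shape --- the cheap arrows out of (ii), the hereditary nature of (iii), (iii$'$), (iv$'$), the reduction of the contrapositives to the two minimal forbidden graphs $\mathrm{C}_4$ and $\mathrm{L}_3$ via the induced-subgraph--subgroup correspondence, Efrat-style realizability for (i)$\Rightarrow$(ii$'$), and a pro-$p$ Droms argument for (v) --- but all of the mathematical content sits in steps you assert rather than prove, and for $\mathrm{L}_3$ the method you propose provably cannot succeed.

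Here is the concrete failure. Your obstruction for both graphs is homological finiteness: a finitely generated subgroup (a pro-$p$ Bieri--Stallings kernel) with infinite-dimensional $\rmH^2$, hence violating (iii$'$) and not a RAAG. For $\mathrm{C}_4$, where $G_{\mathrm{C}_4}\simeq F_2\times F_2$, this is at least plausible, though you still owe the pro-$p$ proofs that the kernel of $F_2\times F_2\to\Z_p$ is topologically finitely generated and that its $\rmH^2$ is infinite-dimensional; neither is a formal transcription of Stallings' discrete computation (and note this kernel has torsion-free abelianization, so it does not witness the failure of (iv$'$); for that you need a different subgroup, e.g. a fiber product over a finite $p$-group with nontrivial Schur multiplier). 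For $\mathrm{L}_3$ it is impossible: $\mathrm{L}_3$ is chordal, and by the coherence theorem of Snopce--Zalesski{\u{\i}} quoted in Remark~\ref{rem:chordalgraphs} of this very paper (\cite[Thm.~1.6]{sz:raags}), $G_{\mathrm{L}_3}$ is coherent, i.e. every finitely generated subgroup is finitely presented and so has finite-dimensional $\rmH^2$. There is no ``parallel, slightly more delicate computation'' of Bieri--Stallings type for the path; the failure of (iii$'$), (iv$'$) and (v) for $G_{\mathrm{L}_3}$ must be detected by an obstruction of a completely different nature (torsion in the abelianization of a suitable finitely generated --- indeed finitely presented --- subgroup, i.e. failure of the Kummerian/1-cyclotomic property), and manufacturing that subgroup is precisely the hard technical core of the cited paper. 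Two further gaps: your justification of (iv)$\Rightarrow$(iv$'$), namely that the cyclotomic action is automatically trivial because $G_\Gamma$ is pro-$p$ with torsion-free abelianization, is false as stated --- any nontrivial pro-$p$ group with torsion-free abelianization admits plenty of nontrivial continuous homomorphisms to $1+p\Z_p\subseteq\Z_p^\times$, so triviality of the orientation must be argued, not assumed. And in (v), ``finitely generated but not a pro-$p$ RAAG'' needs a reason: for a subgroup with torsion in its abelianization this is immediate (RAAG abelianizations are free), but for your Bieri--Stallings kernel it again requires the finite-presentation obstruction, which is unavailable inside $G_{\mathrm{L}_3}$.
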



\section{Massey products}\label{sec:massey}

\subsection{Massey products in Galois cohomology}\label{ssec:massey Gal}

Let $G$ be a pro-$p$ group.
For $n\geq2$, the {\sl $n$-fold Massey product} on $\rmH^1(G,\Z/p)$ is a multi-valued map
\[
 \underbrace{\rmH^1(G,\Z/p)\times \ldots\times \rmH^1(G,\Z/p)}_{n\text{ times}}\longrightarrow \rmH^2(G,\Z/p).
\]
Given an $n$-tuple (with $n\geq2$) $\alpha_1,\ldots,\alpha_n$ of elements of $\rmH^1(G,\Z/p)$ (with possibly $\alpha_i=\alpha_j$ for some $1\leq i<j\leq n$), the (possibly empty) subset of $\rmH^2(G,,\Z/p)$ which is the value of the $n$-fold Massey product associated to the $n$-tuple $\alpha_1,\ldots,\alpha_n$ is denoted by $\langle\alpha_1,\ldots,\alpha_n\rangle$.
If $n=2$, then the 2-fold Massey product coincides with the cup-product, i.e., for $\alpha_1,\alpha_2\in \rmH^1(G,\Z/p)$ one has 
\begin{equation}\label{eq:cup 2massey}
 \langle\alpha_1,\alpha_2\rangle=\{\alpha\smallsmile\alpha_2\}\subseteq \rmH^2(G,\Z/p).
\end{equation}
For further details on this operation in the profinite and Galois-theoretic context, we direct the reader to \cites{vogel,ido:massey,mt:massey}. 
In particular, the definition of $n$-fold Massey products in the $\Z/p$-cohomology of pro-$p$ groups may be found in \cite[Def.~2.1]{mt:massey}.
For the purposes of our investigation, the properties described below --- and, in particular, the group-theoretic characterizations given by Proposition~\ref{prop:masse unip} --- will be enough.

Given an $n$-tuple $\alpha_1,\ldots,\alpha_n$ of elements of $\rmH^1(G,\Z/p)$, $n\geq2$, the Massey product $\langle\alpha_1,\ldots,\alpha_n\rangle$ is said:
\begin{itemize}
 \item[(a)] to be {\sl defined}, if $\langle\alpha_1,\ldots,\alpha_n\rangle\neq\varnothing$;
 \item[(b)] to {\sl vanish}, if $0\in\langle\alpha_1,\ldots,\alpha_n\rangle$.
\end{itemize}
Moreover, the pro-$p$ group $G$ is said to satisfy the {\sl $n$-Massey vanishing property} (with respect to $\Z/p$) if every defined $n$-fold Massey product in $\bfH^\bullet(G)$ vanishes.

In the following proposition we collect some properties of Massey products (cf., e.g., \cite[\S~1.2]{vogel} and  \cite[\S~2]{mt:massey}).

\begin{prop}\label{prop:massey cup}
 Let $G$ be a pro-$p$ group and let $\alpha_1,\ldots,\alpha_n$ be an $n$-tuple of elements of $\rmH^1(G,\Z/p)$, with $n\geq3$.
 Suppose that the $n$-fold massey product $\langle\alpha_1,\ldots,\alpha_n\rangle$ is defined.
 \begin{itemize}
 \item[(i)] For every $a\in\Z/p$ and $i\in\{1,\ldots,n\}$ one has 
 \[
 \varnothing\neq a\cdot \langle\alpha_1,\ldots,\alpha_n\rangle\subseteq \langle\alpha_1,\ldots,a\alpha_i,\ldots,\alpha_n\rangle.
 \] 
 In particular, if $\alpha_i=0$ for some $i$, then $0\in\langle\alpha_1,\ldots,\alpha_n\rangle$.
\item[(ii)] For all $i=1,\ldots,n-1$ one has $\alpha_i\smallsmile\alpha_{i+1}=0$.
\item[(iii)] The set $\langle\alpha_1,\ldots,\alpha_n\rangle$ is closed under adding $\alpha_1\smallsmile\alpha'$ and $\alpha_n\smallsmile\alpha'$ for any $\alpha'\in\rmH^1(G,\Z/p)$.
 \end{itemize}
 \end{prop}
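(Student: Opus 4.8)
The plan is to argue entirely through the formalism of \emph{defining systems}, following \cite[Def.~2.1]{mt:massey}. Recall that a value of $\langle\alpha_1,\dots,\alpha_n\rangle$ is produced by a collection of $1$-cochains $\{a_{ij}\}$ indexed by $1\le i<j\le n+1$ with $(i,j)\ne(1,n+1)$, where each $a_{i,i+1}$ is a cocycle representing $\alpha_i$ and
\[
\dd a_{ij}=-\sum_{i<k<j}a_{ik}\smallsmile a_{kj};
\]
the associated value is the class of the $2$-cocycle $\sum_{1<k<n+1}a_{1k}\smallsmile a_{k,n+1}$, and $\langle\alpha_1,\dots,\alpha_n\rangle$ is the set of all such classes as the defining system varies (the argument below is insensitive to the chosen sign convention). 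I would first dispose of (ii): for $n\ge3$ the cochain $a_{i,i+2}$ is present for every $1\le i\le n-1$, and its defining relation reads $\dd a_{i,i+2}=-a_{i,i+1}\smallsmile a_{i+1,i+2}$. Since the right-hand side is a cocycle representing $-\alpha_i\smallsmile\alpha_{i+1}$ and is at the same time a coboundary, $\alpha_i\smallsmile\alpha_{i+1}=0$ in $\rmH^2(G,\Z/p)$.

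For (i) I would start from a defining system $\{a_{jk}\}$ for $\langle\alpha_1,\dots,\alpha_n\rangle$ and rescale it: fixing $i$ and $a\in\Z/p$, set $a'_{jk}=a\cdot a_{jk}$ whenever $j\le i<k$ and $a'_{jk}=a_{jk}$ otherwise. A short case analysis on the position of $i$ relative to a pair $j<l<k$ shows that each product $a'_{jl}\smallsmile a'_{lk}$ acquires the scalar $a$ exactly when $a'_{jk}$ does, so $\{a'_{jk}\}$ is again a defining system; its $(j,j+1)$-entries represent $\alpha_j$ for $j\ne i$ and $a\alpha_i$ for $j=i$, hence it is a system for $\langle\alpha_1,\dots,a\alpha_i,\dots,\alpha_n\rangle$. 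The same check applied to the terms $a'_{1l}\smallsmile a'_{l,n+1}$ shows that its value is $a$ times the value of $\{a_{jk}\}$, giving $\varnothing\ne a\cdot\langle\alpha_1,\dots,\alpha_n\rangle\subseteq\langle\alpha_1,\dots,a\alpha_i,\dots,\alpha_n\rangle$, with nonemptiness inherited from the hypothesis that the product is defined. Taking $a=0$, and noting that $\alpha_i=0$ makes the two tuples coincide, yields the ``in particular'' clause.

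For (iii) I would perturb a single interior cochain of a given defining system $\{a_{ij}\}$ of value $v$. Choosing a cocycle $c$ representing $\alpha'$, replace $a_{2,n+1}$ by $a_{2,n+1}+c$ and leave every other cochain unchanged; since $\dd c=0$, and $a_{2,n+1}$ enters the right-hand side only of the excluded relation at $(1,n+1)$, the collection remains a defining system, and its value becomes $v+[a_{1,2}\smallsmile c]=v+\alpha_1\smallsmile\alpha'$. Symmetrically, replacing $a_{1,n}$ by $a_{1,n}+c$ changes the value to $v+[c\smallsmile a_{n,n+1}]=v+\alpha'\smallsmile\alpha_n$; letting $\alpha'$ range over all of $\rmH^1(G,\Z/p)$ and invoking graded-commutativity of the cup product of degree-$1$ classes then gives closure under adding $\alpha_n\smallsmile\alpha'$ as well. (Here $n\ge3$ is exactly what guarantees that $(1,n)$ and $(2,n+1)$ are genuine interior indices, distinct both from the diagonal entries and from the excluded $(1,n+1)$.)

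The routine part is verifying that the modified cochain collections still satisfy every defining relation; the one place demanding care is the scalar bookkeeping in (i), where one must confirm that the factor $a$ appears with the correct total multiplicity across all products $a'_{jl}\smallsmile a'_{lk}$, both in the defining relations and in the final value. This is the main obstacle, and it is organizational rather than conceptual: the case analysis must be laid out cleanly so that each product is checked to inherit $a$ precisely once whenever the cochain it computes does.
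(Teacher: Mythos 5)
Your proof is correct: the paper itself states this proposition without proof (deferring to Vogel, \S~1.2, and Mina\v{c}--T\^an, \S~2), and your defining-systems argument --- the relation $\dd a_{i,i+2}=-a_{i,i+1}\smallsmile a_{i+1,i+2}$ for (ii), rescaling the cochains $a_{jk}$ with $j\le i<k$ for (i), and perturbing $a_{2,n+1}$ (resp. $a_{1,n}$) by a cocycle representing $\alpha'$ for (iii) --- is exactly the standard argument found in those references. The index bookkeeping is sound, including the two places where $n\geq 3$ is genuinely needed (so that $(i,i+2)\neq(1,n+1)$ in (ii), and so that $(2,n+1)$ and $(1,n)$ are neither diagonal entries nor the excluded index in (iii)).
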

 
 Let $\K$ be a field containing a root of 1 of order $p$. 
 In \cite[Conj.~1.1]{mt:conj}, J.~Mina\v{c} and N.D.~T\^an conjectured that the maximal pro-$p$ Galois group $G_{\K}(p)$ has the $n$-Massey vanishing property with respect to $\Z/p$, for every $n\geq3$.
 This conjecture has been proved in the following cases:
\begin{itemize}
 \item[(a)] if $n=3$, by E.~Matzri in the preprint \cite{eli:massey} (replaced by the paper by I.~Efrat and E.~Matzri \cite{EM:massey});
 \item[(b)] for every $n\geq3$ if $\K$ is a local field, by J.~Mina\v{c} and N.D.~T\^an (cf. \cite[Thm.~7.1]{mt:docu}) --- in fact, in this case $G_{\K}(p)$ has the {\sl strong} $n$-Massey vanishing property for every $n\geq3$ (cf. \cite[Prop.~4.1]{JT:U4}). 
 \item[(c)] for every $n\geq3$ if $\K$ is a number field, by J.~Harpaz and O.~Wittenberg (cf. \cite{HW:massey}).
\end{itemize}

In \cite[\S~7]{mt:massey}, one may find some examples of pro-$p$ groups with defined and non-vanishing 3-fold Massey products, and hence which do not occur as maximal pro-$p$ Galois groups of fields containing a root of 1 of order $p$.

\begin{exam}\label{ex:MT}\rm
Let $G$ be the pro-$p$ group with minimal presentation
\[
 G=\langle \:x_1,\ldots,x_5\:\mid\: [[x_1,x_2],x_3][x_4,x_5]\:\rangle.
\]
Then there is a $3$-fold Massey product in $\bfH^\bullet(G)$ which is defined but does not vanishes (cf. \cite[Ex.~7.2]{mt:massey}).
Th.S.~Weigel and the third-named author\footnote{During the problem session of the conference ``New Trends Around Profinite Groups'' (Sept. 2021), the third-named author promised a bottle of Franciacorta wine to anyone who will prove this.} suspect that $G$ is not a Bloch-Kato pro-$p$ group, and $G$ is not 1-cyclotomic (cf. \cite[Rem.~3.7]{qw:cyc}).
\end{exam}

\begin{rem}\label{rem:cupdef}\rm
 Following \cite[Def.~4.5]{JT:U4}, one says that a pro-$p$ group $G$ has the {\sl cup-defining $n$-fold Massey product property}, with $n\geq3$, if the $n$-fold Massey product $\langle\alpha_1,\ldots,\alpha_n\rangle$, associated to an $n$-tuple $\alpha_1,\ldots,\alpha_n$ of elements of $\rmH^1(G,\Z/p)$, is defined whenever
 \[
  \alpha_1\smallsmile\alpha_2=\alpha_2\smallsmile\alpha_3=\ldots=\alpha_{n-1}\smallsmile\alpha_n=0.\]
Thus, $G$ has the strong $n$-fold Massey vanishing property if, and only if, it has both the $n$-fold Massey vanishing property and the cup-defining $n$-fold Massey product property.
Moreover, if $G$ has the cup-defining $n$-fold Massey product property, then $G$ has the vanishing $(n-1)$-fold Massey vanishing property, as observed in \cite[Rem.~4.6]{JT:U4}.
Therefore, $G$ has the strong $n$-fold Massey vanishing property for every $n\geq3$ if, and only if, it has the cup-defining $n$-fold Massey product property for every $n\geq3$.
In \cite[Question~4.2]{JT:U4} it is asked whether the maximal pro-$p$ Galois group of a field containing a root of 1 of order $p$ has the the strong $n$-fold Massey vanishing property for every $n\geq3$.
\end{rem}


\subsection{Massey products and unipotent representations}\label{ssec:unipotent}

Massey products for a pro-$p$ group $G$ may be translated in terms of unipotent upper-triangular representations of $G$ as follows.
For $n\geq 2$ let
\[
 \dbU_{n+1}=\left\{\left(\begin{array}{ccccc} 1 & a_{1,2} & \cdots & & a_{1,n+1} \\ & 1 & a_{2,3} &  \cdots & \\
 &&\ddots &\ddots& \vdots \\ &&&1& a_{n,n+1} \\ &&&&1 \end{array}\right)\mid a_{i,j}\in\Z/p \right\}\subseteq 
 \mathrm{GL}_{n+1}(\Z/p)
\]
be the group of unipotent upper-triangular $(n+1)\times(n+1)$-matrices over $\Z/p$.
Then $\dbU_{n+1}$ is a finite $p$-group.
Moreover, let $I_{n+1}$ and $E_{i,j}$ denote respectively the identity $(n+1)\times(n+1)$-matrix and the $(n+1)\times(n+1)$-matrix with 1 at entry $(i,j)$ and 0 elsewhere, for $1\leq i<j\leq n+1$.
The center of $\dbU_{n+1}$ is the subgroup
  $$\mathrm{Z}(\dbU_{n+1})=I_{n+1}+\Z/p\cdot E_{1,n+1}=\left\{\:I_{n+1}+a\cdot E_{1,n+1}\:\mid\: a\in\Z/p\:\right\}$$
(cf., e.g., \cite[\S~3]{mt:massey} or \cite[p.~308]{eq:kummer}).
Set $\bar\dbU_{n+1}=\dbU_{n+1}/\mathrm{Z}(\dbU_{n+1})$.
For a homomorphism of pro-$p$ groups $\rho\colon G\to\dbU_{n+1}$, respectively $\bar \rho\colon G\to\bar\dbU_{n+1}$, and for $1\leq i\leq n$, let $\rho_{i,i+1}$, resp. $\bar\rho_{i,i+1}$, denote the projection of $\rho$, resp. $\bar\rho$, on the $(i,i+1)$-entry.
Observe that $$\rho_{i,i+1}\colon G\longrightarrow\Z/p\qquad \text{and}\qquad \bar\rho_{i,i+1}\colon G\longrightarrow\Z/p$$ are homomorphisms of pro-$p$ groups, and thus we may consider $\rho_{i,i+1}$ and $\bar \rho_{i,i+1}$ as elements of $\rmH^1(G,\Z/p)$.
One has the following ``pro-$p$ translation'' of a result of W.~Dwyer which interpretes Massey product in terms of unipotent upper-triangular represetations (cf., e.g., \cite[Lemma~9.3]{eq:kummer}).

\begin{prop}\label{prop:masse unip}
Let $G$ be a pro-$p$ group and let $\alpha_1,\ldots,\alpha_n$ be an $n$-tuple of elements of $\rmH^1(G,\Z/p)$, with $n\geq2$.
\begin{itemize}
 \item[(i)] The $n$-fold Massey product $\langle\alpha_1,\ldots,\alpha_n\rangle$ is defined if and only if there exists a continuous homomorphism $\bar \rho\colon G\to\bar\dbU_{n+1}$ such that $\bar\rho_{i,i+1}=\alpha_i$ for every $i=1,\ldots,n$.
 \item[(ii)] The $n$-fold Massey product $\langle\alpha_1,\ldots,\alpha_n\rangle$ vanishes if and only if there exists a continuous homomorphism $ \rho\colon G\to\dbU_{n+1}$ such that $\rho_{i,i+1}=\alpha_i$ for every $i=1,\ldots,n$.
\end{itemize}
\end{prop}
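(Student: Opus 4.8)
The plan is to translate the homomorphism condition for a map into $\dbU_{n+1}$ into the system of equations defining a Massey product, following Dwyer's original argument. First I would write a continuous map $\rho\colon G\to\dbU_{n+1}$ as $\rho(g)=I_{n+1}+\sum_{1\leq i<j\leq n+1}\rho_{ij}(g)E_{i,j}$, with the convention $\rho_{ii}=1$, where each $\rho_{ij}\colon G\to\Z/p$ is a continuous function. Comparing the $(i,j)$-entry of $\rho(gh)$ with that of $\rho(g)\rho(h)$ shows that $\rho$ is a homomorphism precisely when, for every $1\leq i<j\leq n+1$,
\begin{equation*}
 \rho_{ij}(gh)=\rho_{ij}(g)+\rho_{ij}(h)+\sum_{i<k<j}\rho_{ik}(g)\rho_{kj}(h),
\end{equation*}
equivalently (in terms of inhomogeneous continuous cochains) $\delta\rho_{ij}=-\sum_{i<k<j}\rho_{ik}\smallsmile\rho_{kj}$. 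In particular each superdiagonal entry $\rho_{i,i+1}$ is a cocycle, hence an element of $\rmH^1(G,\Z/p)$, and the constraint $\rho_{i,i+1}=\alpha_i$ pins these down. These are exactly the equations governing a defining system for $\langle\alpha_1,\ldots,\alpha_n\rangle$ in the sense of \cite[Def.~2.1]{mt:massey}; I would check that the signs match the normalization there (any discrepancy is absorbed into the $\rho_{ij}$), so that a defining system for $\langle\alpha_1,\ldots,\alpha_n\rangle$ refining $\alpha_1,\ldots,\alpha_n$ is the same datum as a family of entries $\{\rho_{ij}\}$ satisfying these relations.

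For part (i) I would use that $\mathrm{Z}(\dbU_{n+1})=I_{n+1}+\Z/p\cdot E_{1,n+1}$, so the coordinates of $\bar\dbU_{n+1}$ are precisely the entries $\rho_{ij}$ with $(i,j)\neq(1,n+1)$, the quotient map forgetting the top-right entry. Thus a continuous homomorphism $\bar\rho\colon G\to\bar\dbU_{n+1}$ is exactly a family $\{\rho_{ij}\}_{(i,j)\neq(1,n+1)}$ satisfying all of the equations above except the one for $(1,n+1)$ — that is, a defining system — and the condition $\bar\rho_{i,i+1}=\alpha_i$ says it refines the tuple $\alpha_1,\ldots,\alpha_n$. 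Hence such a $\bar\rho$ exists if and only if a defining system exists, i.e. if and only if $\langle\alpha_1,\ldots,\alpha_n\rangle\neq\varnothing$.

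For part (ii) I would compare with the central extension $\{1\}\to\mathrm{Z}(\dbU_{n+1})\to\dbU_{n+1}\to\bar\dbU_{n+1}\to\{1\}$. Given a defining system $\bar\rho$, the one remaining equation, for the top-right corner, reads $\delta\rho_{1,n+1}=-\sum_{1<k<n+1}\rho_{1k}\smallsmile\rho_{k,n+1}$; its right-hand side is a $2$-cocycle whose class equals (up to the chosen sign) the value of $\langle\alpha_1,\ldots,\alpha_n\rangle$ attached to $\bar\rho$, equivalently the pullback along $\bar\rho$ of the characteristic class of the extension. A continuous $\rho\colon G\to\dbU_{n+1}$ lifting $\bar\rho$ with $\rho_{i,i+1}=\alpha_i$ is exactly a continuous solution $\rho_{1,n+1}$ of this equation, which exists if and only if that cocycle is a coboundary, i.e. if and only if the corresponding Massey value is $0$. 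Since $0\in\langle\alpha_1,\ldots,\alpha_n\rangle$ means some defining system has value $0$, the existence of such a $\rho$ is equivalent to the vanishing of the Massey product.

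The main obstacle I expect is bookkeeping rather than conceptual: matching the sign conventions and normalization of the defining-system formula in \cite[Def.~2.1]{mt:massey} against the matrix-entry computation, and ensuring that all cochains and representations are continuous — which for a pro-$p$ group with finite coefficients is automatic, since one works with the inhomogeneous continuous bar complex and the correspondence above sends locally constant cochains to homomorphisms with open kernel, and conversely, in both directions.
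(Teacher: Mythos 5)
Your proof is correct, and it is the standard Dwyer-style argument: matrix entries of a (projective) unipotent representation satisfy exactly the cocycle equations of a defining system, with the central extension $\{1\}\to\mathrm{Z}(\dbU_{n+1})\to\dbU_{n+1}\to\bar\dbU_{n+1}\to\{1\}$ controlling the vanishing of the value. The paper does not prove this proposition at all --- it quotes it as a known ``pro-$p$ translation'' of Dwyer's theorem, citing \cite[Lemma~9.3]{eq:kummer} --- and your argument is precisely the proof underlying that citation, so there is no substantive divergence.
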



\subsection{Proof of Theorem~1.1}\label{ssec:massey raags}

We are ready to prove Theorem~\ref{thm:massey intro}.

\begin{proof}[Proof of Theorem~1.1]
Let $\alpha_1,\ldots,\alpha_n$ a sequence of elements of $\rmH^1(G_\Gamma,\Z/p)$ such that $\alpha_h\smallsmile\alpha_{h+1}=0$ for every $h=1,\ldots,n$ --- possibly, $\alpha_h=\alpha_{h'}$ for some $h'\neq h$ ---, and write 
$ \alpha_h=a_{1,h}\chi_1+\ldots+a_{d,h}\chi_d$ for every $h$ (by Proposition~\ref{prop:massey cup}--(i) we may assume that $\alpha_h\neq0$ for every $h$).
Then for every $h=1,\ldots,d-1$
\begin{equation}\label{eq:sum massey}
\begin{split}
  \alpha_h\smallsmile\alpha_{h+1} &= \left(\sum_{i=1}^d a_{i,h}\chi_i\right)\smallsmile\left(\sum_{j=1}^d a_{j,h+1}\chi_j\right)\\
  &= \sum_{\substack{ 1\leq l<l'\leq d \\ \{v_l,v_{l'}\}\in\calE }}
  \left(a_{l,h}a_{l',h+1}-a_{l',h}a_{l,h+1}\right)\chi_l\smallsmile\chi_{l'} 
\end{split}
\end{equation}
--- recall that $\chi_j\smallsmile\chi_i=-\chi_i\smallsmile\chi_j$ for every $1\leq i,j\leq d$ 
---, which is trivial by hypothesis. Since ${ \mathcal{B}_{\Gamma^\ast}^2}$ is a basis of $\Lambda_2(\Gamma^\ast)$, one has
$$a_{l,h}a_{l',h+1}-a_{l',h}a_{l,h+1}=0$$ whenever $\{v_l,v_{l'}\}\in\calE$.

Let $F$ be the free pro-$p$ group generated by $\calV$, and let $\tilde\rho\colon F\to\dbU_{n+1}$ be the homomorphism of pro-$p$ groups defined by 
\[
 \tilde\rho(v_i)=\left(\begin{array}{cccccc} 1 & \alpha_1(v_i) & 0 & \cdots & & 0 \\
                                             & 1 & \alpha_2(v_i) & \cdots & & 0 \\
                                             &  & 1             & \ddots & & \vdots \\
                                             &  &  & \ddots &\alpha_{n-1}(v_i) & 0 \\
                                             &  &  &  & 1 & \alpha_n(v_i) \\
                                             &  &  &  &   & 1                 \end{array}\right).
\]
Observe that $\alpha_h(v_i)=a_{i,h}$ for every $h=1,\ldots,n$ and $i=1,\ldots,d$.
Then 
\[
 \tilde\rho(v_iv_j)=\left(\begin{array}{cccccc} 1 & a_{i,1}+a_{j,1} & a_{i,1}a_{j,2} & 0 & \cdots & 0 \\
                                             & 1 & a_{i,2}+a_{j,2} & a_{i,2}a_{j,3} & \cdots  & 0  \\
                                             &  & 1       & a_{i,3}+a_{j,3}      & \ddots &  \vdots \\
                                             &  &  & \ddots & \ddots & a_{i,n-1}a_{j,n} \\
                                             &  &  &  & 1 & a_{i,n}+a_{j,n} \\
                                             &  &  &  &  & 1                 \end{array}\right).
\]
If $\{v_i,v_j\}\in\calE$, then $a_{i,h}a_{j,h+1}=a_{j,h}a_{i,h+1}$, so that $\tilde\rho(v_iv_j)=\tilde\rho(v_jv_i)$; on the other hand, $v_i$ and $v_j$ commute in $G_\Gamma$.
Therefore, $\tilde\rho$ yields a homomorphism $\rho\colon G_\Gamma\to\dbU_{n+1}$ such that $\rho_{h,h+1}=\alpha_h$ for every $h=1,\ldots,n$, and by Proposition~\ref{prop:masse unip} the $n$-fold Massey product $\langle\alpha_1,\ldots,\alpha_n\rangle$ is defined and vanishes.
\end{proof}


\subsection{Pro-$p$ groups of $p$-absolute Galois type and Massey products}\label{ssec:pabsgaltype}

Let $G$ be a pro-$p$ group.
First of all, we underline that for every $\alpha\in\rmH^1(G,\Z/p)$, the sequence \eqref{eq:exactsequence intro} is a complex, i.e., 
\begin{equation}\label{eq:complex}
\mathrm{cor}_{N,G}^1(\alpha')\smallsmile\alpha=0\qquad\text{and}\qquad\rmr_{G,N}(\alpha''\smallsmile\alpha)=0
\end{equation}
for every $\alpha'\in\rmH^1(N,\Z/p)$ and $\alpha''\in\rmH^1(G,\Z/p)$.
Moreover, observe that if $\alpha\in\rmH^1(G,\Z/p)$ is equal to 0, then the sequence \eqref{eq:exactsequence intro} is trivially exact at both $\rmH^1(G,\Z/p)$ and $\rmH^2(G,\Z/p)$, as both $\mathrm{cor}_{N,G}^1$ and $\res_{G,N}^2$ are the identity maps, and the cup-product by $\alpha$ is the trivial map.

The following notion was introduced in \cite[Def.~6.1.1]{LLSWW}.

\begin{defin}\label{defin:p-massey}\rm
 A pro-$p$ group $G$ has the {\sl $p$-cyclic Massey vanishing property} if for all $\alpha,\beta\in\rmH^1(G,\Z/p)$ such that $\alpha\smallsmile\beta=0$, the $p$-fold Massey product
 \[
  \langle\underbrace{\alpha,\ldots,\alpha}_{p-1\text{ times}},\beta\rangle
 \]
vanishes.
\end{defin}

Observe that if $p=2$ then every pro-$2$ group has, trivially, the $2$-cyclic vanishing property by \eqref{eq:cup 2massey}.

If $\K$ is a field containing a root of 1 of order $p$, then the maximal pro-$p$ Galois group $G_{\K}(p)$ has the $p$-cyclic Massey vanishing property (cf. \cite[Thm.~4.3]{sharifi:massey}, see also \cite[\S~6.1]{LLSWW}).
The following remarkable result was proved by Lam et al. (cf. \cite[Prop.~6.1.3--6.1.4 and Thm.~C]{LLSWW}).

\begin{thm}\label{prop:LLSWW}
 Let $G$ be a pro-$p$ group. 
 \begin{itemize}
  \item[(i)] If the sequence \eqref{eq:exactsequence intro} is exact at $\rmH^2(G,\Z/p)$ for every $\alpha\in\rmH^1(G,\Z/p)$, then it is exact also at $\rmH^1(G,\Z/p)$ for every $\alpha$, i.e., $G$ is of $p$-absolute Galois type.
  \item[(ii)] The sequence \eqref{eq:exactsequence intro} is exact at $\rmH^1(G,\Z/p)$ for every $\alpha\in\rmH^1(G,\Z/p)$ if, and only if, $G$ has the $p$-cyclic Massey vanishing property.
  \item[(iii)] If $G$ has the $p$-cyclic Massey vanishing property, then it has also the 3-Massey vanishing property.
 \end{itemize}
\end{thm}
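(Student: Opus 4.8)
The plan is to handle all three statements through one device attached to a nonzero $\alpha\in\rmH^1(G,\Z/p)$ (the case $\alpha=0$ is trivial, as already noted): the index-$p$ subgroup $N=\Ker(\alpha)$ together with the induced module $M:=\mathrm{Ind}_N^G\Z/p=\Z/p[G/N]$, where $G/N\cong\Z/p$ is generated by the image of some $\tilde\sigma$. Writing $T=\sigma-1$ on $\Z/p[G/N]$, the ordered basis $\{T^{p-1},\ldots,T,1\}$ identifies the $G$-action on $M$ with the homomorphism $\rho_0\colon G\to\dbU_p$, $\rho_0(g)=u_0^{\alpha(g)}$ for $u_0=I_p+\sum_{i=1}^{p-1}E_{i,i+1}$ the regular unipotent; in particular every superdiagonal entry of $\rho_0$ equals $\alpha$. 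Under Shapiro's isomorphism $\rmH^\bullet(N,\Z/p)\cong\rmH^\bullet(G,M)$, the augmentation $\varepsilon\colon M\to\Z/p$ induces $\mathrm{cor}_{N,G}$, and in this basis $\varepsilon$ is the projection onto the bottom coordinate (the one dual to $1\in\Z/p[G/N]$).

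For~(ii) I would prove the equivalence
\[
\langle\underbrace{\alpha,\ldots,\alpha}_{p-1},\beta\rangle\ \text{vanishes}\iff\beta\in\Img(\mathrm{cor}_{N,G}^1).
\]
By Proposition~\ref{prop:masse unip} the left side means there is $\rho\colon G\to\dbU_{p+1}$ with superdiagonal $(\alpha,\ldots,\alpha,\beta)$. For ``$\Leftarrow$'', write $\beta=\mathrm{cor}_{N,G}^1(\gamma)$ and lift $\gamma$ to a Shapiro cocycle $a\in Z^1(G,M)$; since the trivial module $\Z/p$ has no nonzero $1$-coboundaries, $\varepsilon\circ a=\beta$ on the nose, and the matrix with top-left block $\rho_0$ and last column $a$ is the required homomorphism. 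For ``$\Rightarrow$'', take any realizing $\rho$; its last column is a cocycle $a$ valued in the module $M'$ of the top-left block, whose bottom-coordinate projection $\pi\colon M'\to\Z/p$ is $G$-equivariant (the last row of an upper unitriangular matrix is trivial) and satisfies $\pi\circ a=\beta$. As $\pi$ is $G$-invariant, Frobenius reciprocity factors it through the augmentation of $\mathrm{Ind}_N^G\Z/p$, so $\pi_*$ factors through $\mathrm{cor}_{N,G}$ and $\beta\in\Img(\mathrm{cor}_{N,G}^1)$. Since $\alpha\smallsmile\beta=0\iff\beta\smallsmile\alpha=0$ and $\Img(\mathrm{cor})\subseteq\Ker(\textvisiblespace\smallsmile\alpha)$ always, this equivalence is precisely the statement that exactness of \eqref{eq:exactsequence intro} at $\rmH^1(G,\Z/p)$ (for every $\alpha$) coincides with the $p$-cyclic Massey vanishing property.

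For~(i) I would reformulate both exactness conditions via connecting maps along the uniserial filtration of $M$. The augmentation sequence $0\to IM\to M\xrightarrow{\varepsilon}\Z/p\to0$ gives $\beta\in\Img(\mathrm{cor}_{N,G}^1)\iff\delta(\beta)=0$ in $\rmH^2(G,IM)$, the $E_\alpha$-sequence $0\to\Z/p\to E_\alpha\to\Z/p\to0$ (of class $\alpha$) identifies $\Ker(\textvisiblespace\smallsmile\alpha)$ at $\rmH^1$ with the image of $\rmH^1(G,E_\alpha)$, and the dual socle sequence $0\to\Z/p\to M\to M/\mathrm{soc}\to0$ turns exactness at $\rmH^2$ into the assertion that the connecting map out of $\rmH^1(G,M/\mathrm{soc})$ has image exactly $\Img(\textvisiblespace\smallsmile\alpha)$. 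The task is to compare these connecting maps by climbing the length-$(p-1)$ module $IM$ layer by layer: its top layer contributes precisely $\beta\smallsmile\alpha$ (by naturality of $\delta$ against the truncation $M\twoheadrightarrow E_\alpha$), while the deeper layers must be controlled, via dimension shifting, by the hypothesised $\rmH^2$-exactness for the relevant characters. I expect this to be the main obstacle: showing that $\beta\smallsmile\alpha=0$ forces the \emph{whole} class $\delta(\beta)\in\rmH^2(G,IM)$ to vanish, not only its top component, since the higher obstructions do not visibly reduce to a single cup product and their bookkeeping seems to need the full force of exactness at $\rmH^2$ across all one-dimensional characters.

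For~(iii) I would use~(ii) to linearise the hypothesis: given $\alpha_1\smallsmile\alpha_2=\alpha_2\smallsmile\alpha_3=0$ with $\langle\alpha_1,\alpha_2,\alpha_3\rangle$ defined, exactness at $\rmH^1$ for $\alpha_2$ puts both $\alpha_1$ and $\alpha_3$ into $\Img(\mathrm{cor}_{N_2,G}^1)$, where $N_2=\Ker(\alpha_2)$. Expressing the triple product through a defining system, vanishing amounts to its value lying in the indeterminacy $\alpha_1\smallsmile\rmH^1(G,\Z/p)+\rmH^1(G,\Z/p)\smallsmile\alpha_3$; the plan is to transport the computation to $N_2$, where $\alpha_2$ restricts to $0$, exploiting that $\alpha_1,\alpha_3$ are corestrictions and a projection formula relating Massey products, restriction and corestriction, and then to push the resulting (necessarily vanishing) data back up via $\mathrm{cor}_{N_2,G}$. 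The delicate point here, and the second genuine obstacle, is matching a $3$-fold product against a hypothesis phrased for $p$-fold cyclic products when $p\ge5$; reconciling these, presumably by using that the realizing $\dbU_4$-representation restricted to $N_2$ loses its middle character, is where the argument must do its real work. Of the two, I expect~(i) to be harder, since~(iii) can lean on the clean corestriction dictionary established in~(ii), whereas~(i) requires genuinely new control of the higher connecting maps.
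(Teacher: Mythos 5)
First, a framing point: the paper does not prove Theorem~\ref{prop:LLSWW} at all --- it is quoted from Lam--Liu--Sharifi--Wake--Wang \cite{LLSWW} (Prop.~6.1.3--6.1.4 and Thm.~C), whose engine is the filtration of $\Z/p[G/N]$ by powers of the augmentation ideal and the resulting generalized Bockstein maps. So your attempt must stand on its own, and it does not: part~(ii) has a genuine gap, and parts~(i) and~(iii) are plans rather than proofs. In (ii), your direction ``$\Leftarrow$'' (corestriction implies vanishing) is correct, but the direction ``$\Rightarrow$'' breaks at the step ``as $\pi$ is $G$-invariant, Frobenius reciprocity factors it through the augmentation of $\mathrm{Ind}_N^G\Z/p$.'' Frobenius reciprocity, applied to the coinduced module $M=\Z/p[G/N]$, converts an $N$-equivariant functional $\lambda\colon M'\to\Z/p$ into a $G$-map $f\colon M'\to M$ whose composite with the \emph{identity-coset projection} is $\lambda$; composing $f$ with the augmentation instead gives $\varepsilon\circ f=\sum_{xN\in G/N}\lambda\circ\rho'(x)^{-1}$, the trace of $\lambda$. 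Hence $\pi$ factors through $\varepsilon$ if and only if $\pi$ is such a trace, and taking $\lambda=\pi$ itself only produces $p\cdot\pi=0$. The structure of $M'$ is essential here, and your argument never uses it: an arbitrary realizing homomorphism $\rho$ from Proposition~\ref{prop:masse unip} has top-left block $\rho'$ whose entries above the superdiagonal are unconstrained, and their restrictions to $N$ can be nonzero homomorphisms, so $N$ may act nontrivially on $M'$; then $M'$ is neither $\Z/p[G/N]$ nor a quotient of it. Concretely, for $p=3$ let $G$ be free pro-$3$ on $x,y$, $\alpha=x^{\ast}$, $\beta=y^{\ast}$, and choose $\rho\colon G\to\dbU_4$ realizing $\langle\alpha,\alpha,\beta\rangle\ni0$ with $(1,3)$-entry $\binom{\alpha}{2}+y^{\ast}$: every $N$-equivariant functional $\lambda$ on $M'$ must kill $e_1$, and one computes $\mathrm{Tr}(\lambda)=\lambda(e_1)\,\pi=0$, so no $f$ with $\varepsilon\circ f=\pi$ exists and your argument halts (even though the conclusion happens to hold for a free group). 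A second sanity check: if the step were valid as worded, the same words would show $\pi_{\ast}\rmH^1(G,M'')\subseteq\Img(\mathrm{cor}^1_{N,G})$ for \emph{any} $G$-module $M''$ with a $G$-equivariant functional $\pi$; taking $M''=\Z/p$, $\pi=\mathrm{id}$, $G=\Z_p^2$ contradicts $\Img(\mathrm{cor}^1_{N,G})\subseteq\Span\{\alpha\}$. Repairing this --- showing that a vanishing cyclic Massey product always admits a realizing homomorphism whose top-left block is the standard one $g\mapsto u_0^{\alpha(g)}$, i.e.\ that one may restrict to filtration-compatible (``proper'') defining systems --- is precisely the hard content of \cite{LLSWW}.

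For parts~(i) and~(iii) you explicitly leave the decisive steps open: in (i), ``showing that $\beta\smallsmile\alpha=0$ forces the \emph{whole} class $\delta(\beta)\in\rmH^2(G,IM)$ to vanish, not only its top component,'' and in (iii), ``matching a $3$-fold product against a hypothesis phrased for $p$-fold cyclic products when $p\ge5$.'' These are not details: they are the theorems. Both are established in \cite{LLSWW} by climbing the augmentation filtration of $\Z/p[G/N]$ with the generalized Bockstein maps (for (i)) and by a separate argument for Thm.~C (for (iii)), and nothing in your sketch substitutes for either; in particular (iii) cannot simply ``lean on the corestriction dictionary of (ii),'' since translating $\alpha_1,\alpha_3\in\Img(\mathrm{cor}^1_{N_2,G})$ into a vanishing triple product requires a projection-formula argument you do not supply. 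So the proposal, taken as a whole, proves at most one implication of one of the three statements.
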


\begin{exam}\rm
 Let $G$ be the pro-$p$ group of Example~\ref{ex:MT}.
 Then by Theorem~\ref{prop:LLSWW}--(iii) $G$ has not the $p$-cyclic Massey vanishing property, and thus by Theorem~\ref{prop:LLSWW}--(ii) $G$ is not of $p$-absolute Galois type.
\end{exam}



\section{Pro-$p$ RAAGs of $p$-absolute Galois type}\label{sec:raags pabsgaltype}


\subsection{Pro-$p$ RAAGs of $p$-absolute Galois type}\label{ssec:RAAGs pagt}

By Theorem~\ref{thm:massey intro}, the pro-$p$ RAAG $G_\Gamma$ associated to any simplicial graph $\Gamma=(\calV,\calE)$ has the $p$-cyclic Massey vanishing property.
Pick $\alpha\in\rmH^1(G_\Gamma,\Z/p)$, and set $N=\Ker(\alpha)$.
By Theorem~\ref{prop:LLSWW}--(ii), the sequence 
\begin{equation}\label{eq:exactsequence GGamma}
 \xymatrix@C=1.2truecm{\rmH^1(N,\Z/p)\ar[r]^-{\mathrm{cor}_{N,G_\Gamma}^1} & \rmH^1(G_\Gamma,\Z/p)\ar[r]^-{c_\alpha} &
 \rmH^2(G_\Gamma,\Z/p)\ar[r]^-{\rmr_{G_\Gamma,N}} & \rmH^2(N,\Z/p)},
\end{equation}
is exact at $\rmH^1(G_\Gamma,\Z/p)$ --- here $c_\alpha(\beta)=\beta\smallsmile\alpha$ for every $\beta\in\rmH^1(G_\Gamma,\Z/p)$.
Moreover, by Theorem~\ref{prop:LLSWW}--(i), in order to prove Theorem~\ref{thm:AbsGalType intro}--(i) it is enough to show that, given a chordal graph $\Gamma$, for every non-trivial $\alpha\in\rmH^1(G_\Gamma,\Z/p)$ the sequence \eqref{eq:exactsequence GGamma} is exact at $\rmH^2(G_\Gamma,\Z/p)$.


\subsection{Cup-product}\label{ssec:cup product}

{  
Set $\alpha=\chi_1+\ldots+\chi_m\in\rmH^1(G_\Gamma,\Z/p)$ for some $1\leq m\leq d$.
The goal of this subsection is to compute $\dim(\Img(c_\alpha))$.

Henceforth,} we identify $\rmH^n(G_\Gamma,\Z/p)=\Lambda_n(\Gamma^\ast)$.
Now, let $\Gamma_\alpha=(\calV(\Gamma_\alpha),\calE(\Gamma_\alpha))$ be the induced subgraph of $\Gamma$ with vertices $\calV(\Gamma_\alpha)=\{v_1,\ldots,v_m\}$ --- so that $m=\dd(\Gamma_\alpha)$ ---, and put $V_0=\Span\{\chi_{m+1},\ldots,\chi_d\}$.
Then 
\begin{equation}\label{eq:cohom Gamma sum}
\begin{split}
&\rmH^1(G_\Gamma,\Z/p)=\Lambda_1(\Gamma^\ast)=\Lambda_1(\Gamma_\alpha^\ast)\oplus V_0,\\
 &\rmH^2(G_\Gamma,\Z/p)=\Lambda_2(\Gamma^\ast)=\Lambda_2(\Gamma_\alpha^\ast)\oplus \left(\Lambda_1(\Gamma^\ast)\smallsmile V_0\right). 
\end{split}
\end{equation}
Consequently,
\begin{equation}\label{eq:img calpha}
\Img(c_\alpha)=\left(\Lambda_1(\Gamma_\alpha^\ast)\smallsmile\alpha\right)
\oplus \left( V_0\smallsmile\alpha\right), 
\end{equation}
where the left-side summand is a subspace of $\Lambda_2(\Gamma_\alpha^\ast)$, and the right-side summand is a subspace of $\Lambda_1(\Gamma^\ast)\smallsmile V_0$.
We study the dimension of these two summands in the next two propositions.

\begin{prop}\label{prop:cup alpha}
Let $\Gamma=(\calV,\calE)$ be a simplicial graph, with $\calV=\{v_1,\ldots,v_d\}$ --- so that $d=\dd(\Gamma)$ ---, and let $\calV^\ast=\{\chi_1,\ldots,\chi_d\}$ be the basis of $\Lambda_1(\Gamma^\ast)$ dual to $\calV$.
If $\alpha=\chi_1+\ldots+\chi_d$, then 
 \begin{equation}\label{eq:dim alpha cup alpha prop}
    \dim\left(\Lambda_1(\Gamma^\ast)\smallsmile\alpha\right) = \dd(\Gamma)-\rmr(\Gamma).
 \end{equation}
\end{prop}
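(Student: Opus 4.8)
The plan is to compute the kernel of the cup-product map $c_\alpha\colon \Lambda_1(\Gamma^\ast)\to\Lambda_2(\Gamma^\ast)$, $\beta\mapsto\beta\smallsmile\alpha$, and then read off the dimension of its image by rank--nullity, using $\dim\Lambda_1(\Gamma^\ast)=\dd(\Gamma)$. First I would take a general element $\beta=\sum_{i=1}^d b_i\chi_i$ and expand $\beta\smallsmile\alpha$ by bilinearity, exploiting the relations of the Stanley--Reisner algebra $\bfLam_\bullet(\Gamma^\ast)=\bfH^\bullet(G_\Gamma)$ recalled in \S\ref{ssec:raags cohom}: one has $\chi_i\smallsmile\chi_i=0$, $\chi_i\smallsmile\chi_j=0$ whenever $\{v_i,v_j\}\notin\calE$, and $\chi_j\smallsmile\chi_i=-\chi_i\smallsmile\chi_j$. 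Since $\alpha=\chi_1+\ldots+\chi_d$, the term $\chi_i\smallsmile\chi_j$ (for an edge $\{v_i,v_j\}$ with $i<j$) receives the coefficient $b_i$ from the pair $(i,j)$ and the coefficient $-b_j$ from the pair $(j,i)$, so that collecting over the canonical basis $\mathcal{B}_{\Gamma^\ast}^2$ of $\Lambda_2(\Gamma^\ast)$ yields
\[
 \beta\smallsmile\alpha=\sum_{\substack{1\le i<j\le d\\ \{v_i,v_j\}\in\calE}}(b_i-b_j)\,\chi_i\smallsmile\chi_j.
\]

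Next, because $\mathcal{B}_{\Gamma^\ast}^2$ is a basis of $\Lambda_2(\Gamma^\ast)$, I would conclude that $\beta\in\Ker(c_\alpha)$ if, and only if, $b_i=b_j$ for every edge $\{v_i,v_j\}\in\calE$. The point is then to interpret the tuple $(b_1,\ldots,b_d)$ as a $\Z/p$-valued function on the vertex set $\calV$: the kernel condition says precisely that this function is constant along edges, hence constant on each connected component of $\Gamma$. Consequently $\Ker(c_\alpha)$ is exactly the space of functions on $\calV$ that are constant on connected components, and the indicator functions of the $\rmr(\Gamma)$ components form a basis of it, giving $\dim\Ker(c_\alpha)=\rmr(\Gamma)$.

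Finally, rank--nullity gives
\[
 \dim\left(\Lambda_1(\Gamma^\ast)\smallsmile\alpha\right)=\dim\Img(c_\alpha)=\dd(\Gamma)-\dim\Ker(c_\alpha)=\dd(\Gamma)-\rmr(\Gamma),
\]
which is the claimed formula \eqref{eq:dim alpha cup alpha prop}. I do not expect a genuine obstacle here: the only steps requiring care are the sign bookkeeping in the cup-product expansion (ensuring the coefficient of each basis element is $b_i-b_j$ rather than $b_i+b_j$) and the identification of the kernel with the locally constant functions, which is what produces the connectivity invariant $\rmr(\Gamma)$.
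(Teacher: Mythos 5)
Your proof is correct, but it takes a genuinely different route from the paper's. You compute the kernel of the map $c_\alpha\colon\Lambda_1(\Gamma^\ast)\to\Lambda_2(\Gamma^\ast)$, $\beta\mapsto\beta\smallsmile\alpha$, directly: expanding in the canonical basis $\mathcal{B}_{\Gamma^\ast}^2$ gives
\[
\beta\smallsmile\alpha=\sum_{\substack{1\le i<j\le d\\ \{v_i,v_j\}\in\calE}}(b_i-b_j)\,\chi_i\smallsmile\chi_j,
\]
so the kernel consists exactly of the tuples $(b_1,\ldots,b_d)$ that are constant on each connected component of $\Gamma$ --- in effect you identify $\Ker(c_\alpha)$ with the locally constant $\Z/p$-valued functions on $\Gamma$, of dimension $\rmr(\Gamma)$ --- and rank--nullity finishes the argument. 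The paper instead works with the spanning set $\{\chi_i\smallsmile\alpha \mid i\ge 2\}$ (one generator being redundant because $\alpha\smallsmile\alpha=0$) and proves its linear independence when $\Gamma$ is connected, first for trees by induction with leaf removal, then in general by projecting onto a spanning tree, and finally sums over connected components. Your approach is shorter and makes the appearance of the connectivity invariant $\rmr(\Gamma)$ conceptually transparent, whereas the paper's argument produces an explicit linearly independent spanning set and rehearses the restriction-to-subgraph technique that it reuses later (e.g., in Proposition~\ref{prop:res2}). Note that both arguments rest on the same two inputs: that $\mathcal{B}_{\Gamma^\ast}^2$ is a basis of $\Lambda_2(\Gamma^\ast)$, and that $\chi_i\smallsmile\chi_i=0$ (needed also when $p=2$, where it follows from the identification of $\bfH^\bullet(G_\Gamma)$ with the Stanley--Reisner algebra rather than from antisymmetry alone); granting these, your proof is complete.
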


\begin{proof}
Clearly, $\Lambda_1(\Gamma^\ast)\smallsmile\alpha$ is generated by the set $\mathcal{S}=\{\chi_1\smallsmile\alpha,\ldots,\chi_d\smallsmile\alpha\}$.
Moreover, 
\[
 0=\alpha\smallsmile\alpha=(\chi_1+\ldots+\chi_d)\smallsmile\alpha=\chi_1\smallsmile\alpha+\ldots+\chi_d\smallsmile\alpha,
\]
and thus for any $j\in\{1,\ldots,d\}$ one has $\chi_j\smallsmile\alpha=-\sum_{i\neq j}\chi_i\smallsmile\alpha$.
Therefore, for any $j$ the set 
\[
 \mathcal{S}_j=\left\{\:\chi_1\smallsmile\alpha,\:\ldots,\:\chi_{j-1}\smallsmile\alpha,\:
 \chi_{j+1}\smallsmile\alpha,\:\ldots,\:\chi_d\smallsmile\alpha\:\right\}
\]
is enough to generate $\Lambda_1(\Gamma^\ast)\smallsmile\alpha$.

Assume first that $\Gamma$ is a tree.
We proceed by induction on the number of vertices of $\Gamma$.
If $|\calV|<3$ then \eqref{eq:dim alpha cup alpha prop} holds trivially, so assume that $\Gamma$ has at least three vertices.
Up to renumbering the vertices, we may assume that $v_1$ is a leaf of $\Gamma$ (i.e., $\{v_1,v_i\}\in\calE$
only for one $i\in\{2,\ldots,d\}$, say $i=2$).
We claim that $\mathcal{S}_1$ is linearly independent.
Indeed, let $\beta$ be a $\Z/p$-linear combination of the elements of $\mathcal{S}_1$, i.e., 
\begin{equation}\label{eq:beta tree}
 \beta = a_2(\chi_2\smallsmile\alpha)+\ldots+a_d(\chi_d\smallsmile\alpha) =
 \sum_{\substack{\{v_i,v_j\}\in{ \calE} \\ { i<j} }}b_{ij}(\chi_i\smallsmile\chi_j)
\end{equation}
for some $a_i, b_{ij}\in\Z/p$ (where the $b_{ij}$'s are uniquely determined as ${ \mathcal{B}_{\Gamma^\ast}^2}$ is a basis of $\Lambda_2(\Gamma^\ast)$).
Since $v_1$ is a leaf, joined only to $v_2$, one has $b_{1,2}=-a_2$.
Thus, if $\beta=0$ then $b_{1,2}=0$, and hence $a_2=0$.
Now let $\rmT=(\calV(\rmT),\calE(\rmT))$ be the subtree of $\Gamma$ obtained removing the vertex $v_1$ --- namely, $\calV(\rmT)=\calV\smallsetminus\{v_1\}$ and $\calE(\rmT)=\calE\smallsetminus\{\{v_1,v_2\}\}$.
We denote the elements of $\calV(\rmT)^\ast$ by $\chi_2\vert_{\rmT},\ldots,\chi_d\vert_{\rmT}$, and we set $\alpha_{\rmT}=\chi_2\vert_{\rmT}+\ldots+\chi_d\vert_{\rmT}$.
By induction, 
$$\{\:(\chi_3\vert_{\rmT})\smallsmile\alpha_{\rmT},\ldots,(\chi_d\vert_{\rmT})\smallsmile\alpha_{\rmT}\:\}$$
is a linearly independent subset of $\Lambda_2(\rmT^\ast)$.
Let $\rmr_{\Gamma,\rmT}\colon\Lambda_2(\Gamma^\ast)\to\Lambda_2(\rmT^\ast)$ be the epimorphism of $\Z/p$-vector spaces defined by
\[
 \rmr_{\Gamma,\rmT}(\chi_i\smallsmile\chi_j)=\begin{cases}(\chi_i\vert_{\rmT})\smallsmile(\chi_j\vert_{\rmT}), & \text{if }i\neq1 \\0, &\text{if }i=1 \end{cases}\]
for $\chi_i\smallsmile\chi_j\in{ \mathcal{B}_{\Gamma^\ast}^2}$, $i<j$.
If $\beta=0$, then 
\[
 0=\rmr_{\Gamma,\rmT}(\beta)=a_3\left((\chi_3\vert_{\rmT})\smallsmile\alpha_{\rmT}\right)+
 \ldots+a_d\left((\chi_d\vert_{\rmT})\smallsmile\alpha_{\rmT}\right),
\]
and the inductive hypothesis implies that $a_3=\ldots=a_d=0$.
Therefore, $\mathcal{S}_1$ is linearly independent.

Assume now that $\Gamma$ is connected, and let $\rmT=(\calV(\rmT),\calE(\rmT))$ be a maximal tree of $\Gamma$. 
Then $\calV(\rmT)=\calV$ (cf. \cite[Ch.~I, \S~2.3, Prop.~11]{serre:trees}).
As above, we denote the elements of $\calV(\rmT)^\ast$ by $\chi_i\vert_{\rmT}$ for $1\leq i\leq d$, and $\alpha_{\rmT}=\chi_1\vert_{\rmT}+\ldots+\chi_d\vert_{\rmT}$.
Let $\rmr_{\Gamma,\rmT}\colon \Lambda_2(\Gamma^\ast)\to\Lambda_2(\rmT^\ast)$ be the epimorphism of $\Z/p$-vector spaces defined  by
\[
 \rmr_{\Gamma,\rmT}(\chi_i\smallsmile\chi_j)=\begin{cases}(\chi_i\vert_{\rmT})\smallsmile(\chi_j\vert_{\rmT}), & \text{if }\{v_i,v_j\}\in\calE(\rmT) \\0, &\text{if } \{v_i,v_j\}\notin\calE(\rmT),\end{cases}\]
and set 
\[
 \beta = a_2(\chi_2\smallsmile\alpha)+\ldots+a_d(\chi_d\smallsmile\alpha),\qquad\text{with }a_i\in\Z/p.
\]
Then 
\begin{equation}\label{eq:beta subtree}
 \rmr_{\Gamma,\rmT}(\beta)=a_2\left((\chi_2\vert_{\rmT})\smallsmile\alpha_{\rmT}\right)+
 \ldots+a_d\left((\chi_d\vert_{\rmT})\smallsmile\alpha_{\rmT}\right).
\end{equation}
Since $\{(\chi_i\vert_{\rmT})\smallsmile\alpha_{\rmT}\:\mid\:i=1,\ldots,d\}$ is a linearly independent subset of $\Lambda_2(\rmT^\ast)$, if $\beta=0$ then by \eqref{eq:beta subtree} one has $a_2=\ldots=a_d$, so that also $\mathcal{S}_1$ is linearly independent.

Finally, if $\Gamma_1,\ldots,\Gamma_{\rmr(\Gamma)}$ are the connected components of $\Gamma$, then 
$$ \Lambda_1(\Gamma^\ast)=\Lambda_1(\Gamma_1^\ast)\oplus\ldots\oplus\Lambda_1(\Gamma_{\rmr(\Gamma)}^\ast),$$
and moreover $\beta\smallsmile\beta'=0$ for $\beta\in\Lambda_1(\Gamma_i^\ast)$ and $\beta'\in\Lambda_1(\Gamma_j^\ast)$, $i\neq j$.
Hence, 
\[
 \Lambda_1(\Gamma^\ast)\smallsmile\alpha=
 \left(\Lambda_1(\Gamma_1^\ast)\smallsmile\alpha_1\right)\oplus\ldots\oplus\left(\Lambda_1(\Gamma_{\rmr(\Gamma)}^\ast)\smallsmile\alpha_{\rmr(\Gamma)}\right),
\]
where $\alpha_j=\sum_{v_i\in\calV(\Gamma_j)}\chi_i$ for every $j=1,\ldots,\rmr(\Gamma)$, and this yields \eqref{eq:dim alpha cup alpha prop}.
\end{proof}

\begin{prop}\label{prop:dim V0cup} 
Let
$$\calV_{0,\alpha}=\left\{\:v_j\:\mid\: m<j\leq d\text{ and }\{v_i,v_j\}\in\calE\text{ for some }1\leq i\leq m\:\right\}$$
be the set of vertices of $\Gamma$ not lying in $\Gamma_\alpha$ but joined to some vertices of $\Gamma_\alpha$.
Then
\begin{equation}\label{eq:dim V0cup}
 \dim(V_0\smallsmile\alpha)=|\calV_{0,\alpha}|.
\end{equation}
\end{prop}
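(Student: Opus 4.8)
The plan is to compute $V_0\smallsmile\alpha$ directly on the canonical basis $\mathcal{B}_{\Gamma^\ast}^2$ of $\Lambda_2(\Gamma^\ast)=\rmH^2(G_\Gamma,\Z/p)$, exploiting the fact that the Stanley-Reisner relations make most cup-products of generators vanish. First I would observe that $V_0\smallsmile\alpha$ is generated by the set $\{\chi_j\smallsmile\alpha\mid m<j\leq d\}$, and expand each generator as
\[
\chi_j\smallsmile\alpha=\sum_{i=1}^m\chi_j\smallsmile\chi_i
=-\sum_{\substack{1\leq i\leq m\\ \{v_i,v_j\}\in\calE}}\chi_i\smallsmile\chi_i,
\]
where I have used anticommutativity ($\chi_j\smallsmile\chi_i=-\chi_i\smallsmile\chi_j$ since $i\leq m<j$) and the fact that in $\bfLam_\bullet(\Gamma^\ast)$ one has $\chi_i\smallsmile\chi_j=0$ whenever $\{v_i,v_j\}\notin\calE$, so that only the edge-terms survive. (The displayed typo aside, the second index is $j$ throughout.)

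From this expansion the dichotomy is immediate: if $v_j\notin\calV_{0,\alpha}$, i.e.\ $v_j$ is joined to no vertex of $\Gamma_\alpha$, then every term vanishes and $\chi_j\smallsmile\alpha=0$; whereas if $v_j\in\calV_{0,\alpha}$ then $\chi_j\smallsmile\alpha$ is a nonzero $\Z/p$-combination of the basis elements $\chi_i\smallsmile\chi_j\in\mathcal{B}_{\Gamma^\ast}^2$ with $i\leq m<j$ and $\{v_i,v_j\}\in\calE$. Hence the only possibly-nonzero generators of $V_0\smallsmile\alpha$ are the $|\calV_{0,\alpha}|$ elements indexed by $\calV_{0,\alpha}$, giving the upper bound $\dim(V_0\smallsmile\alpha)\leq|\calV_{0,\alpha}|$.

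The key step — and the only one requiring an actual argument — is linear independence of $\{\chi_j\smallsmile\alpha\mid v_j\in\calV_{0,\alpha}\}$. Here the essential observation is that the basis vectors appearing in $\chi_j\smallsmile\alpha$ all have the form $\chi_i\smallsmile\chi_j$ with fixed \emph{second} index $j$. Therefore, for two distinct indices $j\neq j'$ in $\calV_{0,\alpha}$, the elements $\chi_j\smallsmile\alpha$ and $\chi_{j'}\smallsmile\alpha$ are supported on disjoint subsets of $\mathcal{B}_{\Gamma^\ast}^2$. Since the $\chi_j\smallsmile\alpha$ thus have pairwise disjoint supports and each is nonzero, any vanishing linear combination forces all coefficients to be $0$; hence they are linearly independent, and combining with the upper bound gives \eqref{eq:dim V0cup}.

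I expect no serious obstacle: unlike Proposition~\ref{prop:cup alpha}, whose diagonal choice of $\alpha$ destroys the disjoint-support structure and forces an inductive tree argument, the present $\alpha=\chi_1+\cdots+\chi_m$ together with $V_0=\Span\{\chi_{m+1},\dots,\chi_d\}$ produces generators that are automatically ``block-diagonal'' across the basis, so the disjoint-support remark settles independence outright. The only point demanding care is the bookkeeping of indices (always $i\leq m<j$) to be sure the relevant basis elements genuinely lie in $\mathcal{B}_{\Gamma^\ast}^2$ and that distinct $j$ never share a basis vector; this is routine given the description of $\mathcal{B}_{\Gamma^\ast}^2$ recalled in \S\ref{ssec:raags cohom}.
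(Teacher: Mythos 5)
Your proposal is correct and follows essentially the same route as the paper: both expand each generator $\chi_j\smallsmile\alpha$ (equivalently $\alpha\smallsmile\chi_j$) in the canonical basis $\mathcal{B}_{\Gamma^\ast}^2$, note it is nonzero exactly when $v_j\in\calV_{0,\alpha}$, and conclude linear independence from the fact that generators indexed by distinct $j$ are supported on disjoint subsets of the basis (the paper's sets $\mathcal{S}_{j,\alpha}$). If anything, your version is slightly tighter, since the paper's intermediate step showing the elements $\alpha\smallsmile\chi_j$ are pairwise distinct is subsumed by the disjoint-support argument you give directly.
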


\begin{proof}
Clearly, the set $\mathcal{S}_{0,\alpha}=\{\alpha\smallsmile\chi_j\mid v_j\in\calV_{0,\alpha}\}$ generates $V_0\smallsmile\alpha$.
On the other hand, we claim that $\mathcal{S}_{0,\alpha}$ is a linearly independent subset of $\Lambda_2(\Gamma^\ast)$.
Indeed, for $v_j\in\calV_{0,\alpha}$ one has 
\begin{equation}\label{eq:S0a}
\alpha\smallsmile \chi_j=\chi_1\smallsmile\chi_j+\ldots+\chi_m\smallsmile\chi_j,
\end{equation}
where at least one summand of the right-side term of \eqref{eq:S0a} is non-trivial, as $v_j\in\calV_{0,\alpha}$.
Moreover, observe that if $m<j'\leq d$, $j'\neq j$, and $v_{j'}\in\calV_{0,\alpha}$, then necessarily $\alpha\smallsmile\chi_{j'}\neq\alpha\smallsmile\chi_j$, as otherwise by \eqref{eq:S0a} one would have
\[
\chi_1\smallsmile\chi_j+\ldots+\chi_m\smallsmile\chi_j=\chi_1\smallsmile\chi_{j'}+\ldots+\chi_m\smallsmile\chi_{j'},
\]
and thus 
\begin{equation}\label{eq:repetitions}
\chi_i\smallsmile\chi_j=\chi_{i'}\smallsmile\chi_{j'}\qquad\text{for some }1\leq i,i'\leq m 
\end{equation}
such that $\chi_i\smallsmile\chi_j$ and $\chi_{i'}\smallsmile\chi_{j'}$ are not trivial, since $\mathcal{B}^{2}_{\Gamma^\ast}$ is a basis of $\Lambda_2(\Gamma^\ast)$.
But equality \eqref{eq:repetitions} is impossible, as $j'\neq i,j$ and hence $\{v_i,v_j\}\neq\{v_{i'},v_{j'}\}$.

For every $v_j\in\calV_{0,\alpha}$ set 
$$\mathcal{S}_{j,\alpha}=\left\{\:\chi_i\smallsmile\chi_j\:\mid\:1\leq i\leq m,\:\chi_i\smallsmile\chi_j\neq0\:\right\}.$$
Then, the sets $\mathcal{S}_{j,\alpha}$, with $v_j$ running through the elements of $\calV_{0,\alpha}$, are disjoint and non-empty subsets of the basis $\mathcal{B}_{\Gamma^\ast}^2$.
Therefore, by \eqref{eq:S0a} one has
$$\mathcal{S}_{0,\alpha}=\left\{\:\sum_{\beta\in\mathcal{S}_{j,\alpha}}\beta\:\mid\:v_j\in\calV_{0,\alpha}\:\right\}
\subseteq\bigoplus_{v_j\in\calV_{0,\alpha}}\Span\{\mathcal{S}_{j,\alpha}\},$$
so that $\mathcal{S}_{0,\alpha}$ is a linearly independent subset of $\Lambda_2(\Gamma^\ast)$, as claimed.
\end{proof}

Altogether, if $\alpha=\chi_1+\ldots+\chi_m\in\rmH^1(G_\Gamma,\Z/p)$ for some $1\leq m\leq d$, from \eqref{eq:img calpha} and from Propositions~\ref{prop:cup alpha}--\ref{prop:dim V0cup} one concludes that 
\begin{equation}\label{eq:dim Imcalpha}
\begin{split}
  \dim(\Img(c_\alpha)) &=\dim\left(\Lambda_1(\Gamma_\alpha^\ast)\smallsmile\alpha\right)+\dim(V_0\smallsmile\alpha) \\
  &=\left(\dd(\Gamma_\alpha)-\rmr(\Gamma_\alpha)\right)+|\calV_{0,\alpha}|,
\end{split}
\end{equation}
 where $\calV_{0,\alpha}$ is as in Proposition~\ref{prop:dim V0cup} (note that necessarily $\rmr(\Gamma_\alpha)\leq m$), with no restrictions on the {shape of the} simplicial graph $\Gamma$. 


\subsection{Restriction}\label{ssec:res}

{ 
The goal of this subsection is to study $\dim(\Img(\rmr_{G_\Gamma,N}))$ in case $\Gamma$ is a chordal graph.
By duality (cf. \ref{eq:H2}), this depends on how may defining relations of $G_\Gamma$ ``remain'' defining relations for a minimal presentation of $N$.

Throughout this subsection, we set $\alpha=\chi_1+\ldots+\chi_m\in\rmH^1(G_\Gamma,\Z/p)$ for some $1\leq m\leq d$, and we set 
$\Gamma_\alpha=(\calV(\Gamma_\alpha),\calE(\Gamma_\alpha))$, with $\calV(\Gamma_\alpha)=\{v_1,\ldots,v_m\}$, as in \S~\ref{ssec:cup product} (so that $m=\dd(\Gamma_\alpha)$).}
For $1\leq i\leq d$ set
\[
w_i =\begin{cases}
      v_iv_{i+1}^{-1}, & \text{if }i<m \\ v_i, & \text{if }i\geq m.
     \end{cases}\]
Then $\calW=\{w_1,\ldots,w_d\}$ is a minimal generating set of $G_\Gamma$, which induces a basis $\calW^\ast=\{\omega_1,\ldots,\omega_d\}$ of $\rmH^1(G_\Gamma,\Z/p)$ --- observe that $\omega_j=\chi_j$ for $j>m$. 
Set 
$$\calY_\alpha = \left\{\:w_1,\:\ldots,\:w_{m-1}\:\right\}\qquad\text{and}\qquad
\calY =\calY_\alpha\cup\left\{\:v_{m+1},\:\ldots,\:v_d\:\right\}=\calW\smallsetminus\{\:w_m\:\},$$
and let $H_\alpha$ and $H$ be the subgroups of $G_{\Gamma}$ generated respectively by $\calY_\alpha$ and $\calY$.
Since $\alpha(w_i)=0$ for $i\neq m$, these two subgroups are contained in $N$.

\begin{lem}\label{lem:Y generating}
 The sets $\calY$ and $\calY_\alpha$ are minimal generating sets of $H$ and $H_\alpha$ respectively.
\end{lem}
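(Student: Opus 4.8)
The plan is to invoke the standard characterization of minimal generating sets of a finitely generated pro-$p$ group: a generating set of a pro-$p$ group $H$ is minimal if, and only if, its image in the Frattini quotient $H/\Phi(H)$ is $\Z/p$-linearly independent (equivalently, a basis) — this is the group-theoretic counterpart of the isomorphism $\rmH^1(H,\Z/p)\cong(H/\Phi(H))^\ast$ recorded in \eqref{eq:H1}. Since $H$ and $H_\alpha$ are \emph{defined} as the subgroups generated by $\calY$ and $\calY_\alpha$, these sets already generate; moreover each is finite (of size $d-1$ and $m-1$ respectively), so the relevant Frattini quotients are finite-dimensional. Thus it suffices to prove that the images of $\calY$ in $H/\Phi(H)$ are linearly independent, and likewise for $\calY_\alpha$ in $H_\alpha/\Phi(H_\alpha)$.

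First I would exploit the dual basis $\calW^\ast=\{\omega_1,\ldots,\omega_d\}$ already at hand. For each index $j\in\{1,\ldots,d\}\smallsetminus\{m\}$, restrict $\omega_j$ to $H$; this yields an element $\omega_j\vert_H\in\rmH^1(H,\Z/p)=\Hom(H,\Z/p)$, which automatically factors through $H/\Phi(H)$. The key observation is that, since $\calW^\ast$ is dual to $\calW$ and every $w_i$ with $i\neq m$ lies in $H$, one has
\[
 \omega_j\vert_H(w_i)=\omega_j(w_i)=\delta_{ij}\qquad\text{for all }i,j\in\{1,\ldots,d\}\smallsetminus\{m\};
\]
that is, the evaluation matrix indexed by $\{1,\ldots,d\}\smallsetminus\{m\}$ is the identity. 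Linear independence is then immediate: given a relation $\sum_{i\neq m}c_i\,\bar w_i=0$ in $H/\Phi(H)$, where $\bar w_i$ denotes the image of $w_i$, evaluating the character $\omega_j\vert_H$ — which descends to $H/\Phi(H)$ — yields $c_j=0$ for every $j\neq m$. Hence $\calY$ is a minimal generating set of $H$. The identical argument, restricting instead $\omega_1,\ldots,\omega_{m-1}$ to $H_\alpha$ and using $\omega_j\vert_{H_\alpha}(w_i)=\delta_{ij}$ for $i,j\in\{1,\ldots,m-1\}$, shows that $\calY_\alpha$ is a minimal generating set of $H_\alpha$.

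I do not expect a serious obstacle here: the argument is purely a duality and bookkeeping matter, and it never uses any special feature of $G_\Gamma$ beyond the existence of the dual basis. The only points needing care are that $\calY=\calW\smallsetminus\{w_m\}$ picks out exactly the indices $j\neq m$ (so the relevant characters are restrictions of genuine dual-basis elements, recalling $\omega_j=\chi_j$ for $j>m$), and that the restriction of a character to a subgroup evaluates on elements of that subgroup just as the original character does. Both follow directly from the fact, recorded immediately before the lemma, that $\calW^\ast$ is the basis of $\rmH^1(G_\Gamma,\Z/p)$ dual to the minimal generating set $\calW$.
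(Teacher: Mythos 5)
Your proof is correct and follows essentially the same route as the paper: both reduce minimality to linear independence of the images of $\calY$ (resp. $\calY_\alpha$) in the Frattini quotient, inherited from the fact that $\calW$ is a minimal generating set of $G_\Gamma$. The paper deduces this by passing from independence in $G_\Gamma/\Phi(G_\Gamma)$ to independence in $H/\Phi(H)$, while you make the same step explicit in dual form by evaluating the restricted characters $\omega_j\vert_H$ on the $w_i$ — two formulations of the identical observation.
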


\begin{proof}
 Since $\calY$ is a subset of $\calW$, which is a minimal generating set of $G_\Gamma$, the cosets $w_i\Phi(G_\Gamma)$, with $i\neq m$, are linearly independent in the $\Z/p$-vector space $G_\Gamma/\Phi(G_\Gamma)$, and thus also the cosets $w_i\Phi(H)$, with $i\neq m$, are linearly independent in $H/\Phi(H)$ --- and analogously the cosets $w_i\Phi(H_\alpha)$, with $1\leq i<m$, are linearly independent in $H_\alpha/\Phi(H_\alpha)$.
\end{proof}

By Lemma~\ref{lem:Y generating}, and by duality, $\rmH^1(H,\Z/p)$ and $\rmH^1(H_\alpha,\Z/p)$ have bases 
$$\calY^\ast=\left\{\:\omega_i\vert_H\:\mid\:w_i\in\calY\:\right\}\qquad \text{and} \qquad
\calY_\alpha^\ast=\left\{\:\omega_i\vert_{H_\alpha}\:\mid\:w_i\in\calY_\alpha\:\right\}$$ respectively.
The next proposition gives a lower bound for the dimension of the image of the map $\rmr_{G_\Gamma,H_\alpha}$ restricted to the summand $\Lambda_2(\Gamma_\alpha^\ast)$ of $\rmH^2(G_\Gamma,\Z/p)$ (cf. \eqref{eq:cohom Gamma sum}).
The idea is the following.
If $v_1,v_2,v_3$ are vertices of $\Gamma_\alpha$, and they are joined to each other (namely, they are the vertices of a 3-clique of $\Gamma_\alpha$), then the commutator 
$$[w_1,w_2]=\left[v_1v_2^{-1},v_2v_3^{-1}\right]=[v_1,v_2][v_1,v_3^{-1}][v_2^{-1},v_3^{-1}]$$
is trivial: since $w_1,w_2\in\calY_\alpha$, the relation $[w_1,w_2]=1$ is a defining relation of $H_\alpha$ induced by the three defining relations $[v_1,v_2]=[v_1,v_3]=[v_2,v_3]=1$ of $G_\Gamma$.
We use this fact, together with the recursive procedure to construct a chordal graph via patching subgraphs along cliques, starting from cliques.

\begin{prop}\label{prop:res2}
If $\Gamma$ is a chordal simplicial graph, then 
 \begin{equation}\label{eq:eulerchar}
  \dim\left(\rmr_{G_\Gamma,H_\alpha}(\Lambda_2(\Gamma_\alpha^\ast))\right) \geq 
  |\calE(\Gamma_\alpha)|-\dd(\Gamma_\alpha)+\rmr(\Gamma_\alpha).
\end{equation}
\end{prop}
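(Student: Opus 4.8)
The plan is to prove the equivalent statement $\dim(\Img \psi)\ge |\calE(\Gamma_\alpha)|-\dd(\Gamma_\alpha)+\rmr(\Gamma_\alpha)$ for the map $\psi:=\rmr_{G_\Gamma,H_\alpha}\big|_{\Lambda_2(\Gamma_\alpha^\ast)}$. First I would localise everything inside $G_{\Gamma_\alpha}$: since $H_\alpha\le G_{\Gamma_\alpha}\le G_\Gamma$ and, by \eqref{eq:cohom Gamma sum}, the restriction $\rmr_{G_\Gamma,G_{\Gamma_\alpha}}$ is the identity on the summand $\Lambda_2(\Gamma_\alpha^\ast)$ (it merely kills $\chi_{m+1},\dots,\chi_d$), one has $\psi=\rmr_{G_{\Gamma_\alpha},H_\alpha}\big|_{\Lambda_2(\Gamma_\alpha^\ast)}$. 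Thus I may forget $\Gamma$ and argue for a chordal graph $\Delta$ on $v_1,\dots,v_m$, its chain subgroup $H=\langle w_1,\dots,w_{m-1}\rangle$ with $w_i=v_iv_{i+1}^{-1}$, and $\psi_\Delta=\rmr_{G_\Delta,H}\big|_{\Lambda_2(\Delta^\ast)}$; the right-hand side of \eqref{eq:eulerchar} is then the cycle rank $c(\Delta):=|\calE(\Delta)|-\dd(\Delta)+\rmr(\Delta)$.

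The core is an induction on $\dd(\Delta)$ by deleting a simplicial vertex. Being chordal, $\Delta$ has a vertex $v_s$ whose neighbourhood $N(v_s)=\{v_{j_1},\dots,v_{j_k}\}$ is a clique (this is the recursive structure of Proposition~\ref{prop:chordal} read backwards), so $K:=\{v_s\}\cup N(v_s)$ is a $(k+1)$-clique and $\Delta':=\Delta\smallsetminus\{v_s\}$ is chordal with $c(\Delta)=c(\Delta')+\max(k-1,0)$. Let $H'$ be the chain subgroup of $\Delta'$. Telescoping identities such as $v_{s-1}v_{s+1}^{-1}=w_{s-1}w_s$ show $H'\le H\le G_{\Delta'}$-compatibly, and I would check that under $\rmr_{H,H'}$ the composite $\rmr_{H,H'}\circ\psi_\Delta$ equals $\psi_{\Delta'}$ on $\Lambda_2(\Delta'^\ast)$ while annihilating the new classes $\chi_{j_l}\smallsmile\chi_s$ — the latter because $H'\le G_{\Delta'}=\langle v_t: t\neq s\rangle$, on which $\chi_s$ vanishes. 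Hence $\dim\rmr_{H,H'}(\Img\psi_\Delta)\ge\dim\Img\psi_{\Delta'}\ge c(\Delta')$ by the inductive hypothesis, and by rank–nullity for $\rmr_{H,H'}$ restricted to $\Img\psi_\Delta$ it suffices to produce $k-1$ independent classes in $\Img\psi_\Delta\cap\Ker\rmr_{H,H'}$.

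These classes are the images $\psi_\Delta(\chi_{j_l}\smallsmile\chi_s)=\chi_{j_l}|_H\smallsmile\chi_s|_H$, which already lie in $\Ker\rmr_{H,H'}$; the crux, and the step I expect to be the real obstacle, is the bound $\dim\Span\{\chi_{j_l}|_H\smallsmile\chi_s|_H:1\le l\le k\}\ge k-1$. Here I would exploit the clique $K$: for $v_a,v_b\in K$ the telescoping $v_av_b^{-1}=w_a\cdots w_{b-1}$ places the augmentation sublattice $L:=\langle v_av_b^{-1}\mid v_a,v_b\in K\rangle\cong\Z_p^{\,k}$ of $G_K\cong\Z_p^{\,k+1}$ inside $H$. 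Restricting along $\rmr_{H,L}$ and using \eqref{eq:cup res}, the classes become $\chi_{j_l}|_L\smallsmile\chi_s|_L$ in the exterior algebra $\rmH^\bullet(L)$, where the $\chi_v|_L$ ($v\in K$) span $\rmH^1(L)$ subject to the single relation $\sum_{v\in K}\chi_v|_L=\alpha|_L=0$. Taking $\{\chi_{j_l}|_L\}$ as a basis, so $\chi_s|_L=-\sum_l\chi_{j_l}|_L$, the only relation among the resulting vectors is $\sum_l\chi_{j_l}|_L\smallsmile\chi_s|_L=(-\chi_s|_L)\smallsmile\chi_s|_L=0$; a short computation in $\Lambda_2$ (the coefficient of $\chi_{j_a}|_L\wedge\chi_{j_b}|_L$ forces all scalars equal) shows they are otherwise independent, so their rank is exactly $k-1$ already after restriction to $L$. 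A fortiori the original classes span at least $k-1$ dimensions in $\rmH^2(H)$.

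Combining the two contributions gives $\dim\Img\psi_\Delta\ge c(\Delta')+(k-1)=c(\Delta)$, which closes the induction. The base cases $\dd(\Delta)\le 2$ and the degenerate case $k=0$ (an isolated $v_s$, where no new classes are needed and $c(\Delta)=c(\Delta')$) are immediate. The only genuinely delicate point is the crux of the previous paragraph, and the key idea that makes it robust is that passing to the abelian subgroup $L$ replaces the unknown ring $\rmH^\bullet(H)$ by an exterior algebra in which the single expected linear relation is visible and all others are excluded; this also makes the whole argument independent of the chosen ordering of the vertices of $\Gamma_\alpha$.
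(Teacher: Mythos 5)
Your proof is correct, but it takes a genuinely different route from the paper's. Both arguments exploit chordality and both ultimately rest on Lazard's computation of the cohomology of free abelian pro-$p$ groups, but the paper uses the pasting characterization of Proposition~\ref{prop:chordal} ``forwards'': it writes $\Gamma_\alpha$ as the pasting of two proper induced subgraphs $\Gamma_1,\Gamma_2$ along a common clique, introduces the corresponding subgroups $H_1,H_2$ and $A$ of $H_\alpha$, proves exactness of a three-term Mayer--Vietoris-type sequence comparing $\rmr_{G_\Gamma,H_\alpha}\left(\Lambda_2(\Gamma_\alpha^\ast)\right)$ with $\rmr_{G_\Gamma,H_1}\left(\Lambda_2(\Gamma_\alpha^\ast)\right)\oplus\rmr_{G_\Gamma,H_2}\left(\Lambda_2(\Gamma_\alpha^\ast)\right)$ and with $\rmr_{G_\Gamma,A}\left(\Lambda_2(\Gamma_\alpha^\ast)\right)$ --- the middle exactness requires constructing a preimage by gluing coefficients --- and then obtains the count by induction on the two halves plus inclusion--exclusion on edges, treating complete graphs as the base case and disconnected graphs separately at the end. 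You instead run a perfect-elimination induction: delete one simplicial vertex $v_s$ at a time, apply rank--nullity to $\rmr_{H,H'}$ restricted to $\Img(\psi_\Delta)$, and produce the missing $k-1$ dimensions as explicit classes in the kernel, whose independence is checked after restricting to the rank-$k$ free abelian subgroup $L$ supported on the clique $K$. Your route buys uniformity (connected and disconnected graphs, and the degenerate case $k=0$, are handled identically), avoids the exactness verification that is the most delicate step of the paper's proof, and makes the combinatorial bookkeeping trivial, since each elimination contributes exactly the cycle-rank increment $k-1$; the paper's route buys structural meaning, since its decomposition mirrors the amalgamated free pro-$p$ product decomposition of chordal pro-$p$ RAAGs used elsewhere in the paper. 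Two small repairs to your write-up: the chain of inclusions ``$H'\le H\le G_{\Delta'}$'' is wrong as written ($H$ contains generators involving $v_s$, so it does not lie in $G_{\Delta'}$) --- what you need, and what you actually use, is $H'\le H$ together with $H'\le G_{\Delta'}$; and the existence of a simplicial vertex in a chordal graph is Dirac's lemma, which is standard but does not follow merely by reading Proposition~\ref{prop:chordal} ``backwards'', so it should be invoked as such.
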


\begin{proof}
We proceed as follows: first, we suppose that $\Gamma_\alpha$ is complete, then we suppose that $\Gamma_\alpha$ is connected, and finally we deal with the general case.
 
 If $\Gamma_\alpha$ is complete, then $[w_i,w_j]=1$ for every $1\leq i<j\leq m-1$, and thus $H_\alpha\simeq\Z_p^{m-1}$.
From M.~Lazard's work \cite{lazard}, one knows that the $\Z/p$-cohomology algebra of a free abelian pro-$p$ group is an exterior algebra (see, e.g., \cite[Thm.~5.1.5]{sw:cohom}). 
Thus, $\rmH^\bullet(H_\alpha)$ is the exterior algebra generated by $\calY_\alpha^\ast$, and consequently
 \begin{equation}
  \dim\left(\rmH^2(H_\alpha,\Z/p)\right)=\binom{m-1}{2}=\binom{m}{2}-m+1.
 \end{equation}
Moreover, observe that $\rmH^2(H_\alpha,\Z/p)=\rmr_{G_\Gamma,H_\alpha}(\Lambda_2(\Gamma_\alpha^\ast))$, as 
the map $\res^1_{G_\Gamma,H_\alpha}$ is surjective and $\rmH^2(H_\alpha,\Z/p)=\Lambda_2(\rmH^1(H_\alpha,\Z/p))$.
This proves \eqref{eq:eulerchar} in this case.

Now suppose that $\Gamma_\alpha$ is connected and not complete.
We use the recursive procedure to construct a chordal graph (cf. Proposition~\ref{prop:chordal}): namely, we may find two proper induced subgraphs $\Gamma_1,\Gamma_2$ of $\Gamma_\alpha$ (which are chordal as well), whose intersection is a clique $\Delta$, such that $\Gamma_\alpha$ is the pasting of $\Gamma_1$ and $\Gamma_2$ along $\Delta$.
 Up to renumbering, we may suppose that 
 $$\calV(\Gamma_1)=\{\:v_1,\ldots,v_{m_1}\:\}\qquad\text{and}\qquad \calV(\Gamma_2)=\{\:v_{m_2},\ldots,v_m\:\},$$
with $1< m_2\leq m_1<m$, so that $\calV(\Delta)=\{v_{m_2},\ldots,v_{m_1}\}$.
 Let $H_1$, $H_2$ and $A$ be the subgroups of $H_\alpha$ generated, respectively, by $\{w_1,\ldots,w_{m_1-1}\}$, by $\{w_{m_2},\ldots,w_{m-1}\}$, and by $\{w_{m_2},\ldots,w_{m_1-1}\}$.
 Since $\Delta$ is complete, $A\simeq\Z_p^{m_2-m_1}$ --- so $A$ might be trivial, if $\Delta$ consists only of one vertex.
 Thus, 
 \begin{equation}\label{eq:HDelta H}  
 \rmH^2(A,\Z/p) = \Lambda_2(V_\Delta) = \rmr_{G_\Gamma,A}\left(\Lambda_2(\Gamma_\alpha^\ast)\right) \end{equation}
 where $V_\Delta=\Span\{\:\omega_i\vert_{A}\:\mid\: m_2\leq i<m_1\:\}$.
 Consider the sequence of $\Z/p$-vector spaces
 \begin{equation}\label{eq:sequence res amalg}
  \begin{tikzpicture}[descr/.style={fill=white,inner sep=2pt}]
        \matrix (m) [
            matrix of math nodes,
            row sep=3em,
            column sep=3em,
            text height=1.5ex, text depth=0.25ex
        ]
        {   \rmr_{G_\Gamma,H_\alpha}\left(\Lambda_2(\Gamma_\alpha^\ast)\right)   &  
        \rmr_{G_\Gamma,H_1}\left(\Lambda_2(\Gamma_\alpha^\ast)\right)\oplus\rmr_{G_\Gamma,H_2}\left(\Lambda_2(\Gamma_\alpha^\ast)\right) &
        \\
          &  \Lambda_2(V_\Delta)=\rmr_{G_\Gamma,A}\left(\Lambda_2(\Gamma_\alpha^\ast)\right)  &  0\;, \\
           };
        \path[overlay,->, font=\scriptsize,>=latex]
        (m-1-1) edge node[auto] {$f_1$} (m-1-2) 
        (m-1-2) edge[out=355,in=175] node[descr,yshift=0.3ex] {$f_2$} (m-2-2)
        (m-2-2) edge node[auto] {} (m-2-3) ;
\end{tikzpicture}
 \end{equation}
where $f_1=\rmr_{H_\alpha,H_1}\oplus\rmr_{H_\alpha,H_2}$ and $f_2=\rmr_{H_1,A}-\rmr_{H_2,A}$.
The map $f_2$ is surjective, as the subset $\{\:(\omega_i\vert_{H_k})\smallsmile(\omega_j\vert_{H_k})\:\mid\:m_2\leq i<j<m_1\:\}$ of $\rmH^2(H_k,\Z/p)$ is linearly independent for both $k=1,2$ --- as $[w_i,w_j]=1$ for every $m_2\leq i<j<m_1$ ---, and
$$\rmr_{H_k,A}\left((\omega_i\vert_{H_k})\smallsmile(\omega_j\vert_{H_k})  \right)=(\omega_i\vert_A)\smallsmile(\omega_j\vert_{A}).$$
Moreover, $\Img(f_1)\subseteq\Ker(f_2)$, as $\rmr_{H_k,A}\circ\rmr_{H_\alpha,H_k}=\rmr_{H_\alpha,A}$. 
Finally, let $\beta_1,\beta_2\in\Lambda_2(\Gamma_\alpha^\ast)$ be such that $(\rmr_{G_\Gamma,H_1}(\beta),\rmr_{G_\Gamma,H_2}(\beta_2))\in\Ker(f_2)$, and write
\[
 \beta_1=\sum_{1\leq i<j\leq m}a_{ij}(\omega_i\smallsmile\omega_j)\qquad\text{and}\qquad
 \beta_1=\sum_{1\leq i<j\leq m}b_{ij}(\omega_i\smallsmile\omega_j),
\]
with $a_{ij},b_{ij}\in\Z/p$ --- here we employ $\{\:\omega_1,\ldots,\omega_m\:\}$ as a basis of $\Lambda_1(\Gamma_\alpha^\ast)$.
Since $\rmr_{H_k,A}\circ\rmr_{G_\Gamma,H_k}=\rmr_{G_\Gamma,A}$ for both $k=1,2$, and $\omega_i\vert_A=0$ for $i<m_2$ or $i\geq m_1$, one has
\[
 \rmr_{H_1,A}\left(\rmr_{G_\Gamma,H_k}(\beta_1)\right)=
 \sum_{m_2\leq i<j<m_1}a_{ij}\left((\omega_i\vert_A)\smallsmile(\omega_j\vert_A)\right),
\]
and similarly for $\beta_2$, and therefore $a_{ij}=b_{ij}$ for $m_2\leq i<j<m_1$.
Set 
\[\beta=\sum_{1\leq i<j\leq m}c_{ij}(\omega_i\smallsmile\omega_j)\in\Lambda_2(\Gamma_\alpha^\ast)\qquad \text{with } 
c_{ij}=\begin{cases} a_{ij} &\text{for }j<m_1, \\ b_{ij} & \text{for }i\geq m_2.\end{cases}\]
Then $\rmr_{G_\Gamma,H_k}(\beta)=\rmr_{G_\Gamma,H_k}(\beta_k)$ for both $k=1,2$, and thus 
$(\rmr_{G_\Gamma,H_1}(\beta_1),\rmr_{G_\Gamma,H_2}(\beta_2))=f_1(\rmr_{G_\Gamma,H_\alpha}(\beta))$.

Altogether, the sequence \eqref{eq:sequence res amalg} is exact.
Moreover, observe that for both $k=1,2$ one has 
\[ \rmr_{G_\Gamma,H_k}\left(\Lambda_2(\Gamma_\alpha^\ast)\right)=\rmr_{G_\Gamma,H_k}\left(\Lambda_2(\Gamma_k^\ast)\right),\]
and by induction one has the inequality \eqref{eq:eulerchar} with $H_k$ instead of $H_\alpha$ and $\Gamma_k$ instead of $\Gamma_\alpha$, as $\Gamma_k$ is chordal, and a proper subgraph of $\Gamma$.
Therefore, from the exactness of \eqref{eq:sequence res amalg} and from \eqref{eq:HDelta H} we deduce that
\[\begin{split}
  \dim\left(\rmr_{G_\Gamma,H_\alpha}(\Lambda_2(\Gamma_\alpha^\ast))\right) \geq & 
  \dim\left(\rmr_{G_\Gamma,H_1}(\Lambda_2(\Gamma_\alpha^\ast))\right)+
  \dim\left(\rmr_{G_\Gamma,H_2}(\Lambda_2(\Gamma_\alpha^\ast))\right) - \binom{m_1-m_2}{2} \\
  \geq & \left[|\calE(\Gamma_1)|-(m_1-1)\right]+\left[|\calE(\Gamma_2)|-((m-m_2+1)-1)\right]
  \\& \qquad - \left[|\calE(\Delta)|-((m_1-m_2+1)-1)\right]\\
  = &\; |\calE(\Gamma_\alpha)|-(m-1),
  \end{split} \]
as $\calE(\Gamma_\alpha)=\calE(\Gamma_1)\cup\calE(\Gamma_2)$ and $\calE(\Delta)=\calE(\Gamma_1)\cap\calE(\Gamma_2)$, and $\dim(\Lambda_2(V_\Delta))=\binom{m_1-m_2}{2}$.
Thus, inequality \eqref{eq:eulerchar} holds for $\Gamma_\alpha$ chordal and connected (i.e., with $\rmr(\Gamma_\alpha)=1$).

Finally, let $\Gamma_{\alpha,1},\ldots,\Gamma_{\alpha,r}$ be the connected components of $\Gamma_\alpha$.
Then $$\Lambda_1(\Gamma_\alpha^\ast)=\Lambda_1(\Gamma_{\alpha,1}^\ast)\oplus\ldots\oplus\Lambda_1(\Gamma_{\alpha,r}^\ast).$$
Write $\alpha=\alpha_1+\ldots+\alpha_r$, where $\alpha_i\in \Lambda_1(\Gamma_{\alpha,i}^\ast)$ for each $i=1,\ldots,r$.
Since every connected component is disjoint to each other, one has $\beta\smallsmile\beta'=0$ for every $\beta\in\Lambda_1(\Gamma_{\alpha,i}^\ast)$ and $\beta'\in\Lambda_1(\Gamma_{\alpha,j}^\ast)$ with $1\leq i<j\leq r$, and therefore
\begin{equation}\label{eq:qui sotto}
 \Lambda_1(\Gamma_\alpha^\ast)\smallsmile\alpha=
 \left(\Lambda_1(\Gamma_{\alpha,1}^\ast)\smallsmile\alpha_1\right)\oplus\ldots\oplus
 \left(\Lambda_1(\Gamma_{\alpha,r}^\ast)\smallsmile\alpha_r\right).
\end{equation}
Since \eqref{eq:dim alpha cup alpha prop} holds for each connected component of $\Gamma_\alpha$, by \eqref{eq:qui sotto} it holds also for $\Gamma_\alpha$ itself.
This completes the proof.
 \end{proof}
 
 {  
 \begin{exam}\label{ex:res chordal}\rm
  Let $\Gamma$ be a simplicial chordal graph as in Example~\ref{exam:graph}, with associated pro-$p$ RAAG $G_\Gamma$.
  Set $\alpha=\chi_1+\ldots+\chi_5$ (so that $\Gamma_\alpha=\Gamma$), and let $H_\alpha\subseteq G_\Gamma$ and $\calY_\alpha$ be as above.
  Then in $H_\alpha$ one has the three relations
  \[\begin{split}
 [w_1,w_2]&=\left[v_1v_2^{-1},v_2v_3^{-1}\right]=1,\qquad \text{as }G_{\Delta'}=\langle\:v_1,v_2,v_3\:\rangle\simeq\Z_p^3,\\
 [w_1w_2,w_3]&=\left[ v_1v_3^{-1},v_3v_4^{-1}\right]=1,\qquad \text{as }G_{\Delta}=\langle\:v_1,v_3,v_4\:\rangle\simeq\Z_p^3,\\
 [w_1w_2w_3,w_4]&=\left[v_1v_4^{-1},v_4v_5^{-1}\right]=1,\qquad \text{as }G_{\Delta''}=\langle\:v_1,v_4,v_5\:\rangle\simeq\Z_p^3,
    \end{split}  \]
which are independent in the sense of Proposition~\ref{prop:cupprod}, and induced, respectively, by the relations of $G_{\Delta'}$, of $G_\Delta$, and of $G_{\Delta''}$.
Therefore, 
\[
 \dim\left(\rmr_{G_\Gamma,H_\alpha}(\Lambda_2(\Gamma^\ast))\right)\geq 3=|\calE|-\dd(\Gamma)+\rmr(\Gamma).
\]
In particular, by the proof of Proposition~\ref{prop:res2} the decomposition  \eqref{eq:GGamma chordal 2} yields
\[ \begin{split}
   \dim\left(\rmr_{G_\Gamma,H_\alpha}(\Lambda_2(\Gamma^\ast))\right)&\geq 
   (|\calE(\Gamma_1)|-\dd(\Gamma_1)+1)+(|\calE(\Gamma_2)|-\dd(\Gamma_2)+1) -\dim\left(\Lambda_2(V_\Delta)\right)\\
  &=(5-4+1)+(5-4+1)-\binom{2}{2}=3. 
 \end{split}\]
On the other hand, the subgroups $A_1,A_2$ of $G_\Gamma$ (cf. Example~\ref{ex:GGamma chordal}) induce no relations in $H_\alpha$, as 
$$A_1\cap H_\alpha=\langle\: w_1w_2\:\rangle\qquad\text{and}\qquad A_2\cap H_\alpha=\langle\: w_1w_2w_3\:\rangle,$$
and both are isomorphic to $\Z_p$.
Hence, by the proof of Proposition~\ref{prop:res2}, the decomposition \eqref{eq:GGamma chordal 1} yields
\[\begin{split}
   \dim\left(\rmr_{G_\Gamma,H_\alpha}(\Lambda_2(\Gamma^\ast))\right)&\geq 
   (|\calE(\Gamma_1)|-\dd(\Gamma_1)+1)+(|\calE(\Delta'')|-\dd(\Delta'')+1) -0\\
  &= \left((|\calE(\Delta')|-\dd(\Delta')+1)+(|\calE(\Delta)|-\dd(\Delta)+1)-0\right) + 1-0\\
   &= 1+1+1=3.
  \end{split}\]
\end{exam} }
 
The next proposition gives a lower bound for the dimension of the image of the map $\rmr_{G_\Gamma,H_\alpha}$ restricted to the summand $\Lambda_1(\Gamma^\ast)\smallsmile V_0$ of $\rmH^2(G_\Gamma,\Z/p)$ (cf. \eqref{eq:cohom Gamma sum}).
The idea is the following.
If $v_1,v_2\in\calV(\Gamma_\alpha)$ and $v_3\in\calV\smallsetminus\calV(\Gamma_\alpha)$, and $\{v_1,v_3\},\{v_2,v_3\}\in\calE$, then the commutator 
$$[w_1,w_3]=\left[v_1v_2^{-1},v_3\right]=[v_1,v_3][v_2^{-1},v_3]$$
is trivial: since $w_1,w_3\in\mathcal{Y}$, the relation $[w_1,w_3]=1$ is a defining relation of $H$ induced by the two defining relations $[v_1,v_3]=[v_2,v_3]=1$ of $G_\Gamma$.

\begin{prop}\label{prop:res 1}
Let $\Gamma$ be a simplicial graph.
Then 
\begin{equation}\label{eq:dim res 1}
\dim\left(\rmr_{G_\Gamma,H}(\Lambda_1(\Gamma^\ast)\smallsmile V_0)\right)
\geq |\calE_0|+\sum_{v_j\in\calV_{0,\alpha}}(e(v_j)-1),
  \end{equation}
  where $\calV_{0,\alpha}$ is defined as in Proposition~\ref{prop:dim V0cup}, $\calE_0=\{\{v_i,v_j\}\in\calE\:\mid\:m<i<j\leq d\}$, and $e(v_j)$ is the number of vertices of $\calV(\Gamma_\alpha)$ which are joined to $v_j$.
\end{prop}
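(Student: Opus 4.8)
The strategy is to read off enough defining relations of $H$ and to convert them, via the duality \eqref{eq:H2} and Proposition~\ref{prop:cupprod}, into linearly independent cup-products lying in $\rmr_{G_\Gamma,H}\big(\Lambda_1(\Gamma^\ast)\smallsmile V_0\big)$; the number of relations produced will be exactly the right-hand side of \eqref{eq:dim res 1}. Crucially, this requires no chordality of $\Gamma$, in agreement with the hypotheses of the statement.

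I would first record how the basis $\calV^\ast$ restricts to $H$. From $w_i=v_iv_{i+1}^{-1}$ one gets the exact identity $v_i=w_iw_{i+1}\cdots w_m$ for $1\le i\le m$, while $w_m\notin\calY$ forces $\omega_m\vert_H=0$; a short telescoping computation then gives $\chi_i\vert_H=\omega_i\vert_H-\omega_{i-1}\vert_H$ for $1\le i\le m$ (with the conventions $\omega_0\vert_H=\omega_m\vert_H=0$) and $\chi_j\vert_H=\omega_j\vert_H$ for $j>m$. Equivalently, $\omega_i\vert_H=(\chi_1+\cdots+\chi_i)\vert_H$ for $1\le i<m$; this is the identity that will guarantee that the cup-products produced below lie in $\rmr_{G_\Gamma,H}(\Lambda_1(\Gamma^\ast)\smallsmile V_0)$.

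Next I exhibit the relations. For each $\{v_i,v_j\}\in\calE_0$ (so $m<i<j$, hence $w_i=v_i$ and $w_j=v_j$) the relation $[w_i,w_j]=1$ of $H$ has leading commutator $[w_i,w_j]$, whose associated cup-product is $\omega_i\vert_H\smallsmile\omega_j\vert_H=\rmr_{G_\Gamma,H}(\chi_i\smallsmile\chi_j)$. For each $v_j\in\calV_{0,\alpha}$, list its neighbours in $\Gamma_\alpha$ as $v_{i_1},\dots,v_{i_{e(v_j)}}$ with $i_1<\cdots<i_{e(v_j)}\le m$; for $1\le k<e(v_j)$ the exact identity $v_{i_k}v_{i_{k+1}}^{-1}=w_{i_k}w_{i_k+1}\cdots w_{i_{k+1}-1}$ together with $[v_{i_k},v_j]=[v_{i_{k+1}},v_j]=1$ shows that $[\,w_{i_k}\cdots w_{i_{k+1}-1},\,w_j\,]=1$ is a relation of $H$, whose image in $\Phi(F)/F^{(3)}$ (with $F$ free on $\calY$) is $\sum_{l=i_k}^{i_{k+1}-1}[w_l,w_j]$; its cup-product equals $\big(\sum_{l=i_k}^{i_{k+1}-1}\omega_l\vert_H\big)\smallsmile\omega_j\vert_H$, which by the restriction identities above is $\rmr_{G_\Gamma,H}$ of an element of $\Lambda_1(\Gamma^\ast)\smallsmile V_0$.

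Finally I check that the leading terms just listed are linearly independent in $\Phi(F)/F^{(3)}$, using that the elementary commutators $[w_a,w_b]$ ($a<b$, $a,b\ne m$) are linearly independent there. The $\calE_0$-terms involve only commutators with both indices exceeding $m$, whereas the $\calV_{0,\alpha}$-terms involve only commutators $[w_l,w_j]$ with $l<m<j$; moreover, for a fixed $v_j$ the index intervals $[i_k,i_{k+1}-1]$ are pairwise disjoint, and distinct vertices of $\calV_{0,\alpha}$ contribute commutators with distinct second index. Hence all these leading terms have pairwise disjoint supports and are linearly independent. By the Gauss-reduction remark following Proposition~\ref{prop:cupprod} I may put these relations in the form required by that proposition, obtaining $|\calE_0|+\sum_{v_j\in\calV_{0,\alpha}}(e(v_j)-1)$ linearly independent cup-products; since each resulting pivot commutator is of one of the two types above, each such cup-product lies in $\rmr_{G_\Gamma,H}(\Lambda_1(\Gamma^\ast)\smallsmile V_0)$ by the restriction identities, which establishes \eqref{eq:dim res 1}. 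The delicate points I expect to be the main obstacles are the restriction-of-basis computation (in particular the telescoping collapse $\omega_m\vert_H=0$) and the disjoint-support bookkeeping that guarantees independence; everything else is formal.
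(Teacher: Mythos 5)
Your proof is correct and follows essentially the same route as the paper's: the same two families of relations of $H$ (one relation $[w_i,w_j]=1$ per edge in $\calE_0$, and $e(v_j)-1$ relations $[v_{i_k}v_{i_{k+1}}^{-1},v_j]=1$ per vertex $v_j\in\calV_{0,\alpha}$), the same images modulo $F^{(3)}$, and the same appeal to Proposition~\ref{prop:cupprod} to extract the required number of linearly independent cup-products. Your explicit telescoping identities $\omega_i\vert_H=(\chi_1+\cdots+\chi_i)\vert_H$ (for $i<m$) and $\omega_j\vert_H=\chi_j\vert_H$ (for $j>m$) are a welcome elaboration of the step the paper leaves implicit, namely that the resulting classes indeed lie in $\rmr_{G_\Gamma,H}\left(\Lambda_1(\Gamma^\ast)\smallsmile V_0\right)$.
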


\begin{proof}
 If $\{v_i,v_j\}\in\calE_0$, then $w_i=v_i$ and $w_j=v_j$, and thus one has the relation $[w_i,w_j]=1$ in $H$.

 On the other hand, for $v_j\in\calV_{0,\alpha}$, let $v_{j_1},\ldots,v_{j_{e(v_j)}}$ be the vertices lying in $\calV(\Gamma_\alpha)$ which are joined to $v_j$, with $1\leq j_1<\ldots<j_{e(v_j)}\leq m$.
 Then, by commutator calculus, for each $v_j\in\calV_{0,\alpha}$ one has the $e(v_j)-1$ relations 
 \[
  \begin{split}
   1&=\left[v_{j_1}v_{j_2}^{-1},v_j   \right] = [w_{j_1}\cdots w_{j_2-1},w_j] \\ &=[w_{j_1},w_j]\cdots [w_{j_2-1},w_j]\cdot y_1\\
   &\;\vdots \\
  1 &=\left[v_{j_{e(v_j)-1}}v_{j_{e(v_j)}}^{-1},v_j\right] = [w_{j_{e(v_j)-1}}\cdots w_{j_{e(v_j)}-1},w_j] 
  \\ &= [w_{j_{e(v_j)-1}},w_j]\cdots [w_{j_{e(v_j)}-1},w_j]  \cdot y_{e(v_j)-1}\\
  \end{split} \]
for some $y_1,\ldots,y_{e(v_j)-1}\in H^{(3)}$.

Now let $F$ be the free pro-$p$ group generated by $\calY$, and for every element $x\in F$, let $\bar x$ denote the image of $x$ via the canonical projection $F\to F/F^{(3)}$.
The list of all the relations above satisfies the hypothesis of Proposition~\ref{prop:cupprod}, as their images modulo $F^{(3)}$
give rise to the subset
\[         
\left\{\begin{array}{c} \overline{[w_i,w_j]},\; \text{with }\{v_i,v_j\}\in\calE_0, \\
 \sum_{k=j_1}^{j_2-1}\overline{[w_k,w_j]},\:\ldots,\:\sum_{k=j_{e(v_j)-1}}^{j_{e(v_j)}-1}\overline{[w_k,w_j]},\;\text{with }
 v_j\in\calV_{0,\alpha}
        \end{array}
\right\}              \subseteq \Phi(F)/F^{(3)}                                              
\]
(here we use the additive notation for $\Phi(F)/F^{(3)}$).
Therefore, the set
\[
 \left\{\begin{array}{c} (\omega_i\vert_H)\smallsmile(\omega_j\vert_H),\;\text{with }\{v_i,v_j\}\in\calE_0, \\
 (\omega_{j_1}\vert_H)\smallsmile(\omega_j\vert_H),\:\ldots,\:(\omega_{j_{e(v_j)-1}}\vert_H)\smallsmile(\omega_j\vert_H),\;\text{with }
 v_j\in\calV_{0,\alpha}
        \end{array}
\right\}
\]
is a linearly independent subset of $\rmH^2(H,\Z/p)$, and thus of $\rmr_{G_\Gamma,H}(\Lambda_1(\Gamma^\ast)\smallsmile V_0)$.
This implies \eqref{eq:dim res 1}.
\end{proof}


\subsection{Proof of Theorem~1.2}\label{ssec:thm1}

First, we prove Theorem~\ref{thm:AbsGalType intro}--(i).

\begin{thm}\label{thm:chordal pabsgaltype}
 Let $\Gamma=(\calV,\calE)$ be a simplicial chordal graph.
 Then the associated pro-$p$ RAAG $G_\Gamma$ is of $p$-absolute Galois type.
\end{thm}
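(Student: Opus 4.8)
The plan is to invoke Theorem~\ref{prop:LLSWW}--(i): from Theorem~\ref{thm:massey intro} and Theorem~\ref{prop:LLSWW}--(ii) the sequence \eqref{eq:exactsequence GGamma} is already exact at $\rmH^1(G_\Gamma,\Z/p)$ for every $\alpha$, so it suffices to prove that for every non-trivial $\alpha\in\rmH^1(G_\Gamma,\Z/p)$ the sequence \eqref{eq:exactsequence GGamma} is exact at $\rmH^2(G_\Gamma,\Z/p)$, i.e.\ $\Img(c_\alpha)=\Ker(\rmr_{G_\Gamma,N})$ with $N=\Ker(\alpha)$. First I would reduce to the normalized shape $\alpha=\chi_1+\ldots+\chi_m$ treated in \S\ref{ssec:cup product}--\ref{ssec:res}. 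Writing a general non-trivial $\alpha=\sum_i a_i\chi_i$, the assignment $v_i\mapsto v_i^{c_i}$ with $c_i\in\Z_p^\times$ reducing to $a_i$ modulo $p$ on the support of $\alpha$ (and $c_i=1$ elsewhere) preserves every relation $[v_i,v_j]=1$, hence defines an element of $\Aut(G_\Gamma)$; its action on cohomology carries $\sum_{i\in\mathrm{supp}(\alpha)}\chi_i$ to $\alpha$, commutes with cup products, and maps the kernel of the normalized character onto $N$. Thus exactness for $\alpha$ is equivalent to exactness for a sum of distinct dual generators, and after relabelling the vertices we may assume $\alpha=\chi_1+\ldots+\chi_m$, with $\Gamma_\alpha$ the induced subgraph on $\{v_1,\ldots,v_m\}$.

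By \eqref{eq:complex} the sequence is a complex, so $\Img(c_\alpha)\subseteq\Ker(\rmr_{G_\Gamma,N})$ and exactness reduces to a dimension count. Since the canonical basis $\mathcal{B}^2_{\Gamma^\ast}$ gives $\dim\rmH^2(G_\Gamma,\Z/p)=|\calE|$, it is enough to show $\dim(\Img(c_\alpha))+\dim(\Img(\rmr_{G_\Gamma,N}))\geq|\calE|$. As $H\subseteq N$ and $\rmr_{G_\Gamma,H}$ factors through $\rmr_{G_\Gamma,N}$, one has $\dim(\Img(\rmr_{G_\Gamma,N}))\geq\dim(\Img(\rmr_{G_\Gamma,H}))$, so I would instead bound $\dim(\Img(\rmr_{G_\Gamma,H}))$ from below. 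Using the splitting \eqref{eq:cohom Gamma sum} of $\rmH^2(G_\Gamma,\Z/p)$ as $\Lambda_2(\Gamma_\alpha^\ast)\oplus(\Lambda_1(\Gamma^\ast)\smallsmile V_0)$, Proposition~\ref{prop:res2} (transported from $H_\alpha$ to $H$ along $H_\alpha\subseteq H$, which cannot raise the dimension) controls the restriction of the first summand, and Proposition~\ref{prop:res 1} controls that of the second.

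The crux is that these two contributions are linearly independent inside $\rmH^2(H,\Z/p)$, so that the two lower bounds simply add. Here I would pass to the basis $\calW^\ast$: a telescoping computation gives $\omega_i=\chi_1+\ldots+\chi_i$ for $i\leq m$, so $\omega_m=\alpha$ restricts to $0$ on $H$, while $\chi_j=\omega_j$ for $j>m$. Consequently $\rmr_{G_\Gamma,H}(\Lambda_2(\Gamma_\alpha^\ast))$ lies in the span of the products $\omega_a\vert_H\smallsmile\omega_b\vert_H$ with $a,b<m$, whereas $\rmr_{G_\Gamma,H}(\Lambda_1(\Gamma^\ast)\smallsmile V_0)$ lies in the span of those products in which at least one index exceeds $m$; these are disjoint subsets of the basis of $\rmH^2(H,\Z/p)$, so the two images meet only in $0$. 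Hence $\dim(\Img(\rmr_{G_\Gamma,H}))\geq(|\calE(\Gamma_\alpha)|-\dd(\Gamma_\alpha)+\rmr(\Gamma_\alpha))+(|\calE_0|+\sum_{v_j\in\calV_{0,\alpha}}(e(v_j)-1))$.

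Finally I would add this to $\dim(\Img(c_\alpha))=(\dd(\Gamma_\alpha)-\rmr(\Gamma_\alpha))+|\calV_{0,\alpha}|$ from \eqref{eq:dim Imcalpha}. The terms $\dd(\Gamma_\alpha)$ and $\rmr(\Gamma_\alpha)$ cancel, and since $\sum_{v_j\in\calV_{0,\alpha}}e(v_j)$ counts precisely the edges joining $\Gamma_\alpha$ to its complement, while $\calE(\Gamma_\alpha)$, $\calE_0$ and these cross-edges partition $\calE$, the sum collapses to $|\calE|$. This gives $\dim(\Img(c_\alpha))+\dim(\Img(\rmr_{G_\Gamma,N}))\geq|\calE|$, whence exactness at $\rmH^2(G_\Gamma,\Z/p)$ and the theorem. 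I expect the main obstacle to be the linear-independence step of the previous paragraph, on which the entire count hinges; note that chordality of $\Gamma$ enters exactly through Proposition~\ref{prop:res2}, via the recursive construction of chordal graphs by pasting along cliques.
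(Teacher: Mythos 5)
You follow the paper's strategy step for step --- reduction via Theorem~\ref{prop:LLSWW}--(i), normalization of $\alpha$, the dimension count $\dim(\Img(c_\alpha))+\dim(\Img(\rmr_{G_\Gamma,H}))\geq|\calE|$, and the lower bounds of Propositions~\ref{prop:res2} and~\ref{prop:res 1} --- but the step you yourself single out as the crux is where the argument breaks down. The claim that the two images meet only in $0$ because they lie in spans of ``disjoint subsets of the basis of $\rmH^2(H,\Z/p)$'' presupposes that the products $\omega_a\vert_H\smallsmile\omega_b\vert_H$ (for $a<b$, $a,b\neq m$) are linearly independent, i.e.\ form part of a basis of $\rmH^2(H,\Z/p)$. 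Nothing of the sort is available: $H$ is not known to be a pro-$p$ RAAG, its cohomology is not computed anywhere in the paper, and already in $\rmH^2(G_\Gamma,\Z/p)$ the elements $\omega_a\smallsmile\omega_b$ satisfy many linear relations --- there are $\binom{d-1}{2}$ such products, while $\dim\rmH^2(G_\Gamma,\Z/p)=|\calE|$ can be much smaller; indeed $\omega_a\smallsmile\omega_b=\sum_{i\leq a,\,j\leq b}\chi_i\smallsmile\chi_j$ for $a,b\le m$, and distinct pairs produce heavily overlapping sums. After restriction to $H$, further uncontrolled relations may appear. So the two spans may a priori intersect nontrivially, and the additivity of the two lower bounds, on which the whole count hinges, is not established.

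The paper resolves precisely this point by a different mechanism: instead of separating the two contributions inside $\rmH^2(H,\Z/p)$, it separates them under the further restriction $\rmr_{H,H_\alpha}$. Since every $\alpha'\in V_0$ satisfies $\alpha'\vert_{H_\alpha}=0$, the entire summand $\rmr_{G_\Gamma,H}(\Lambda_1(\Gamma^\ast)\smallsmile V_0)$ is annihilated by $\rmr_{H,H_\alpha}$; on the other hand, by functoriality $\rmr_{H,H_\alpha}$ maps $\rmr_{G_\Gamma,H}(\Lambda_2(\Gamma_\alpha^\ast))$ onto $\rmr_{G_\Gamma,H_\alpha}(\Lambda_2(\Gamma_\alpha^\ast))$, so one may choose a subspace $W_\alpha\subseteq\rmr_{G_\Gamma,H}(\Lambda_2(\Gamma_\alpha^\ast))$ on which $\rmr_{H,H_\alpha}$ restricts to an isomorphism onto $\rmr_{G_\Gamma,H_\alpha}(\Lambda_2(\Gamma_\alpha^\ast))$. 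Then $W_\alpha\cap\rmr_{G_\Gamma,H}(\Lambda_1(\Gamma^\ast)\smallsmile V_0)=0$ comes for free, $\dim(W_\alpha)$ is bounded below by Proposition~\ref{prop:res2} applied over $H_\alpha$, and the two bounds add. Note that your move of transporting Proposition~\ref{prop:res2} from $H_\alpha$ up to $H$ is exactly what destroys this mechanism: the inequality $\dim\rmr_{G_\Gamma,H}(\Lambda_2(\Gamma_\alpha^\ast))\geq\dim\rmr_{G_\Gamma,H_\alpha}(\Lambda_2(\Gamma_\alpha^\ast))$ is true, but it forgets that the separation from the $V_0$-part is witnessed over $H_\alpha$, not over $H$. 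Everything else in your proposal --- the normalization automorphism, the telescoping formula $\omega_i=\chi_1+\ldots+\chi_i$ for $i\leq m$, the two containments of the restricted summands in the respective spans, and the final edge count --- is correct and agrees with the paper.
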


\begin{proof}
Let $\alpha$ be a non-trivial element of $\rmH^1(G_\Gamma,\Z/p)$, and set $N=\Ker(\alpha)$.
Put $\calV=\{v_1,\ldots,v_d\}$ (so that $d=\dd(\Gamma)$), and write
$$\alpha=a_1\chi_1+a_2\chi_2+\ldots+a_d\chi_d,\qquad a_i\in\Z/p.$$
After replacing every generator $v_i$ with $a_i^{-1} v_i$ in $\calV$, if $a_i\neq0,1$, we may assume without loss of generality that $a_i\in\{0,1\}$ for every $i=1,\ldots,d$.
Moreover, after renumbering the vertices of $\Gamma$, we may assume that $\alpha=\chi_1+\ldots+\chi_m$, for some $m\in\{1,\ldots,d\}$.
Finally, let $\Gamma_\alpha=(\calV(\Gamma_\alpha),\calE(\Gamma_\alpha))$ be the induced subgraph of $\Gamma$ with vertices $\calV(\Gamma_\alpha)=\{v_1,\ldots,v_m\}$ (so that $m=\dd(\Gamma_\alpha)$).

By Theorem~\ref{prop:LLSWW}--(i), it sufficies to show that $\Img(c_\alpha)=\Ker(\rmr_{G_{\Gamma},N})$.
In fact, since $\Img(c_\alpha)\subseteq\Ker(\rmr_{G_{\Gamma},N})$ --- as \eqref{eq:exactsequence GGamma} is a complex ---, it is enough to show the left-side inequality in  
\begin{equation}\label{eq:inequality thm 1}
\dim(\Img(c_\alpha))\geq\dim(\Ker(\rmr_{G_\Gamma,N}))= \dim\left(\Lambda_2(\Gamma^\ast)\right)-\dim\left(\Img(\rmr_{G_\Gamma,N})\right).
\end{equation}
Moreover, if $H\subseteq N$ is defined as in \S~\ref{ssec:res}, the functoriality of the restriction map implies that $\dim(\Img(\rmr_{G_\Gamma,N}))\geq \dim(\Img(\rmr_{G_\Gamma,H}))$.
Therefore, showing the inequality
\begin{equation}\label{eq:proof chordal AbsGalType}
\dim\left(\Img(c_\alpha)\right)+\dim\left(\Img(\rmr_{G_\Gamma,H})\right)\geq \dim\left(\Lambda_2(\Gamma^\ast)\right)=|\calE|.
\end{equation}
will prove the left-side inequality in \eqref{eq:inequality thm 1}, and thus the equality $\Img(c_\alpha)=\Ker(\rmr_{G_{\Gamma},N})$.

Let $H_\alpha\subseteq H$ be as defined in \S~\ref{ssec:res}, and let $W_\alpha$ be a subspace of $\rmr_{G_\Gamma,H}(\Lambda_2(\Gamma_\alpha^\ast))$ such that the morphism
$$\rmr_{H,H_\alpha}\vert_{W_\alpha}\colon W_\alpha\longrightarrow\rmr_{G_\Gamma,H_\alpha}(\Lambda_2(\Gamma_\alpha^\ast))$$
is an isomorphism, and let $V_0$ be as in \S~\ref{ssec:cup product}--\ref{ssec:res}.
Then $W_\alpha\cap\rmr_{G_\Gamma,H}(\Lambda_1(\Gamma)\smallsmile V_0)=0$ as $\alpha'\vert_{H_\alpha}=0$ for every $\alpha'\in V_0$, so that $\rmr_{H,H_\alpha}(\alpha''\smallsmile\alpha')=0$ for every $\alpha''\in\Lambda_1(\Gamma^\ast)$.
Hence, 
$$\Img\left(\rmr_{G_\Gamma,H}\right)\supseteq W_\alpha\oplus\rmr_{G_\Gamma,H}(\Lambda_1(\Gamma)\smallsmile V_0),$$
and consequently 
\begin{equation}\label{eq:thm GGamma 3}
\dim\left(\Img(\rmr_{G_\Gamma,H})\right)\geq 
\dim(W_\alpha) + \dim\left(\rmr_{G_\Gamma,H}(\Lambda_1(\Gamma)\smallsmile V_0)\right). 
\end{equation}
Now, by Proposition~\ref{prop:res2} and Proposition~\ref{prop:res 1}, one has
\begin{equation}\label{eq:big sum}
  \begin{split}
 \dim(W_\alpha)+\dim(\rmr_{G_\Gamma,H}(\Lambda_1(\Gamma)\smallsmile V_0))   
  \geq & \left(|\calE(\Gamma_\alpha)|-\dd(\Gamma_\alpha)+\rmr(\Gamma_\alpha)\right) +\\
 &+ \left(|\calE_0|+\sum_{v_j\in\calV_{0,\alpha}}(e(v_j)-1)\right).
 \end{split}
 \end{equation}
Equations \eqref{eq:dim Imcalpha}, \eqref{eq:thm GGamma 3}, and \eqref{eq:big sum}, imply that
\[\begin{split}
   \dim(\Img(c_\alpha))+\dim(\Img(\rmr_{G_\Gamma,H})) &\geq |\calE(\Gamma_\alpha)|+|\calE_0|+|\calV_{0,\alpha}|+\sum_{v_j\in\calV_{0,\alpha}}(e(v_j)-1) \\
   &= |\calE(\Gamma_\alpha)|+|\calE_0|+\sum_{v_j\in\calV_{0,\alpha}}e(v_j)=|\calE|,
  \end{split}\]
and this yields inequality \eqref{eq:proof chordal AbsGalType}.
\end{proof}

\begin{rem}\label{rem:chordalgraphs}\rm
The iterated procedure to construct chordal simplicial graphs (cf. Proposition~\ref{prop:chordal}) makes them --- and the associated pro-$p$ RAAGs, also in the generalized version (see \cite[\S~5.1]{qsv:quadratic} for the definition of {\sl generalized} pro-$p$ RAAG) --- rather special: indeed, by \cite[Prop.~5.22]{qsv:quadratic} a generalized pro-$p$ RAAG associated to a chordal simplicial graph may be constructed by iterating proper amalgamated free pro-$p$ products over uniformly powerful (in some cases, free abelian) subgroups.

\noindent On the one hand, this property is crucial in the proof of Proposition~\ref{prop:res2}; on the other hand this implies that the $\Z/p$-cohomology algebra of a generalized pro-$p$ RAAG associated to a chordal simplicial graph is quadratic (cf. \cite[Rem.~5.25]{qsv:quadratic}) --- notice that, unlike pro-$p$ RAAGs, a generalized pro-$p$ RAAG may yield a non-quadratic $\Z/p$-cohomology algebra (see \cite[Ex.~5.14]{qsv:quadratic}). 

\noindent Finally, the pro-$p$ RAAGs associated to chordal simplicial graphs are precisely those pro-$p$ RAAGs which are {\sl coherent} (cf. \cite[Thm.~1.6]{sz:raags}).
\end{rem}

As stated in Theorem~\ref{thm:AbsGalType intro}, chordal graphs are not the only simplicial graphs yielding pro-$p$ RAAGs of $p$-absolute Galois type.
Let $n$ be a positive integer, and let $\rmQ_n$ be the simplicial graph consisting of a row of $n$ subsequent squares, i.e., $\rmQ_n$ is the graph with geometric realization
 \[
  \xymatrix@R=1.5pt{ v_1 & v_3 & v_5 & v_7 &\cdots & v_{2n-3} & v_{2n-1} & v_{2n+1}  
  \\ \bullet\ar@{-}[r] \ar@{-}[ddd]& \bullet\ar@{-}[r] \ar@{-}[ddd]& \bullet\ar@{-}[r] \ar@{-}[ddd] & 
    \bullet\ar@{.}[rr] \ar@{-}[ddd] && \bullet\ar@{-}[r] \ar@{-}[ddd] &  \bullet\ar@{-}[r] \ar@{-}[ddd] & \bullet\ar@{-}[ddd]  \\  \\  
  \\ \bullet\ar@{-}[r] &\bullet\ar@{-}[r] &\bullet\ar@{-}[r] & \bullet\ar@{.}[rr]  &&\bullet\ar@{-}[r] &\bullet\ar@{-}[r]& \bullet \\
  v_2 & v_4 & v_6 & v_8 &\cdots & v_{2(n-1)} & v_{2n} & v_{2(n+1)} 
  } \]
(for the example displayed in the picture above, $n\geq 3$).
  
Clearly, $\rmQ_n$ is not chordal.
Yet, the structure of such a graph shows a feature similar to the structure of chordal graphs.
Given an induced subgraph $\Gamma'$ of a simplicial graph $\Gamma$, we say that $\Gamma'$ is a {\sl subsquare} of $\Gamma$ if $\Gamma'$ is isomorphic to $\rmQ_1$, and we say that $\Gamma'$ is a {\sl subedge} of $\Gamma$ if $\Gamma'$ consists of two joined vertices, i.e., $\calV(\Gamma')=\{v,w\}$ and $\calE(\Gamma')=\{\{v,w\}\}$.

\begin{lem}\label{lem:subgraphs Q}
 Let $n$ be a positive integer.
 Every connected induced subgraph of $\rmQ_n$ may be constructed recursively by pasting along complete subgraphs (i.e., subgraphs consisting of a single vertex or subedges), starting from single vertices, subedges and subsquares.
\end{lem}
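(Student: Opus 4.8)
The plan is to argue by induction on the number of vertices of a connected induced subgraph $\Gamma'$ of $\rmQ_n$, peeling off one atomic piece at each step and gluing it back along a single vertex or a subedge; this mirrors the recursive description of chordal graphs in Proposition~\ref{prop:chordal}, but with subsquares admitted as extra atomic pieces. First I would fix convenient coordinates: regard $\rmQ_n$ as a ladder with columns $1,\ldots,n+1$, the $k$-th column consisting of a ``top'' vertex $v_{2k-1}$ and a ``bottom'' vertex $v_{2k}$ joined by a rung, and with horizontal edges joining consecutive tops and consecutive bottoms. The atomic pieces in the statement are then exactly a single vertex, a subedge (a rung or a horizontal edge), and a subsquare (two consecutive full columns, which is induced since the ladder has no diagonals).

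The first structural step is to record that the columns met by a connected induced subgraph $\Gamma'$ form a contiguous interval $\{a,\ldots,b\}$: since every edge of $\rmQ_n$ joins vertices in equal or adjacent columns, a path cannot skip a column, so connectivity forces every intermediate column to be occupied. I would also note the companion fact that, for each $a\le k<b$, the subgraph $\Gamma'$ must contain at least one of the two edges joining column $k$ to column $k+1$ (top--top or bottom--bottom), for otherwise $\Gamma'$ would split at that boundary; conversely these edges together with the rungs make the induced subgraph on any interval of occupied columns connected. This last remark is precisely what guarantees that the smaller graphs produced by peeling remain connected.

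For the inductive step I would examine the rightmost occupied column $b$, assuming $\Gamma'$ is not already a single vertex, a subedge, or a subsquare (the base cases). There are three situations. If column $b$ contains a single vertex, that vertex is a leaf---its only neighbour is the corresponding vertex of column $b-1$---so I peel it off as a subedge glued along that single neighbouring vertex. If column $b$ is full but column $b-1$ carries a single vertex, then the vertex of column $b$ lying on the empty side of column $b-1$ is again a leaf, attached only to its rung-partner, and I peel it off as a subedge glued along that partner. Finally, if both columns $b-1$ and $b$ are full, they form a subsquare, which I peel off glued along the rung $\{v_{2(b-1)-1},v_{2(b-1)}\}$ of column $b-1$; here lies the key point, and the reason the subsquare must be an atomic piece: the two ``new'' vertices are diagonal, hence cannot be reattached along a complete subgraph by any finer decomposition. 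In each case the remaining graph $\Gamma''$ is a connected induced subgraph with strictly fewer vertices, so induction closes the argument.

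The bookkeeping to be careful about---and the only real obstacle---is verifying that each decomposition is a genuine pasting in the sense of \S\ref{ssec:graphs}: that the two pieces are proper induced subgraphs whose vertex sets union to $\calV(\Gamma')$ and meet in the prescribed complete subgraph, and, crucially, that $\Gamma'$ has no edge between the peeled piece and the remainder other than those inside the gluing subgraph. This last point follows because the peeled vertices, lying in the last column or being leaves, have all their $\Gamma'$-neighbours either inside the atomic piece or inside the gluing vertex or edge; properness in the inductive step is automatic, since a non-atomic $\Gamma'$ has strictly more vertices than any single atomic piece. Assembling the three cases yields the desired recursive construction and proves the lemma.
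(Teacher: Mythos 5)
Your proof is correct, and it reaches the lemma by a genuinely different route than the paper. The paper argues globally: it collects into a single set $\mathcal{S}$ all subsquares of $\Gamma'$ together with all subedges of $\Gamma'$ not contained in any subsquare, asserts that $\Gamma'$ is the pasting of the members of $\mathcal{S}$, and then records the intersection pattern of this family --- two subsquares meet (if at all) in a subedge, any other pair meets in a vertex, and no member of $\mathcal{S}$ can meet two other members that also meet each other; this tree-like nerve condition is what allows the simultaneous decomposition to be realized as a sequence of pastings. You instead run an induction on the number of vertices, peeling a single atomic piece at each step from the rightmost occupied column: a pendant subedge when that column or its predecessor is half-occupied, or the terminal subsquare when both columns are full. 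The two arguments produce essentially the same decomposition, but yours supplies rigor where the paper is terse (``it is straightforward to see\dots''): you verify explicitly that each peeling is a legitimate pasting (vertex sets, intersection, and the absence of edges between the peeled piece and the remainder outside the gluing subgraph), and --- the point the paper never addresses --- that the remainder stays connected, which in the subsquare case rests on the observation that both vertices of a fully occupied column are automatically joined by a rung because $\Gamma'$ is \emph{induced}, so any path through the deleted column can be rerouted. The paper's version buys brevity and canonicity: its decomposition is choice-free (no preferred direction along the ladder), exhibits all atomic pieces at once, and parallels the characterization of chordal graphs in Proposition~\ref{prop:chordal}. Either construction feeds equally well into the counting argument of Lemma~\ref{lem:squares subsquares}, since your peeling steps are exactly binary pastings along a vertex or a subedge, which is the case analysis that proof performs.
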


\begin{proof}
 Let $\Gamma'$ be an induced subgraph of $\rmQ_n$, and consider the set $\mathcal{S}$ of those subgraphs of $\Gamma'$ which are either subsquares of $\Gamma'$, or subedges of $\Gamma'$ which are not subedges of any subsquare of $\Gamma'$.
 Then $\Gamma'$ may be constructed by pasting together all subgraphs in $\mathcal{S}$. 
 It is straightforward to see that if $\Gamma_1,\Gamma_2,\Gamma_3\in\mathcal{S}$, and $\Gamma_1\cap\Gamma_i\neq\varnothing$ for both $i=2,3$, then $\Gamma_2\cap\Gamma_3=\varnothing$.
 Moreover, one has the following:
 \begin{itemize}
  \item[(a)] if two subsquares belonging to $\mathcal{S}$ have non-trivial intersection, then they past along a common subedge;\item[(b)] if two subedges belonging to $\mathcal{S}$ have non-trivial intersection, then they past along a common vertex;
  \item[(c)] if a subsquare and a subedge of $\Gamma'$, both belonging to $\mathcal{S}$, have non-trivial intersection, then they past along a common vertex.
 \end{itemize}
This completes the proof.  
\end{proof}

\begin{lem}\label{lem:squares subsquares}
Let $n$ be a positive integer, and let $\Gamma'$ be an induced subgraph of $\rmQ_n$.
Moreover, let $\rmq(\Gamma')$ be the number of distinct subsquares of $\Gamma'$.
Then 
\begin{equation}\label{eq:number subsquares}
 |\calE(\Gamma')|-\dd(\Gamma')+\rmr(\Gamma')=\rmq(\Gamma').
\end{equation}
\end{lem}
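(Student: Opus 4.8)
The plan is to read the left-hand side $|\calE(\Gamma')|-\dd(\Gamma')+\rmr(\Gamma')$ as the \emph{cycle rank} (first Betti number) of $\Gamma'$, viewed as a $1$-dimensional complex, and to show it equals the number of subsquares. Both quantities are additive over connected components: a disjoint union adds up edges, vertices, connected components and subsquares separately. Hence it suffices to treat a connected induced subgraph $\Gamma'$, for which $\rmr(\Gamma')=1$, and to prove $|\calE(\Gamma')|-\dd(\Gamma')+1=\rmq(\Gamma')$.

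The key tool is Lemma~\ref{lem:subgraphs Q}: a connected induced subgraph $\Gamma'$ of $\rmQ_n$ is obtained by pasting together the pieces of the family $\mathcal{S}$ appearing in its proof --- the subsquares of $\Gamma'$, together with the single vertices and the subedges lying in no subsquare --- glued along single vertices and subedges. Because the subsquares of $\rmQ_n$ are arranged in a row (two consecutive ones share a vertical subedge while any two non-consecutive ones are disjoint) and each free subedge is attached along a single vertex, the intersection pattern (nerve) of the pieces is a forest, hence a tree for connected $\Gamma'$. I would then order the pieces $P_1,\dots,P_t$ by a traversal of this tree from a root, so that for each $i\geq 2$ the piece $P_i$ meets $\Gamma^{(i-1)}:=P_1\cup\dots\cup P_{i-1}$ in a single clique $\Delta_i$ --- a vertex or a subedge --- namely its intersection with its parent.

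Next I would compute the cycle rank incrementally. For the gluing $\Gamma^{(i)}=\Gamma^{(i-1)}\cup P_i$ along the connected graph $\Delta_i$, inclusion--exclusion for edges and for vertices gives
\[
|\calE(\Gamma^{(i)})|-\dd(\Gamma^{(i)})+1 = \bigl(|\calE(\Gamma^{(i-1)})|-\dd(\Gamma^{(i-1)})+1\bigr) + \bigl(|\calE(P_i)|-\dd(P_i)\bigr) - \bigl(|\calE(\Delta_i)|-\dd(\Delta_i)\bigr).
\]
Since $|\calE(\Delta_i)|-\dd(\Delta_i)=-1$ for a vertex or a subedge, while $|\calE(P_i)|-\dd(P_i)$ equals $-1$ for a vertex or a subedge and $0$ for a subsquare (a $4$-cycle), each subsquare piece raises the cycle rank by exactly $1$ and every other piece leaves it unchanged. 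Therefore the cycle rank of $\Gamma'$ equals the number of subsquare pieces, which is precisely $\rmq(\Gamma')$, as the square pieces of $\mathcal{S}$ are exactly the subsquares of $\Gamma'$.

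The main obstacle I anticipate is justifying rigorously that the pasting is tree-like --- that the nerve of the pieces is a forest, and hence that each new piece meets the already-assembled union in a single clique rather than in several disjoint ones --- which rests on the linear arrangement of subsquares underlying Lemma~\ref{lem:subgraphs Q}. As an alternative that bypasses the ordering, one can argue directly that the subsquares form an $\F_2$-basis of the cycle space of $\Gamma'$, whose dimension is $|\calE(\Gamma')|-\dd(\Gamma')+\rmr(\Gamma')$. Writing $S_1,\dots,S_n$ for the unit squares of $\rmQ_n$, they are linearly independent because the top edge $\{v_{2k-1},v_{2k+1}\}$ of $S_k$ lies in no other square; and they span, because $\rmQ_n$ is planar with bounded faces the $S_k$, so any cycle $z$ of $\Gamma'$ equals the boundary $\sum_{k\in F}S_k$ of the faces it encloses, and for each $k\in F$ both the top edge and the bottom edge of $S_k$ (each bordering only $S_k$) lie in $z\subseteq\calE(\Gamma')$, forcing all four vertices of $S_k$ into $\calV(\Gamma')$ and so making $S_k$ a subsquare of $\Gamma'$. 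Either route yields $|\calE(\Gamma')|-\dd(\Gamma')+\rmr(\Gamma')=\rmq(\Gamma')$.
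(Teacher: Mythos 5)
Your primary argument is, in substance, the paper's own proof: the paper likewise reduces to connected $\Gamma'$ by additivity over components, and then inducts on the pasting decomposition furnished by Lemma~\ref{lem:subgraphs Q}, checking the base cases (single vertex, subedge, subsquare) and performing exactly your inclusion--exclusion count of edges and vertices for a pasting along a single vertex or a subedge. Your one-piece-at-a-time traversal is just a linearization of that two-subgraph recursion, and the ``tree-likeness'' you flag as the main obstacle is precisely the content of Lemma~\ref{lem:subgraphs Q}, so neither you nor the paper needs to re-justify it inside this proof. Your alternative argument, on the other hand, is a genuinely different route not taken in the paper: you identify the left-hand side of \eqref{eq:number subsquares} with $\dim_{\F_2}$ of the cycle space of $\Gamma'$ and show that the subsquares form a basis --- independence because the top edge $\{v_{2k-1},v_{2k+1}\}$ of $S_k$ lies in no other square of $\rmQ_n$, and spanning because planarity writes any cycle $z$ of $\Gamma'$ as $\sum_{k\in F}S_k$, whence for $k\in F$ the top and bottom edges of $S_k$ lie in $z$, forcing all four vertices of $S_k$ into $\calV(\Gamma')$ and (since $\Gamma'$ is \emph{induced}) all of $S_k$ into $\Gamma'$. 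That argument is correct and self-contained; what it buys is that it bypasses the recursive decomposition and the nerve/forest issue entirely, at the modest cost of invoking the standard fact that the bounded face boundaries of a planar graph span its cycle space, whereas the paper's (and your first) route stays purely combinatorial and leans on Lemma~\ref{lem:subgraphs Q}, which is needed elsewhere in the paper anyway.
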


\begin{proof}
 We proceed following the inductive construction of $\Gamma'$, as done in Lemma~\ref{lem:subgraphs Q}.
 If $\Gamma'$ consists of a single vertex, of a couple of joined vertices, or if it is a subsquare, then it is straightforward to see that \eqref{eq:number subsquares} holds.
 
 So, suppose that $\Gamma'$ is the pasting of two proper induced subgraphs $\Gamma_1,\Gamma_2$ along a common subgraph $\Delta$,
 where either $\Delta$ is a single vertex, or a subedge of $\Gamma'$.
 Clearly, $\rmq(\Gamma')=\rmq(\Gamma_1)+\rmq(\Gamma_2)$.
 If $\Delta$ is a single vertex, then
 \[
  |\calE(\Gamma')|=|\calE(\Gamma_1)|+|\calE(\Gamma_2)|\qquad\text{and}
  \qquad\dd(\Gamma')=\dd(\Gamma_1)+\dd(\Gamma_2)-1;
 \]
while if $\Delta$ is a subedge then 
  \[
  |\calE(\Gamma')|=|\calE(\Gamma_1)|+|\calE(\Gamma_2)|-1\qquad\text{and}
  \qquad\dd(\Gamma')=\dd(\Gamma_1)+\dd(\Gamma_2)-2.
 \]
 Therefore, if \eqref{eq:number subsquares} holds for both $\Gamma_1,\Gamma_2$, then it holds also for $\Gamma'$.
 Finally, if $\Gamma_1,\ldots,\Gamma_r$ are the connected components of $\Gamma'$, then
 \[
  |\calE(\Gamma')|=|\calE(\Gamma_1)|+\ldots+|\calE(\Gamma_r)|\qquad\text{and}
  \qquad\dd(\Gamma')=\dd(\Gamma_1)+\ldots+\dd(\Gamma_r),
 \]
 so, if \eqref{eq:number subsquares} holds for all the connected components $\Gamma_1,\ldots,\Gamma_r$, then it holds also for $\Gamma'$.
\end{proof}

We are ready to prove Theorem~\ref{thm:AbsGalType intro}--(ii).
  
\begin{thm}
Let $n$ be a positive integer.
The pro-$p$ RAAG $G_{\rmQ_n}$ is of $p$-absolute Galois type.
\end{thm}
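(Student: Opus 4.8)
The plan is to mimic closely the proof of Theorem~\ref{thm:chordal pabsgaltype}, replacing only the chordality-dependent Proposition~\ref{prop:res2} by an analogue tailored to $\rmQ_n$. As in that proof, I would fix a non-trivial $\alpha\in\rmH^1(G_{\rmQ_n},\Z/p)$, normalise it --- choosing the numbering of the vertices compatibly with the left-to-right structure of $\rmQ_n$ --- to the shape $\alpha=\chi_1+\ldots+\chi_m$, and let $\Gamma_\alpha$ be the induced subgraph on $\{v_1,\ldots,v_m\}$, which is again an induced subgraph of $\rmQ_n$. By Theorem~\ref{prop:LLSWW}--(i) it suffices to prove $\Img(c_\alpha)=\Ker(\rmr_{G_{\rmQ_n},N})$, and since the relevant sequence is a complex this reduces, exactly as in \eqref{eq:proof chordal AbsGalType}, to the numerical inequality
\[
\dim(\Img(c_\alpha))+\dim(\Img(\rmr_{G_{\rmQ_n},H}))\geq|\calE|.
\]
The computation of $\dim(\Img(c_\alpha))$ in \eqref{eq:dim Imcalpha} and the bound of Proposition~\ref{prop:res 1} hold for an arbitrary simplicial graph, so they are available verbatim; the only missing ingredient is the $\rmQ_n$-analogue of Proposition~\ref{prop:res2}.

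Thus the core of the argument is to establish, for every induced subgraph $\Gamma_\alpha$ of $\rmQ_n$,
\[
\dim\left(\rmr_{G_{\rmQ_n},H_\alpha}(\Lambda_2(\Gamma_\alpha^\ast))\right)\geq|\calE(\Gamma_\alpha)|-\dd(\Gamma_\alpha)+\rmr(\Gamma_\alpha)=\rmq(\Gamma_\alpha),
\]
the last equality being Lemma~\ref{lem:squares subsquares}. I would prove this by induction along the recursive construction of $\Gamma_\alpha$ provided by Lemma~\ref{lem:subgraphs Q}, running the same exact-sequence scheme \eqref{eq:sequence res amalg} used in Proposition~\ref{prop:res2}. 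The crucial simplification is that here the pieces are pasted along a single vertex or a subedge, so the amalgamating subgroup $A$ is either trivial or isomorphic to $\Z_p$; in both cases $\Lambda_2(V_\Delta)=0$, the sequence \eqref{eq:sequence res amalg} forces the dimensions to add, and this matches the additivity $\rmq(\Gamma_\alpha)=\rmq(\Gamma_1)+\rmq(\Gamma_2)$ recorded in the proof of Lemma~\ref{lem:squares subsquares}. A single vertex and a subedge give $\rmq=0$ and carry no condition, so the whole inductive step is routine once the remaining base case --- a single subsquare --- is settled.

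The subsquare is where the real work lies, and I expect it to be the main obstacle. For $\Gamma_\alpha=\rmQ_1$ one has $|\calE|-\dd+\rmr=4-4+1=1$, so I must exhibit a single non-zero class in $\rmr_{G_{\rmQ_n},H_\alpha}(\Lambda_2(\rmQ_1^\ast))$. Here $H_\alpha=\langle w_1,w_2,w_3\rangle$ with $w_i=v_iv_{i+1}^{-1}$, and --- unlike the clique base case of Proposition~\ref{prop:res2}, where $H_\alpha$ was free abelian with manifest surviving relations --- the group $H_\alpha$ is now non-abelian (indeed $G_{\rmQ_1}$ is the direct product of two free pro-$p$ groups of rank $2$, and $H_\alpha$ is a non-free ``fibre-product-like'' subgroup), so no individual commutator $[w_i,w_j]$ is trivial and the surviving relation is visible only modulo the third term of the descending $p$-central series. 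Concretely, I would show that $\omega_1\smallsmile\omega_2\neq0$ in $\rmH^2(H_\alpha,\Z/p)$ by proving, via Proposition~\ref{prop:masse unip}, that there is no homomorphism $H_\alpha\to\dbU_3$ whose $(1,2)$- and $(2,3)$-entries are $\omega_1$ and $\omega_2$: writing down the $A$-equivariant extension forced on the free normal subgroup $K=\langle\langle w_2\rangle\rangle$ and computing in the Heisenberg group $\dbU_3$, the compatibility condition imposed by the conjugation action of the remaining generator fails by exactly a unit in the centre $\mathrm{Z}(\dbU_3)$. This non-vanishing yields $\dim(\rmr_{G_{\rmQ_n},H_\alpha}(\Lambda_2(\rmQ_1^\ast)))\geq1$, completing the base case. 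With the analogue of Proposition~\ref{prop:res2} in hand, the final count combining \eqref{eq:dim Imcalpha}, the analogue of \eqref{eq:thm GGamma 3} and \eqref{eq:big sum} telescopes to $|\calE|$ exactly as in Theorem~\ref{thm:chordal pabsgaltype}, and the theorem follows.
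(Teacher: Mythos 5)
Your proposal is sound in outline and follows the paper's own skeleton up to the key reduction: the normalisation of $\alpha$, the appeal to Theorem~\ref{prop:LLSWW}--(i), the inequality \eqref{eq:proof chordal AbsGalType}, the general-graph facts \eqref{eq:dim Imcalpha} and Proposition~\ref{prop:res 1}, and the conversion of the target inequality \eqref{eq:inequality Q} into the subsquare count $\rmq(\rmQ_\alpha)$ via Lemma~\ref{lem:squares subsquares} are all exactly what the paper does. Where you genuinely diverge is in the proof of \eqref{eq:inequality Q} itself. The paper runs \emph{no} induction on the cohomological side: for each subsquare $\rmQ(i)$ of $\rmQ_\alpha$ it exhibits the single explicit relation $[w_{2i-1}w_{2i}w_{2i+1},w_{2i}]=[v_{2i-1}v_{2i+2}^{-1},\,v_{2i}v_{2i+1}^{-1}]=1$, notes that the leading terms $\overline{[w_{2i-1},w_{2i}]}+\overline{[w_{2i+1},w_{2i}]}$ of these $\rmq(\rmQ_\alpha)$ relations modulo $F^{(3)}$ are linearly independent, and gets the independence of the classes $\rmr_{G_{\rmQ},H_\alpha}(\omega_{2i-1}\smallsmile\omega_{2i})$ in one shot from Proposition~\ref{prop:cupprod}; Lemma~\ref{lem:subgraphs Q} is used only inside the combinatorial Lemma~\ref{lem:squares subsquares}. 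Your Mayer--Vietoris-style induction along Lemma~\ref{lem:subgraphs Q} can be made to work and buys a structural additivity statement, but two of your ``routine'' steps hide real content. First, exactness of \eqref{eq:sequence res amalg} was proved in Proposition~\ref{prop:res2} using the interval structure of clique pastings; for vertex/edge pastings you must recheck the patching step $\Ker(f_2)\subseteq\Img(f_1)$ --- it does go through, because $\Lambda_2(V_\Delta)=0$ and no product $\omega_i\smallsmile\omega_j$ restricts non-trivially to both sides, but only after you arrange the square pieces as intervals of the numbering, attach the (possibly non-interval, e.g. $\langle w_kw_{k+1}\rangle$-type) subedge pieces separately where they contribute $0$ by functoriality, and say a word about disconnected $\rmQ_\alpha$, whose components are bridged by the generators $w_{a_k}$. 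Second, your base case is the crux yet heavier than necessary: any obstruction to a lift $H_\alpha\to\dbU_3$ must come from a relation of $H_\alpha$, and the only one available is precisely the paper's identity $[w_1w_2w_3,w_2]=1$, which holds because $w_1w_2w_3=v_1v_4^{-1}$ and $w_2=v_2v_3^{-1}$ lie in the two distinct free factors of $G_{\rmQ_1}\simeq F_2\times F_2$; once this relation is written down, Proposition~\ref{prop:cupprod} already yields $\rmr_{G_{\rmQ},H_\alpha}(\omega_1\smallsmile\omega_2)\neq0$ with no Heisenberg computation, and applying the same identity uniformly to every subsquare is exactly what allows the paper to dispense with the induction altogether.
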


\begin{proof}
Put $\rmQ=\rmQ_n$.
We use the same notation as in \S~\ref{ssec:RAAGs pagt}--\ref{ssec:res}, with $\Gamma=\rmQ$.
Let $\alpha$ be a non-trivial element of $\rmH^1(G_{\rmQ},\Z/p)$ --- as done in the proof of Theorem~\ref{thm:chordal pabsgaltype}, we may assume without loss of generality that 
$$ \alpha=\chi_{a_1}+\ldots\chi_{a_m}\qquad\text{for some }1\leq a_1<\ldots<a_m\leq 2(n+1).$$
By Theorem~\ref{prop:LLSWW}--(i) it sufficies to show that $\Img(c_\alpha)=\Ker(\rmr_{G_{{\rmQ}},N})$.

Let $H,H_\alpha\subseteq G_{\rmQ}$ as defined in \S~\ref{ssec:res}.
We pursue the same strategy as in the proof of Theorem~\ref{thm:chordal pabsgaltype}, in order to prove inequality \eqref{eq:proof chordal AbsGalType} for $\Gamma=\rmQ$.
Let $W_\alpha$ be a subspace of $\rmr_{G_{\rmQ},H}(\Lambda_2(\rmQ_\alpha^\ast))$ as defined in the proof of Theorem~\ref{thm:chordal pabsgaltype}.
Then $$\Img(\rmr_{G_{\rmQ},H})\supseteq W_\alpha\oplus \rmr_{G_{\rmQ},H}(\Lambda_1(\rmQ^\ast)\smallsmile V_0).$$
By \eqref{eq:dim Imcalpha} and by Proposition~\ref{prop:res 1} (which hold for any simplicial graph), it is enough to show that
\begin{equation}\label{eq:inequality Q}
 \dim(\rmr_{G_{\rmQ},H_\alpha}(\Lambda_2(\rmQ_\alpha^\ast)))\geq |\calE(\rmQ_\alpha)|-\dd(\rmQ_\alpha)+\rmr(\rmQ_\alpha).
\end{equation}
Let $\rmQ(i)$ be the $i$-th subsquare of $\rmQ$, i.e., $\rmQ(i)$ is the induced subgraph of $\rmQ$ with vertices 
$\calV(\rmQ(i))=\{v_{2i-1},\ldots,v_{2i+2}\}$.
If $\rmQ(i)$ is a subsquare of $\rmQ_\alpha$, too, then by commutator calculus one has the relation
\[
  1=\left[v_{2i-1}v_{2i+2}^{-1},\:v_{2i}v_{2i+1}^{-1}\right]
  = \left[w_{2i-1} w_{2i} w_{2i+1},\:w_{2i}\right]
  =[w_{2i-1},w_{2i}]\cdot[w_{2i+1},w_{2i}]\cdot y_i ,
\]
for some $y_i\in H_{\alpha}^{(3)}$, as both $v_{2i-1},v_{2i+2}$ commute with both $v_{2i},v_{2i+1}$.
Now let $F$ be the free pro-$p$ group generated by $\{w_{a_1},\ldots,w_{a_{m-1}}\}$, and for $x\in F$ let $\bar x$ denote the image of $x$ via the canonical projection $F\to F/F^{(3)}$.
If $\rmQ(i_1),\ldots\rmQ(i_{\rmq(\rmQ_\alpha)})$ are the subsquares of $\rmQ_\alpha$,
the list of $\rmq(\rmQ_\alpha)$ relations above satisfies the hypothesis of Proposition~\ref{prop:cupprod}, as their images modulo $F^{(3)}$ give rise to the set
\[
 \left\{\:\begin{array}{c} 
 \overline{[w_{2i_1-1},w_{2i_1}]}+\overline{[w_{2i_1+1},w_{2i_1}]}\\
 \vdots\\
\overline{[w_{2i_{\rmq(\rmQ_\alpha)}-1},w_{2i_{\rmq(\rmQ_\alpha)}}]}+ 
\overline{[w_{2i_{\rmq(\rmQ_\alpha)}+1},w_{2i_{\rmq(\rmQ_\alpha)}}]}
 \end{array} \:\right\}\subseteq \Phi(F)/F^{(3)}.
\]
(here we use the additive notation for $\Phi(F)/F^{(3)}$).
Therefore, the set 
\[
 \left\{\:\begin{array}{c} 
 (\omega_{2i_1-1}\vert_{H_\alpha})\smallsmile(\omega_{2i_1}\vert_{H_\alpha})=
 \rmr_{G_{\rmQ},H_\alpha}\left(\omega_{2i_1-1}\smallsmile\omega_{2i_1}\right)\\
 \vdots\\
 (\omega_{2i_{\rmq(\rmQ_\alpha)}-1}\vert_{H_\alpha})\smallsmile(\omega_{2i_{\rmq(\rmQ_\alpha)}}\vert_{H_\alpha})=
 \rmr_{G_{\rmQ},H_\alpha}\left(\omega_{2i_{\rmq(\rmQ_\alpha)}-1}\smallsmile\omega_{2i_{\rmq(\rmQ_\alpha)}}\right)
 \end{array} \:\right\}
\]
is a linearly independent subset of $\rmH^2(H_\alpha,\Z/p)$ --- and, in fact, of 
$\rmr_{G_{\rmQ},H_\alpha}(\Lambda_2(\rmQ_\alpha^\ast))$ ---, of cardinality $\rmq(\rmQ_\alpha)$.
Therefore, $\dim(\rmr_{G_{\rmQ},H_\alpha}(\Lambda_2(\rmQ_\alpha^\ast)))\geq \rmq(\rmQ_\alpha)$, and inequality \eqref{eq:inequality Q} follows by Lemma~\ref{lem:squares subsquares}.
\end{proof}

We were not able to prove any result about graphs containing $n$-cycles, with $n\geq5$, as induced subgraphs.
Still, we suspect that the answer to the following question is positive.

\begin{question}
Let $\Gamma=(\calV,\calE)$ be a simplicial graph.
Is it true that the associated pro-$p$ RAAG of $p$-absolute Galois type if, and only if, $\Gamma$ does not contain $n$-cycles  as induced subgraphs for $n\geq5${\rm?}
\end{question}


\section{More pro-$p$ groups of $p$-absolute Galois type}\label{sec:prod}

\subsection{Free pro-$p$ products}\label{ssec:freeprod}

One knows that for every $n\geq 3$ the $n$-Massey vanishing property is preserved by free pro-$p$ products (cf. \cite[Prop.~4.5]{mt:massey}).
We show that also the property of being of $p$-absolute Galois type is preserved by free pro-$p$ products.

\begin{thm}\label{thm:freeprod abs Gal type}
 Let $G_1,G_2$ be pro-$p$ groups, and let $G$ denote their free pro-$p$ product $G_1\amalg G_2$.
 If $G_1$ and $G_2$ are of $p$-absolute Galois type, then also $G$ is of $p$-absolute Galois type.
\end{thm}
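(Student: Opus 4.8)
The plan is to invoke Theorem~\ref{prop:LLSWW}--(i): it suffices to prove that for every $\alpha\in\rmH^1(G,\Z/p)$ the sequence \eqref{eq:exactsequence intro} is exact at $\rmH^2(G,\Z/p)$, i.e.\ that $\Img(c_\alpha)=\Ker(\res^2_{G,N})$, where $N=\Ker(\alpha)$ and $c_\alpha(\beta)=\beta\smallsmile\alpha$. The inclusion $\Img(c_\alpha)\subseteq\Ker(\res^2_{G,N})$ always holds since \eqref{eq:exactsequence intro} is a complex, so only the reverse inclusion is at stake. If $\alpha=0$ then $N=G$ and the statement is trivial, so I assume $\alpha\neq0$, whence $N$ is open of index $p$ in $G$.

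First I would record the cohomological consequences of the free product structure. For a free pro-$p$ product one has $\rmH^k(G,\Z/p)\cong\rmH^k(G_1,\Z/p)\oplus\rmH^k(G_2,\Z/p)$ for every $k\geq1$, with the restriction maps $\res_{G,G_i}$ realizing the two projections (see \cite[Ch.~9]{ribzal:book}). Writing $\alpha=(\alpha_1,\alpha_2)$ with $\alpha_i=\alpha|_{G_i}$, a short computation with \eqref{eq:cup res} shows that the cross cup-products vanish: a class inflated from $G_1$ and a class inflated from $G_2$ restrict to $0$ on each factor, hence cup to $0$ in $\rmH^2(G,\Z/p)$. Consequently $c_\alpha=c_{\alpha_1}\oplus c_{\alpha_2}$ and
\[
\Img(c_\alpha)=\Img(c_{\alpha_1})\oplus\Img(c_{\alpha_2})\subseteq\rmH^2(G_1,\Z/p)\oplus\rmH^2(G_2,\Z/p).
\]

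Next I would analyse $N$ through the Kurosh subgroup theorem for open subgroups of free pro-$p$ products \cite[Ch.~9]{ribzal:book}. Set $N_i=N\cap G_i=\Ker(\alpha_i)$. Since $N$ is normal, the double cosets $N\backslash G/G_i$ are governed by the image $\alpha(G_i)=\Img(\alpha_i)$ in $G/N\cong\Z/p$: if $\alpha_i\neq0$ there is a single double coset and $N$ acquires the factor $N_i$ (of index $p$ in $G_i$), whereas if $\alpha_i=0$ then $G_i\subseteq N$ and $N$ acquires $p$ conjugate copies ${}^{t^j}G_i$, $0\le j<p$, of $G_i$ (with $t$ any element satisfying $\alpha(t)\neq0$). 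In either case
\[
N\cong\Big(\textstyle\coprod_i\coprod_{x}\,(N\cap{}^xG_i)\Big)\amalg F
\]
with $F$ free pro-$p$, so $\rmH^2(N,\Z/p)$ is the direct sum of the $\rmH^2$ of these factors (the free factor contributing $0$), and each projection of $\rmH^2(N,\Z/p)$ onto a factor is the corresponding restriction map.

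The heart of the argument, and the step I expect to be the main obstacle, is then to identify $\res^2_{G,N}$ block by block. By transitivity of restriction, the component of $\res^2_{G,N}$ landing in $\rmH^2(N\cap{}^xG_i,\Z/p)$ factors as $\res^2_{G_i,N\cap{}^xG_i}$ composed with the projection $\rmH^2(G,\Z/p)\to\rmH^2(G_i,\Z/p)$; moreover restricting a global class to a conjugate subgroup ${}^xG_i$ agrees, through the conjugation isomorphism, with restricting to $G_i$, because inner automorphisms of $G$ act trivially on $\rmH^\bullet(G,\Z/p)$. When $\alpha_i\neq0$ this makes the $i$-block equal to $\res^2_{G_i,N_i}$ on $\rmH^2(G_i,\Z/p)$ and zero on $\rmH^2(G_j,\Z/p)$ for $j\neq i$; when $\alpha_i=0$ the induced map $\rmH^2(G_i,\Z/p)\to\bigoplus_j\rmH^2({}^{t^j}G_i,\Z/p)$ is the diagonal, hence injective, so it contributes nothing to the kernel, consistently with $\Img(c_{\alpha_i})=0$ in that case. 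Assembling the blocks gives
\[
\Ker(\res^2_{G,N})=\Ker(\res^2_{G_1,N_1})\oplus\Ker(\res^2_{G_2,N_2}),
\]
with the convention that $\Ker(\res^2_{G_i,N_i})$ is read as $0$ when $\alpha_i=0$. Finally, since each $G_i$ is of $p$-absolute Galois type, exactness at $\rmH^2(G_i,\Z/p)$ gives $\Ker(\res^2_{G_i,N_i})=\Img(c_{\alpha_i})$ (trivially $0=\Img(c_{\alpha_i})$ when $\alpha_i=0$). Combining this with the displayed decomposition of $\Img(c_\alpha)$ yields $\Ker(\res^2_{G,N})=\Img(c_\alpha)$, the desired exactness, and Theorem~\ref{prop:LLSWW}--(i) completes the proof.
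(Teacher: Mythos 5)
Your proposal is correct and takes essentially the same route as the paper's proof: reduce to exactness at $\rmH^2(G,\Z/p)$ via Theorem~\ref{prop:LLSWW}--(i), use the decomposition $\bfH^\bullet(G)\simeq\bfH^\bullet(G_1)\oplus\bfH^\bullet(G_2)$ to get $\Img(c_\alpha)=\Img(c_{\alpha_1})\oplus\Img(c_{\alpha_2})$, apply the Kurosh subgroup theorem to $N=\Ker(\alpha)$, and conclude $\Ker(\res^2_{G,N})=\Ker(\res^2_{G_1,N_1})\oplus\Ker(\res^2_{G_2,N_2})=\Img(c_{\alpha_1})\oplus\Img(c_{\alpha_2})$ from the hypothesis on $G_1,G_2$. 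The only difference is one of detail: you make explicit the block analysis of the conjugate Kurosh factors (via the triviality of inner automorphisms on cohomology) and the case $\alpha_i=0$, points the paper's proof treats implicitly.
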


\begin{proof}

 Since $G=G_1\amalg G_2$, one has an isomorphism of graded $\Z/p$-algebras 
 \begin{equation}\label{eq:cohomology freeprod}
  \bfH^\bullet(G)\simeq \bfH^\bullet(G_1)\oplus \bfH^\bullet(G_2),
 \end{equation}
which implies that $\alpha_1\smallsmile\alpha_2=0$ for every $\alpha_1\in\rmH^1(G_1,\Z/p)$ and $\alpha_2\in\rmH^1(G_2,\Z/p)$ (cf. \cite[Thm.~4.1.4--4.1.5]{nsw:cohn}).

By Theorem~\ref{prop:LLSWW}--(i), it sufficies to show that \eqref{eq:exactsequence intro} is exact at $\rmH^2(G,\Z/p)$ for every non-trivial $\alpha\in \rmH^1(G,\Z/p)$.
By \eqref{eq:cohomology freeprod} we may write $\alpha=\alpha_1+\alpha_2$, with $\alpha_1=\alpha\vert_{G_1}\in\rmH^1(G_1,\Z/p)$ and $\alpha_2=\alpha\vert_{G_2}\in\rmH^1(G_2,\Z/p)$.
Set $N=\Ker(\alpha)$, and $N_1=\Ker(\alpha_1)=N\cap G_1$ and $N_2=\Ker(\alpha_2)=N\cap G_2$.
By the Kurosh Subgroup Theorem for free pro-$p$ products of pro-$p$ groups (cf. \cite[Thm.~4.2.1]{nsw:cohn}), one has 
\begin{equation}\label{eq:kurosh}
 N=N_1\amalg N_2\amalg \underbrace{\left(\coprod_{x\in\mathcal{S}_1}xN_1x^{-1}\right)\amalg \left(\coprod_{y\in\mathcal{S}_2}yN_2y^{-1}\right)\amalg F}_{H},
\end{equation}
where $\mathcal{S}_1,\mathcal{S}_2$ are finite subsets of non-trivial elements of $G$ (in particular, one has
$xN_1x^{-1}\neq N_1$ and $yN_2y^{-1}\neq N_2$ for every $x\in\mathcal{S}_1$ and $y\in\mathcal{S}_2$), and $F$ is a (possibly trivial) free pro-$p$ group.
Thus, by \eqref{eq:cohomology freeprod} one has 
$$\rmH^2(N,\Z/p)=\rmH^2(N_1,\Z/p)\oplus \rmH^2(N_2,\Z/p)\oplus \rmH^2(H,\Z/p).$$

For $i=1,2$, let $c_i\colon\rmH^1(G_i,\Z/p)\to\rmH^2(G_i,\Z/p)$ denote the map induced by cup-product with $\alpha_i$.
By \eqref{eq:cohomology freeprod}, for every $\alpha'\in\rmH^1(G,\Z/p)$ one has
\[\begin{split}
 c_\alpha(\alpha') &= \left(\alpha'\vert_{G_1}+\alpha'\vert_{G_2}\right)\smallsmile(\alpha_1+\alpha_2) \\
 &= (\alpha'\vert_{G_1}\smallsmile\alpha_1)+(\alpha'\vert_{G_2}\smallsmile\alpha_2) \\
 &=c_1(\alpha'\vert_{G_1})+c_2(\alpha'\vert_{G_2}).
\end{split}\]
Therefore, $\Img(c_\alpha)=\Img(c_1)\oplus\Img(c_2)$.
On the other hand, by \eqref{eq:cohomology freeprod} for every $\beta\in\rmH^2(G,\Z/p)$ one has $\beta=\rmr_{G,G_1}(\beta)+\rmr_{G,G_2}(\beta)$, and since $N_i=N\cap G_i$ for both $i=1,2$, one has 
\[\begin{split}
 \rmr_{G,N}(\beta) &= \rmr_{G,N}\left(\rmr_{G,G_1}(\beta)+\rmr_{G,G_2}(\beta)\right)\\
 &=\rmr_{G_1,N_1}\left(\rmr_{G,G_1}(\beta)\right)+\rmr_{G_2,N_2}\left(\rmr_{G,G_2}(\beta)\right).
\end{split}\]
Consequently, $\Ker(\rmr_{G,N})=\Ker(\rmr_{G_1,N_1})\oplus\Ker(G_2,N_2)$, which is equal to $\Img(c_1)\oplus\Img(c_2)$, as by hypothesis both $G_1,G_2$ are of $p$-absolute Galois type.
This concludes the proof.
\end{proof}

\begin{rem}\rm
 By Theorem~\ref{prop:LLSWW}--(ii), if $G$ is as in Theorem~\ref{thm:freeprod abs Gal type}, then $G$ has the $p$-cyclic Massey vanishing property.
 In fact, employing the universal property of free pro-$p$ products it is easy to prove that also the strong $n$-fold Massey vanishing property, for every $n\geq3$, is preserved by free pro-$p$ products (cf. \cite[Prop.~4.8]{JT:U4}). 
\end{rem}


\subsection{Demushkin groups}\label{ssec:Demushkin}

Recall that a Demushkin group is a pro-$p$ group $G$ satisfying the following:
\begin{itemize}
 \item[(i)] $\dim(\rmH^1(G,\Z/p))<\infty$;
 \item[(ii)] $\rmH^2(G,\Z/p)\simeq\Z/p$;
 \item[(iii)] the cup-product induces a non-degenerate bilinear form\[
            \xymatrix{\rmH^1(G,\Z/p)\times \rmH^1(G,\Z/p)\ar[r]^-{\textvisiblespace\smallsmile\textvisiblespace} & 
            \rmH^2(G,\Z/p)};                  \]
\end{itemize}
cf., e.g., \cite[Def.~3.9.9]{nsw:cohn}.
From condition (ii), one deduces that Demushkin groups have a minimal presentation with only one defining relation (cf. \S~\ref{ssec:cohomology}).
In particular, the only finite Demushkin group $G$ occurs in case $p=2$ and $\dim(\rmH^1(G,\Z/p))=1$, i.e., $G\simeq\Z/2$ (cf. \cite[Prop.~3.9.10]{nsw:cohn}).
For more properties of Demushkin groups we direct the reader to \cite[Ch.~III, \S~9]{nsw:cohn}.

One knows that any Demushkin group has the strong $n$-Massey vanishing property for every $n\geq3$ (cf. \cite[Thm.~3.5]{pal:massey} and \cite[Prop.~4.1]{JT:U4}).
We show that, in addition, any Demushkin group is of $p$-absolute Galois type.

\begin{thm}\label{thm:Demushkin}
 Let $G$ be a Demushkin group.
 Then $G$ is of $p$-absolute Galois type.
\end{thm}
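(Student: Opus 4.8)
The plan is to invoke Theorem~\ref{prop:LLSWW}--(i), which reduces the problem to a single verification: it suffices to show that the sequence \eqref{eq:exactsequence intro} is exact at $\rmH^2(G,\Z/p)$ for every $\alpha\in\rmH^1(G,\Z/p)$, since exactness at $\rmH^1(G,\Z/p)$ (and hence the property of being of $p$-absolute Galois type) follows automatically from that. When $\alpha=0$ the sequence is trivially exact, as already noted before Definition~\ref{defin:p-massey}, so I may assume $\alpha\neq0$ and set $N=\Ker(\alpha)$.

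The heart of the argument exploits condition (ii) in the definition of a Demushkin group, namely that $\rmH^2(G,\Z/p)\simeq\Z/p$ is one-dimensional. Writing $c_\alpha(\beta)=\beta\smallsmile\alpha$ for the middle map of \eqref{eq:exactsequence intro}, I would first observe that the non-degeneracy of the cup-product pairing (condition (iii)) guarantees, for $\alpha\neq0$, the existence of some $\beta\in\rmH^1(G,\Z/p)$ with $\beta\smallsmile\alpha\neq0$. Thus $c_\alpha$ is a nonzero linear map into the one-dimensional space $\rmH^2(G,\Z/p)$, so it is surjective, and $\Img(c_\alpha)=\rmH^2(G,\Z/p)$.

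Next I would use that \eqref{eq:exactsequence intro} is a complex: by \eqref{eq:complex} one has $\res_{G,N}^2(\beta\smallsmile\alpha)=0$ for every $\beta\in\rmH^1(G,\Z/p)$, that is, $\Img(c_\alpha)\subseteq\Ker(\res_{G,N}^2)$. Combining this with the surjectivity of $c_\alpha$ yields the chain of inclusions $\rmH^2(G,\Z/p)=\Img(c_\alpha)\subseteq\Ker(\res_{G,N}^2)\subseteq\rmH^2(G,\Z/p)$, which forces $\Ker(\res_{G,N}^2)=\rmH^2(G,\Z/p)=\Img(c_\alpha)$. This is precisely exactness at $\rmH^2(G,\Z/p)$, completing the proof.

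The main point --- and the reason no genuine obstacle arises --- is that one need not compute $\rmH^2(N,\Z/p)$ nor describe the restriction map explicitly (even though $N$ is open of index $p$ and could itself be analysed as a Demushkin group): the one-dimensionality of $\rmH^2(G,\Z/p)$ together with the non-degeneracy of the cup-product forces the degree-$2$ restriction $\res_{G,N}^2$ to vanish identically, which is exactly what exactness at $\rmH^2(G,\Z/p)$ demands. The only care needed is the reduction to the $\rmH^2$-exactness via Theorem~\ref{prop:LLSWW}--(i), so that the $\rmH^1$-exactness does not have to be checked separately.
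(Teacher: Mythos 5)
Your proposal is correct and follows essentially the same route as the paper: by non-degeneracy of the cup-product and one-dimensionality of $\rmH^2(G,\Z/p)$ the map $c_\alpha$ is surjective for $\alpha\neq0$, whence the complex property \eqref{eq:complex} forces $\Img(c_\alpha)=\Ker(\res_{G,N}^2)=\rmH^2(G,\Z/p)$, and Theorem~\ref{prop:LLSWW}--(i) finishes the argument. The paper's proof adds only a supplementary, explicit verification of exactness at $\rmH^1(G,\Z/p)$ when $\dim\rmH^1(G,\Z/p)$ is even, which is not needed for the theorem itself.
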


\begin{proof}
Condition~(iii) in the definition of Demushkin group implies that for any $\alpha\in\rmH^1(G,\Z/p)$, $\alpha\neq0$, one has
\begin{equation}\label{eq:cup H1H2 Demushkin}
 \alpha\smallsmile\rmH^1(G,\Z/p)=\rmH^2(G,\Z/p).
\end{equation}
Put $N=\Ker(\alpha)$.
Then by \eqref{eq:cup H1H2 Demushkin}, the map $c_\alpha\colon\rmH^1(G,\Z/p)\to\rmH^2(G,\Z/p)$ is surjective.
Thus, by \eqref{eq:complex} one has $\Img(c_\alpha)=\Ker(\rmr_{G,N})$, so that \eqref{eq:exactsequence intro} is exact at $\rmH^2(G,\Z/p)$.
By Theorem~\ref{prop:LLSWW}--(i), this is sufficient to show that $G$ is of $p$-absolute Galois type.
Still, here we provide an explicit proof of the fact that, if $\dim(\rmH^1(G,\Z/p))$ is even, the sequence \eqref{eq:exactsequence intro} is exact at $\rmH^1(G,\Z/p)$ for every $\alpha\in\rmH^1(G,\Z/p)$.
(Recall that if $p\neq2$ then $\dim(\rmH^1(G,\Z/p))$ is necessarily even.)

So, let $G$ be a Demushkin group with $d=\dim(\rmH^1(G,\Z/p))$ even, and pick $\alpha\in\rmH^1(G,\Z/p)$, $\alpha\neq0$.
Since the cup-product induces a non-degenerate bilinear form, $\rmH^1(G,\Z/p)$ decomposes as a direct sum of hyperbolic planes, and thus we may complete $\{\alpha\}$ to a basis $\{\alpha_1=\alpha,\alpha_2,\ldots,\alpha_d\}$ of $\rmH^1(G,\Z/p)$ such that 
\begin{equation}\label{eq:cupprod Demushkin cor}
\alpha_1\smallsmile\alpha_2=\alpha_3\smallsmile\alpha_4=\ldots=\alpha_{d-1}\smallsmile\alpha_d\neq0,\qquad\text{and}\qquad \alpha_i\smallsmile\alpha_j=0\end{equation}
for any other couple of $i,j$ with $1\leq i<j\leq d$.
Thus, $\Ker(c_{\alpha_1})=\Span\{\alpha_1,\alpha_3,\ldots,\alpha_d\}$.

Let $\{y_1,\ldots,y_d\}$ be a minimal generating set of $G$ such that $\alpha_i(y_j)=\delta_{ij}$.
Then by \eqref{eq:cupprod Demushkin cor} one has an equivalence
\begin{equation}\label{eq:rel Demushkin mod F3}
 [y_1,y_2][y_3,y_4]\cdots[y_{d-1},y_d]\cdot \prod_{i=1}^dy_i^{pb_i}\equiv 1\mod F^{(3)},
\end{equation}
for some $b_1,\ldots,b_d\in\Z/p$ (cf. \cite[Prop.~3.9.13--(ii)]{nsw:cohn}).

Set $N=\Ker(\alpha_1)$ --- so, $N$ is generated as a normal subgroup of $G$ by the set $\{y_1^p,y_2,\ldots,y_d\}$. 
Then $N$ is again a Demushkin group, with $$\dim\left(\rmH^1(N,\Z/p)\right)=2+p(d-1)$$ (cf. \cite[Thm.~3.9.15]{nsw:cohn}).
Moreover, \eqref{eq:rel Demushkin mod F3} implies that $[y_2,y_1]\equiv y_1^{pb_1}\bmod\Phi(N)$, and therefore the set 
\[
\calY=\left\{\:y_1^p,\:y_2,\:y_1^\nu y_iy_1^{-\nu}\:\mid\:3\leq i\leq d,\:0\leq \nu\leq p-1\:\right\}
\]
is a minimal generating set of $N$.
Let $\{\psi_1,\psi_2,\psi_{3,0},\psi_{3,1},\ldots,\psi_{d,p-1}\}$ be the basis of $\rmH^1(N,\Z/p)$ dual to $\calY$, and consider $\{1,y_1,\ldots,y_1^{p-1}\}$ as a set of representatives of the quotient $G/N$.
For every $\alpha'\in\rmH^1(N,\Z/p)$ and every $x\in G$ one has the formula
\begin{equation}\label{eq:cor formula}
 \mathrm{cor}_{N,G}^1(\alpha')(x)=\sum_{h=0}^{p-1}\alpha'\left(y^{-h'}xy_1^h\right),
\end{equation}
where $0\leq h'\leq p-1$ is such that $xy_1^hN=y_1^{h'}N$ (cf. \cite[Ch.~I, \S~5.4]{nsw:cohn}).
Then \eqref{eq:cor formula} implies that $\mathrm{cor}_{N,G}^1(\psi_1)=\alpha_1$ and $\mathrm{cor}_{N,G}^1(\psi_{i,\nu})=\alpha_i$ for every $3\leq i\leq d$ and $0\leq \nu\leq p-1$, while $\mathrm{cor}_{N,G}^1(\psi_2)=0$.
Namely, $\Img(\mathrm{cor}_{N,G}^1)=\Ker(c_{\alpha_1})$.
\end{proof}

\begin{rem}\label{rem:Demushkin Galois}\rm
If $\K$ is a $p$-adic local field containing a root of 1 of order $p$, then its maximal pro-$p$ Galois group is a Demushkin group, with $$\dim(\rmH^1(G_{\K}(p),\Z/p))=[\K:\Q_p]+2$$
(cf. \cite[Thm.~7.5.11]{nsw:cohn}).
Also, $\Z/2$ is the maximal pro-$2$ Galois group of $\dbR$.
It is still an open problem to determine whether {\sl any other} Demushkin group occurs as the maximal pro-$p$ Galois group of a field containing a root of 1 of order $p$.
\end{rem}


\subsection{Direct products}\label{ssec:directprod}

{Let $G$ be a pro-$p$ group whose abelianization $G/G'$ is a free abelian pro-$p$ group.
If $p=2$, then the natural map $\rmH^1(G,\Z/4)\to\rmH^1(G,\Z/2)$ is surjective, so that $\alpha\smallsmile\alpha=0$ for every $\alpha\in\rmH^1(G,\Z/2)$ (cf., e.g., \cite[Fact.~7.1]{qw:cyc}; if $p>2$ then one has $\alpha\smallsmile\alpha=0$ trivially).

\begin{prop}\label{prop:cyc torfree}
Let $G$ be a pro-$p$ group whose abelianization $G/G'$ is a free abelian pro-$p$ group.
Then for every $\alpha\in\rmH^1(G,\Z/p)$ and for every $n\geq2$, the $n$-fold Massey product $\langle\alpha,\ldots,\alpha\rangle$ vanishes.
\end{prop}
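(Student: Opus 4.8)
The plan is to reduce everything to the group-theoretic criterion for vanishing Massey products supplied by Proposition~\ref{prop:masse unip}--(ii). Writing the $n$-tuple with $\alpha$ repeated $n$ times, that criterion says the value $\langle\alpha,\ldots,\alpha\rangle$ vanishes as soon as one can exhibit a continuous homomorphism $\rho\colon G\to\dbU_{n+1}$ whose superdiagonal entries $\rho_{i,i+1}$ all equal $\alpha$. So the entire proof amounts to constructing one such $\rho$, and the free abelian hypothesis on $G/G'$ will be used exactly once, to make this construction possible.

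First I would manufacture a single ``universal'' one-parameter subgroup inside $\dbU_{n+1}$. Set $u=I_{n+1}+N$ with $N=\sum_{i=1}^nE_{i,i+1}$, the unipotent matrix carrying $1$'s on the main diagonal and on the first superdiagonal. Since $\dbU_{n+1}$ is a finite $p$-group, $u$ has order $p^s$ for some $s\geq1$, so $t\mapsto u^t$ descends to a continuous homomorphism $\theta\colon\Z_p\to\dbU_{n+1}$ factoring through $\Z_p\twoheadrightarrow\Z/p^s\cong\langle u\rangle$. Because $N^{n+1}=0$ and $N^k$ is the $k$-th superdiagonal of $1$'s, the binomial expansion $u^t=\sum_{k=0}^n\binom{t}{k}N^k$ valid for integral $t$ shows that the $(i,i+1)$-entry of $u^t$ equals $\binom{t}{1}=t$ modulo $p$; since $t\mapsto(u^t)_{i,i+1}$ and $t\mapsto t\bmod p$ are continuous maps $\Z_p\to\Z/p$ agreeing on the dense subset $\Z$, the identity $(u^t)_{i,i+1}=t\bmod p$ persists for every $t\in\Z_p$.

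Now the hypothesis enters. As $\alpha\colon G\to\Z/p$ is trivial on $G'$, it factors through the abelianization $G/G'$, which by assumption is a free abelian pro-$p$ group and hence a projective object in the category of abelian pro-$p$ groups. Lifting $\alpha$ along the epimorphism $\Z_p\twoheadrightarrow\Z/p$ therefore yields a continuous homomorphism $\tilde\alpha\colon G\to\Z_p$ with $\tilde\alpha\bmod p=\alpha$ (equivalently, the reduction $\rmH^1(G,\Z/p^s)\to\rmH^1(G,\Z/p)$ is onto, exactly as in the case $n=2$ recalled just above). I would then put $\rho=\theta\circ\tilde\alpha$. By the entrywise computation of the previous step, for every $g\in G$ the $(i,i+1)$-entry of $\rho(g)=\theta(\tilde\alpha(g))$ is $\tilde\alpha(g)\bmod p=\alpha(g)$, so $\rho_{i,i+1}=\alpha$ for each $i=1,\ldots,n$, and Proposition~\ref{prop:masse unip}--(ii) concludes that $\langle\alpha,\ldots,\alpha\rangle$ vanishes.

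The only genuinely delicate point is the lifting of $\alpha$ to $\tilde\alpha$, and this is precisely where freeness of $G/G'$ is indispensable: as soon as $n+1>p$ the element $u$ has order strictly larger than $p$, so a bare homomorphism $G\to\Z/p$ would not suffice to define $\rho$, and one really needs $\alpha$ to lift past $\Z/p$. Everything else is the formal algebra of the single nilpotent block $N$ together with the finiteness of $\dbU_{n+1}$.
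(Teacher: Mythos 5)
Your proposal is correct and is essentially the paper's own argument: both proofs establish vanishing via Proposition~\ref{prop:masse unip}--(ii) by mapping $G$ into the cyclic subgroup of $\dbU_{n+1}$ generated by the full Jordan block $u=I_{n+1}+\sum_{i}E_{i,i+1}$, with the freeness of $G/G'$ used exactly once to produce such a homomorphism. The only cosmetic difference is in how that homomorphism is packaged: the paper picks $g$ with $\alpha(g)=1$, splits $G/G'=\langle\pi(g)\rangle\times B$ with $B\subseteq\Ker(\bar\alpha)$, and sends $\pi(g)\mapsto u$ and $B\mapsto I_{n+1}$, whereas you lift $\alpha$ to a $\Z_p$-valued character by projectivity of the free abelian quotient and compose with $t\mapsto u^t$ --- the same use of the hypothesis, since choosing such a splitting is precisely how one realizes the lift.
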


\begin{proof}
 By Proposition~\ref{prop:massey cup}--(i), we may suppose that $\alpha\neq0$.
 Let $\pi\colon G\to G/G'$ denote the canonical projection, and let $\bar\alpha\colon G/G'\to\Z/p$ be the morphism such that $\alpha=\bar\alpha\circ\pi$.
 Moreover, pick $g\in G$ such that $\alpha(g)=1$. 
 Then one has $$G/G'=\langle\:\pi(g)\:\rangle\times B,\qquad\text{for some }B\subseteq\Ker(\bar\alpha),$$
 while $\langle\pi(g)\rangle\simeq\Z_p$.
 Let $\rho'\colon G/G'\to\dbU_{n+1}$ be the representation such that
 \[
 \rho'(\pi(g))=\left(\begin{array}{ccccc} 1 & 1 &0 &\cdots &0 \\ & 1 & 1 &\ddots&\vdots \\ && \ddots & \ddots&0 \\ &&& 1&1 \\&&&&1 \end{array}
\right)
\]
and $\rho'\vert_B\equiv I_{n+1}$.
Then, the composition $\rho=\rho'\circ\pi\colon G\to\dbU_{n+1}$ is a homomorphism satisfying $\rho_{i,i+1}=\alpha$ for every $i=1,\ldots,n$, and Proposition~\ref{prop:masse unip} yields the claim.
\end{proof}
}

Let $G_1$ and $G_2$ be two pro-$p$ groups, and let $G=G_1\times G_2$ be their direct product.
Then for the $\Z/p$-cohomology algebra of $G$ one has the following:
\begin{equation}\label{eq:directprod cohom}
 \begin{split}
  \rmH^1(G,\Z/p) &= \rmH^1(G_1,\Z/p)\oplus \rmH^1(G_2,\Z/p),\\
  \rmH^2(G,\Z/p) &= \rmH^2(G_1,\Z/p)\oplus \rmH^2(G_2,\Z/p)\oplus\left(\rmH^1(G_1,\Z/p)\wedge \rmH^1(G_2,\Z/p)\right),
 \end{split}
\end{equation}
(cf. \cite[Ch.~II, \S~4, Thm.~2.4.6 and Ex.~7]{nsw:cohn}).
In particular, if $\{\chi_1,\ldots,\chi_{d_1}\}$ and $\{\psi_1,\ldots,\psi_{d_2}\}$ are bases of $\rmH^1(G_1,\Z/p)$ and $\rmH^1(G_2,\Z/p)$ respectively, then 
$$\left\{\:\chi_i\smallsmile\psi_j\:\mid\:1\leq i\leq d_1,\:1\leq j\leq d_2\:\right\}$$
is a basis of $\rmH^1(G_1,\Z/p)\wedge \rmH^1(G_2,\Z/p)$.

\begin{thm}\label{thm:directprod massey}
 Let $G_1,G_2$ be two pro-$p$ groups with torsion-free abelianization, and set $G=G_1\times G_2$.
\begin{itemize}
 \item[(i)] If both $G_1,G_2$ have the $n$-Massey vanishing property for every $n\geq 3$, then also $G$ has the $n$-Massey vanishing property for every $n\geq 3$.
 \item[(ii)] If both $G_1,G_2$ have the $p$-cyclic Massey vanishing property, then also $G$ has the $p$-cyclic Massey vanishing property.
\end{itemize}
\end{thm}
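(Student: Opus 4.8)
The plan is to detect Massey vanishing through unipotent representations via Dwyer's criterion (Proposition~\ref{prop:masse unip}), and to assemble a representation of $G=G_1\times G_2$ out of representations of the two factors. Write $\dbU=\dbU_{n+1}$. Since $G_1/G_1'$ and $G_2/G_2'$ are free abelian pro-$p$, so is $G/G'$; hence $\alpha\smallsmile\alpha=0$ for every $\alpha\in\rmH^1(G,\Z/p)$, which is exactly what makes the tuples in (i), and the tuple $\alpha,\dots,\alpha,\beta$ in (ii), eligible to be defined. Assume then that $\langle\alpha_1,\dots,\alpha_n\rangle$ is defined and decompose each $\alpha_k=\beta_k+\gamma_k$ according to \eqref{eq:directprod cohom}, where $\beta_k=\alpha_k\vert_{G_1}\in\rmH^1(G_1,\Z/p)$ and $\gamma_k=\alpha_k\vert_{G_2}\in\rmH^1(G_2,\Z/p)$ are viewed inside $\rmH^1(G,\Z/p)$ via the projections $G\to G_i$.

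First I would descend to the factors. Restricting a representation $\bar\rho\colon G\to\bar\dbU$ realising the $\alpha_k$ along the inclusions $G_i\hookrightarrow G$ shows, by Proposition~\ref{prop:masse unip}(i), that $\langle\beta_1,\dots,\beta_n\rangle$ and $\langle\gamma_1,\dots,\gamma_n\rangle$ are defined in $G_1$ and $G_2$ respectively. The $n$-Massey vanishing hypotheses on the factors then force both to vanish, so Proposition~\ref{prop:masse unip}(ii) yields homomorphisms $\rho_1\colon G_1\to\dbU$ and $\rho_2\colon G_2\to\dbU$ with $(\rho_1)_{k,k+1}=\beta_k$ and $(\rho_2)_{k,k+1}=\gamma_k$ for all $k$.

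The core of the argument is to glue $\rho_1$ and $\rho_2$ into a single $\rho\colon G\to\dbU$ with $\rho_{k,k+1}=\alpha_k$, which by Proposition~\ref{prop:masse unip}(ii) is precisely what is wanted. I would build $\rho$ by climbing the central tower $\dbU=\dbU/\gamma_{n+1}(\dbU)\to\cdots\to\dbU/\gamma_2(\dbU)=(\Z/p)^{n}$, whose successive central kernels are the super-diagonals $\gamma_{j+1}(\dbU)/\gamma_{j+2}(\dbU)$, each an elementary abelian $\Z/p$-group. The base map $G\to(\Z/p)^n$ is $(\alpha_1,\dots,\alpha_n)$, and at each further stage the obstruction to lifting lies in $\rmH^2(G,\Z/p)$ (tensored with the relevant super-diagonal). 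By the Künneth splitting \eqref{eq:directprod cohom} this obstruction decomposes into an $\rmH^2(G_1,\Z/p)$-component, an $\rmH^2(G_2,\Z/p)$-component, and a mixed component in $\rmH^1(G_1,\Z/p)\wedge\rmH^1(G_2,\Z/p)$. Restricting the obstruction to $G_1$ and to $G_2$ returns exactly the obstructions to lifting $\rho_1$ and $\rho_2$, which vanish by construction; since $\rmr_{G,G_1}$ and $\rmr_{G,G_2}$ are the projections onto the two pure summands, this forces the obstruction into the mixed part. At the first stage the mixed part is the family $\alpha_k\smallsmile\alpha_{k+1}$, which vanishes because the product is defined (Proposition~\ref{prop:massey cup}(ii)), so the first lift exists.

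The main obstacle is the vanishing of the higher mixed secondary obstructions, and this is where I expect the torsion-free hypothesis to be indispensable. I would exploit two sources of flexibility. On one hand, at each stage the lifts form a torsor under $\rmH^1(G,\Z/p)$, and a change in the lift of an inner entry feeds into the next stage only through cup products with the outer entries $\alpha_1$ and $\alpha_n$; this is the representation-theoretic shadow of the indeterminacy recorded in Proposition~\ref{prop:massey cup}(iii), so it suffices to place each mixed obstruction inside $\alpha_1\smallsmile\rmH^1(G,\Z/p)+\alpha_n\smallsmile\rmH^1(G,\Z/p)$. On the other hand, because $G/G'$ is free abelian I can replace the diagonal data by the explicit Jordan-type representations of Proposition~\ref{prop:cyc torfree}, which I expect both to prevent the repeated-entry blocks from generating fresh mixed classes and to supply the extra homomorphisms absorbing the surviving terms (recall that the mixed part of $\rmH^2(G,\Z/p)$ consists entirely of cup products of degree-one classes). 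Carrying this bookkeeping through every stage is the technical heart of the proof. For part (ii) the same scheme applies verbatim to the tuple $\alpha,\dots,\alpha,\beta$ with $n=p$: restriction produces defined $p$-cyclic tuples on $G_1$ and $G_2$, which vanish by hypothesis; the only cup products entering are $\alpha\smallsmile\alpha=0$ (torsion-free abelianization) and $\alpha\smallsmile\beta=0$ (hypothesis); and, since only two distinct classes occur, the all-$\alpha$ blocks are furnished directly by Proposition~\ref{prop:cyc torfree}, making the mixed bookkeeping considerably lighter.
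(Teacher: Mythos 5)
Your reduction to the factors is fine, but the proof has a genuine gap at what you yourself call its core: the gluing of $\rho_1$ and $\rho_2$ into a homomorphism $\rho\colon G_1\times G_2\to\dbU_{n+1}$. A homomorphism from a direct product to $\dbU_{n+1}$ is exactly a pair of homomorphisms of the factors whose images commute elementwise; the representations $\rho_1,\rho_2$ supplied by the vanishing hypotheses on $G_1,G_2$ come with no such compatibility, and nothing in your proposal produces it. The obstruction-tower scheme does not repair this: the obstruction at each stage depends on the chosen partial lift of $G$, and the restriction to $G_1$ of a partial lift of $G$ need not be a truncation of $\rho_1$ --- this is precisely the indeterminacy of Massey products --- so the claim that restricting the obstruction ``returns exactly the obstructions to lifting $\rho_1$ and $\rho_2$, which vanish by construction'' does not hold as stated. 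The handling of the mixed component, which is where all the content lies, is explicitly deferred (``I expect\dots'', ``carrying this bookkeeping through every stage is the technical heart''), so the argument is not complete.

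What is missing is a structural observation that makes any gluing unnecessary, and it is how the paper actually proceeds. By the K\"unneth decomposition \eqref{eq:directprod cohom}, if $\alpha,\beta\in\rmH^1(G,\Z/p)$ satisfy $\alpha\smallsmile\beta=0$ and do not both lie in the same summand $\rmH^1(G_i,\Z/p)$, then $\beta=a\alpha$ for some $a\in\Z/p$: writing each class as a sum of its two components, the mixed component of $\alpha\smallsmile\beta$ lies in $\rmH^1(G_1,\Z/p)\wedge\rmH^1(G_2,\Z/p)$ and its vanishing forces proportionality. Consequently a tuple with defined Massey product and nonzero entries either lies entirely in $\rmH^1(G_1,\Z/p)$, or entirely in $\rmH^1(G_2,\Z/p)$, or consists of pairwise proportional classes, which after rescaling (Proposition~\ref{prop:massey cup}--(i)) may be taken all equal. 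In the first two cases one restricts, invokes the hypothesis on that factor, and extends the resulting representation of $G_i$ to $G$ by letting the other factor act trivially, so the ``gluing'' is trivial because one of the two representations is trivial. In the remaining case Proposition~\ref{prop:cyc torfree} applies directly --- this is where the torsion-free abelianization is really used --- since the required representation factors through $G/G'$. The same trichotomy disposes of part (ii), where moreover no defining representation of $G$ is available to restrict (only $\alpha\smallsmile\beta=0$ is assumed), so your opening step for (ii) also needs this reformulation. Without the trichotomy your plan stalls exactly at the mixed obstructions.
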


\begin{proof}
First of all, observe that by \eqref{eq:directprod cohom} if $\alpha,\beta\in\rmH^1(G,\Z/p)$ have trivial cup-product $\alpha\smallsmile\beta$, and $\alpha,\beta$ do not lie in the same subgroup $\rmH^1(G_i,\Z/p)$, then necessarily $\beta=a\alpha$ for some $a\in\Z/p$.
In this case, by Proposition~\ref{prop:massey cup}--(i) we may assume that $a=1$ or $a=0$.
Moreover, obviously the abelianization $G/G'\simeq G_1/G_1'\times G_2/G_2'$ of $G$ is a free abelian pro-$p$ group.
\medskip

\noindent(i)\quad Let $\alpha_1,\ldots,\alpha_n$ be a sequence of elements of $\rmH^1(G,\Z/p)$ such that the $n$-fold Massey product $\langle\alpha_1,\ldots,\alpha_n\rangle$ is defined --- by Proposition~\ref{prop:massey cup}, we may assume that $\alpha_i\neq0$ for all $i=1,\ldots,n$.
Then $\alpha_i\smallsmile\alpha_{i+1}=0$ for all $i=1,\ldots,n-1$, and by Proposition~\ref{prop:masse unip}--(i) there exists a representation $\bar\rho\colon G\to\bar\dbU_{n+1}$ such that $\rho_{i,i+1}=\alpha_i$ for all $i=1,\ldots,n$.

If $\alpha_1,\ldots,\alpha_n\in\rmH^1(G_1,\Z/p)$, then the $n$-fold Massey product $\langle\alpha_1\vert_{G_1},\ldots,\alpha_n\vert_{G_2}\rangle$ is defined in $\bfH^\bullet(G_1)$ --- indeed, $\bar\rho\vert_{G_1}\colon G_1\to\bar\dbU_{n+1}$ is a representation satisfying $(\bar\rho\vert_{G_1})_{i,i+1}=\alpha_i\vert_{G_1}$ --- and thus, by hypothesis, $\langle\alpha_1\vert_{G_1},\ldots,\alpha_n\vert_{G_2}\rangle$ vanishes, yielding a representation $\rho'\colon G_1\to\dbU_{n+1}$ satisfying $\rho'_{i,i+1}=\alpha_i\vert_{G_1}$ for every $i=1,\ldots,n$.
Then, the representation $\rho\colon G\to \dbU_{n+1}$ given by $\rho\vert_{G_1}=\rho'$ and $G_2\subseteq \Ker(\rho)$ satisfies $\rho_{i,i+1}=\alpha_i$ for every $i=1,\ldots,n$, and thus $\langle\alpha_1,\ldots,\alpha_n\rangle$ vanishes by Proposition~\ref{prop:masse unip}.
Analogously, if $\alpha_1,\ldots,\alpha_n\in\rmH^1(G_2,\Z/p)$ then $\langle\alpha_1,\ldots,\alpha_n\rangle$ vanishes.

Otherwise, we may assume that $\alpha_1=\ldots=\alpha_n$, and the claim follows by Proposition~\ref{prop:cyc torfree}.
\medskip

\noindent(ii)\quad Pick two non-trivial elements $\alpha,\beta\in\rmH^1(G,\Z/p)$ such that $\alpha\smallsmile\beta=0$.

If $\alpha,\beta\in\rmH^1(G_1,\Z/p)$, then by hypothesis the $p$-fold Massey product $$\langle\alpha\vert_{G_1},\ldots,\alpha\vert_{G_1},\beta\vert_{G_1}\rangle$$ vanishes in $\bfH^\bullet(G_1)$, as 
$$(\alpha\vert_{G_1})\smallsmile(\beta\vert_{G_1})=\rmr_{G,G_1}(\alpha\smallsmile\beta)=0$$ (by the functoriality of the restriction map).
Thus, by Proposition~\ref{prop:masse unip}--(ii) there exists a representation $\rho'\colon G_1\to\dbU_{p+1}$ satisfying $\rho'_{i,i+1}=\alpha$ for $i=1,\ldots,p+1$, and $\rho'_{p,p+1}=\beta$.
As done above, we may define a representation $\rho\colon G\to\dbU_{p+1}$ such that $\rho\vert_{G_1}=\rho'$ and $G_2\subseteq\Ker(\rho)$, so that $\rho_{i,i+1}=\alpha$ for $i=1,\ldots,p-1$ and $\rho_{p,p+1}=\beta$.
Therefore, the $p$-fold Massey product $\langle\alpha,\ldots,\alpha,\beta\rangle$ vanishes in $\bfH^\bullet(G)$.
Analogously, if $\alpha,\beta\in\rmH^1(G_2,\Z/p)$ then the $p$-fold Massey product $\langle\alpha,\ldots,\alpha,\beta\rangle$ vanishes in $\bfH^\bullet(G)$.

Otherwise, if $\beta=a\alpha$ for some $a\in(\Z/p)^\times$, then we may assume that $a=1$, and the $p$-fold Massey product $\langle\alpha,\ldots,\alpha\rangle$ vanishes in $\bfH^\bullet(G)$ by Proposition~\ref{prop:cyc torfree}. 
\end{proof}

The {\sl Elementary Type Conjecture} on maximal pro-$p$ Galois groups, formulated by I.~Efrat, predicts that the maximal pro-$p$ Galois group of a field containing a root of 1 of order $p$ may be constructible --- if it is finitely generated --- starting from free pro-$p$ groups and Demushkin groups (and also the cyclic group of order 2, if $p=2$), and iterating free pro-$p$ products and certain semidirect products with $\Z_p$ (cf. \cite{ido:etc,ido:etc2}, see also \cite[\S~7.5]{qw:cyc}). 
In case of fields containing {\sl all} roots of 1 of $p$-power order, then a finitely generated maximal pro-$p$ Galois group shoud be constructible starting from free pro-$p$ groups and Demushkin groups {\sl with torsion-free abelianization}, iterating free pro-$p$ products and {\sl direct} products with $\Z_p$.
Therefore, from Theorem~\ref{thm:directprod massey}--(ii), together with the aforementioned results contained in \cite[\S~4]{mt:massey}, one deduces the following.

\begin{cor}\label{cor:ETC}
If the Elementary Type Conjecture holds true, then the maximal pro-$p$ Galois group $G_{\K}(p)$ of a field $\K$ containing all roots of 1 of $p$-power order has the strong $n$-Massey vanishing property for every $n\geq3$, provided that $G_{\K}(p)$ is finitely generated.
\end{cor}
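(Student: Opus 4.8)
The plan is to argue by structural induction along the recursive construction predicted by the Elementary Type Conjecture. Assuming the conjecture, and given that $\K$ contains all roots of $1$ of $p$-power order and that $G=G_{\K}(p)$ is finitely generated, the group $G$ is obtained from free pro-$p$ groups and Demushkin groups with torsion-free abelianization by iterating free pro-$p$ products $\amalg$ and direct products with $\Z_p$. I would therefore consider the class $\mathcal{C}$ of pro-$p$ groups $H$ such that $H/H'$ is a free abelian pro-$p$ group and $H$ has the strong $n$-Massey vanishing property for every $n\geq3$, and show that $\mathcal{C}$ contains the two families of building blocks and is closed under the two operations; since $G$ lies in the class generated this way, it will follow that $G\in\mathcal{C}$. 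Note that torsion-free abelianization is automatically preserved, as $(H_1\amalg H_2)^{\ab}\simeq(H_1\times H_2)^{\ab}\simeq H_1^{\ab}\times H_2^{\ab}$ and a product of free abelian pro-$p$ groups is free abelian; this is precisely the hypothesis needed to invoke Proposition~\ref{prop:cyc torfree} and Theorem~\ref{thm:directprod massey} at every stage.

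For the building blocks: a free pro-$p$ group $F$ has $\rmH^2(F,\Z/p)=0$, so every defined Massey product automatically vanishes, while definedness is unobstructed (the superdiagonal entries of a homomorphism into $\bar\dbU_{n+1}$ are arbitrary homomorphisms $F\to\Z/p$, which can be prescribed freely on generators), whence $F$ has the strong $n$-Massey vanishing property for all $n$; a Demushkin group has the strong $n$-Massey vanishing property for every $n\geq3$ by \cite[Thm.~3.5]{pal:massey} together with \cite[Prop.~4.1]{JT:U4}. For the free pro-$p$ product step I would invoke the fact, recorded in the remark following Theorem~\ref{thm:freeprod abs Gal type} (cf.\ \cite[Prop.~4.8]{JT:U4}), that the strong $n$-Massey vanishing property is preserved by $\amalg$ for every $n\geq3$.

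The heart of the argument is the direct-product step, where one must upgrade Theorem~\ref{thm:directprod massey}--(i) --- which yields only the ordinary $n$-Massey vanishing property --- to the strong version; by Remark~\ref{rem:cupdef} it suffices to show that, for $G_1,G_2$ with torsion-free abelianization enjoying the cup-defining $n$-fold Massey product property, the product $G=G_1\times G_2$ again has it. Given $\alpha_1,\dots,\alpha_n\in\rmH^1(G,\Z/p)$ with $\alpha_i\smallsmile\alpha_{i+1}=0$ for all $i$, I would decompose each class via \eqref{eq:directprod cohom} and reuse the observation opening the proof of Theorem~\ref{thm:directprod massey}: two classes with trivial cup product that do not both lie in the same $\rmH^1(G_j,\Z/p)$ must be proportional. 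Propagating this along the chain forces the tuple into exactly one of three mutually exclusive cases: all $\alpha_i$ lie in $\rmH^1(G_1,\Z/p)$, all lie in $\rmH^1(G_2,\Z/p)$, or all are scalar multiples of a single class $\alpha$ (the last occurring in particular as soon as some $\alpha_i$ is mixed). In the first two cases, functoriality of restriction gives $\alpha_i|_{G_j}\smallsmile\alpha_{i+1}|_{G_j}=0$, the cup-defining property of $G_j$ produces a representation $\bar\rho'\colon G_j\to\bar\dbU_{n+1}$, and extending by the trivial map on the other factor yields $\bar\rho\colon G\to\bar\dbU_{n+1}$ with $\bar\rho_{i,i+1}=\alpha_i$, so by Proposition~\ref{prop:masse unip}--(i) the product is defined (and vanishes, by Theorem~\ref{thm:directprod massey}--(i)); in the third case, Proposition~\ref{prop:cyc torfree} shows $\langle\alpha,\dots,\alpha\rangle$ vanishes and Proposition~\ref{prop:massey cup}--(i) transfers definedness and vanishing to $\langle a_1\alpha,\dots,a_n\alpha\rangle$, which is exactly where torsion-freeness of the abelianization enters.

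I expect the main obstacle to be this last step, and more precisely the bookkeeping showing that \emph{definedness} (not merely vanishing of an already-defined product) is preserved in the mixed case. The subtlety is that Theorem~\ref{thm:directprod massey} is stated for vanishing of defined Massey products, whereas the cup-defining property demands constructing the representation into $\bar\dbU_{n+1}$ directly from the hypothesis $\alpha_i\smallsmile\alpha_{i+1}=0$; verifying that the three-case dichotomy is genuinely exhaustive --- in particular, that a single mixed entry forces the entire tuple to be proportional and that no adjacent transition between the two factors can occur without proportionality --- is the only essentially new piece of reasoning. Once it is in place, the class $\mathcal{C}$ is closed under both operations, the structural induction closes, and $G\in\mathcal{C}$, giving the strong $n$-Massey vanishing property for every $n\geq3$.
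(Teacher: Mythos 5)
Your overall strategy is exactly the paper's: decompose $G_{\K}(p)$ along the Elementary Type Conjecture into free pro-$p$ groups and Demushkin groups with torsion-free abelianization, combined by iterated free pro-$p$ products and direct products with $\Z_p$, and check that the strong $n$-Massey vanishing property survives each stage. The paper compresses this into a one-sentence citation of Theorem~\ref{thm:directprod massey} and \cite[\S~4]{mt:massey}, with the building blocks and the free-product step covered by \cite[Thm.~3.5]{pal:massey} and \cite[Prop.~4.1, Prop.~4.8]{JT:U4}; your treatment of those parts is correct, and you are also right that the only step requiring genuinely new work is upgrading Theorem~\ref{thm:directprod massey}--(i) from the plain to the strong vanishing property (the paper never writes this upgrade out).

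That upgrade, however, is where your argument has a genuine gap: the three-case dichotomy is not exhaustive, because the strong property quantifies over \emph{all} tuples, including those with some $\alpha_i=0$. Concretely, take $\alpha_1\in\rmH^1(G_1,\Z/p)$ and $\alpha_3\in\rmH^1(G_2,\Z/p)$ both non-trivial and $\alpha_2=0$: then $\alpha_1\smallsmile\alpha_2=\alpha_2\smallsmile\alpha_3=0$, yet the triple lies in none of your three cases, since the entries neither lie in a common factor nor are all proportional to a single class ($\alpha_1$ and $\alpha_3$ are linearly independent). You cannot reduce to non-zero entries via Proposition~\ref{prop:massey cup}--(i), because that proposition presupposes the Massey product is already defined, while definedness is precisely what the cup-defining property demands; nor can you perturb the zero entry to a non-zero one, since any non-zero $\beta$ with $\alpha_1\smallsmile\beta=\beta\smallsmile\alpha_3=0$ would have to be proportional to both $\alpha_1$ and $\alpha_3$, which is impossible. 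Note also that a Massey product with a zero entry is not automatically defined for a general pro-$p$ group: a homomorphism $G\to\bar\dbU_{n+1}$ with first superdiagonal entry $0$ restricts, on the lower-right block, to an honest homomorphism into $\dbU_n$, so definedness of $\langle 0,\alpha_2,\ldots,\alpha_n\rangle$ forces $\langle\alpha_2,\ldots,\alpha_n\rangle$ to vanish.

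The missing ingredient is a splitting argument at the zero entries, after which your induction closes. Argue by induction on $n$: for zero-free tuples your propagation/trichotomy is valid (a pure entry of one factor adjacent to a pure entry of the other gives a non-zero class in the K\"unneth summand $\rmH^1(G_1,\Z/p)\wedge\rmH^1(G_2,\Z/p)$, and a mixed entry forces proportionality on both sides), and the three cases are handled as you describe. For a tuple containing zeros, its maximal zero-free segments are shorter tuples still satisfying the adjacent-cup-vanishing hypothesis, so each segment admits a vanishing representation into the unipotent group of the appropriate size (lengths $1$ and $2$ being trivial, lengths $\geq 3$ by induction); the block-diagonal homomorphism $G\to\dbU_{n+1}$ assembled from these blocks, with identity $1\times1$ blocks at the zero positions, is a homomorphism with the prescribed superdiagonal, since $E_{ab}E_{cd}=0$ whenever $a,b$ and $c,d$ lie in disjoint index blocks. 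With this supplement the direct-product step, and hence the corollary, is proved; without it, the step you yourself single out as "the only essentially new piece of reasoning" fails on tuples such as $(\alpha_1,0,\alpha_3)$ above.
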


In other words, Efrat's Elementary Type Conjecture implies a strengthened version of Mina\v{c}-T\^an's conjecture \cite[Conj.~1.1]{mt:conj} --- formulated again by Mina\v{c} and T\^an in \cite{JT:U4} (cf. Remark~\ref{rem:cupdef}, see also \cite{pal:massey}) --- for fields containing all roots of 1 of $p$-power order with finitely generated maximal pro-$p$ Galois group.

\begin{thm}\label{thm:directprod pabsgaltype}
 Let $G_1,G_2$ be two finitely generated pro-$p$ groups with torsion-free abelianization and such that
 $\rmH^2(G_i,\Z/p)=\rmH^1(G_i,\Z/p)\smallsmile\rmH^1(G_i,\Z/p)$ for both $i=1,2$.
If $G_1$ and $G_2$ are of $p$-absolute Galois type, then also the direct product $G_1\times G_2$ is of $p$-absolute Galois type.
\end{thm}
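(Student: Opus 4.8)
The plan is to reduce, via Theorem~\ref{prop:LLSWW}--(i), to proving that the sequence \eqref{eq:exactsequence intro} is exact at $\rmH^2(G,\Z/p)$ for every non-trivial $\alpha\in\rmH^1(G,\Z/p)$, where $G=G_1\times G_2$ and $N=\Ker(\alpha)$. Since $\Img(c_\alpha)\subseteq\Ker(\rmr_{G,N})$ always holds (the sequence is a complex, cf.~\eqref{eq:complex}), it suffices to prove the reverse inclusion $\Ker(\rmr_{G,N})\subseteq\Img(c_\alpha)$. Throughout I use the decomposition \eqref{eq:directprod cohom} of $\rmH^\bullet(G)$; the hypothesis $\rmH^2(G_i)=\rmH^1(G_i)\smallsmile\rmH^1(G_i)$ forces $\rmH^2(G)=\rmH^1(G)\smallsmile\rmH^1(G)$, so every class is a sum of cup-products and restriction maps can be computed degreewise through \eqref{eq:cup res}. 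Writing $\alpha=\alpha_1+\alpha_2$ with $\alpha_i=\alpha|_{G_i}$, I distinguish whether one of the $\alpha_i$ vanishes.

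Suppose first that $\alpha_1=0$ (the case $\alpha_2=0$ is symmetric); then $\alpha=\alpha_2\neq0$ and $N=G_1\times N_2$ with $N_2=\Ker(\alpha_2)$, again a direct product. Here both $\rmr_{G,N}$ and $c_\alpha$ respect \eqref{eq:directprod cohom}: on the $\rmH^2(G_1)$--summand $\rmr_{G,N}$ is injective and $c_\alpha$ is zero; on $\rmH^2(G_2)$ they become $\rmr_{G_2,N_2}$ and $\beta\mapsto\beta\smallsmile\alpha_2$; and on $\rmH^1(G_1)\wedge\rmH^1(G_2)$ both have image, respectively kernel, equal to $\rmH^1(G_1)\wedge\langle\alpha_2\rangle$, since the degree-one restriction $\rmH^1(G_2,\Z/p)\to\rmH^1(N_2,\Z/p)$ has kernel $\langle\alpha_2\rangle$. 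Comparing summands, the equality $\Ker(\rmr_{G,N})=\Img(c_\alpha)$ reduces exactly to $\Ker(\rmr_{G_2,N_2})=\{\beta\smallsmile\alpha_2\}$, which is the exactness at $\rmH^2(G_2)$ furnished by $G_2$ being of $p$-absolute Galois type.

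Now suppose $\alpha_1,\alpha_2\neq0$. After changing minimal generating sets I may assume $\alpha_1=\chi_1$ and $\alpha_2=\psi_1$, where $\{\chi_i\},\{\psi_j\}$ are dual to minimal generating sets $\{x_i\}$ of $G_1$ and $\{y_j\}$ of $G_2$. The key construction uses the hypothesis that $G_i/G_i'$ is free abelian: its rank equals the minimal number of generators, so every defining relation of $G_i$ lies in the commutator subgroup of the ambient free pro-$p$ group and thus has zero exponent-sum in each generator. Consequently the assignments $s(x_1)=x_1y_1^{-1}$, $s(x_i)=x_i$ $(i\geq2)$ and $s'(y_1)=y_1x_1^{-1}$, $s'(y_j)=y_j$ $(j\geq2)$ extend to homomorphisms $s\colon G_1\to G$ and $s'\colon G_2\to G$ (the central factors $y_1$, resp.\ $x_1$, in $\langle G_1,y_1\rangle$, resp.\ $\langle G_2,x_1\rangle$, can be pulled out and cancel). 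As $\mathrm{pr}_i\circ s=\id$, these are isomorphisms onto subgroups $H_1=s(G_1)$ and $H_2=s'(G_2)$ of $N$; I also use $H_0=\langle x_i,y_j:i,j\geq2\rangle\cong\langle x_{\geq2}\rangle\times\langle y_{\geq2}\rangle\subseteq N$. Computing the degree-one restrictions (through $s$ one gets $\chi_i\mapsto\chi_i$, $\psi_1\mapsto-\chi_1$, $\psi_j\mapsto0$, and dually for $s'$) and then, via quadraticity and \eqref{eq:cup res}, the degree-two restrictions, one finds for $\xi=\xi^{(1)}+\xi^{(2)}+\eta\in\Ker(\rmr_{G,N})$ that $\rmr_{G,H_1}(\xi)=0$ gives $\xi^{(1)}=\gamma_1\smallsmile\chi_1$, that $\rmr_{G,H_2}(\xi)=0$ gives $\xi^{(2)}=\psi_1\smallsmile\gamma_2$, and that $\rmr_{G,H_0}(\xi)=0$ kills the ``purely mixed'' part of $\eta$, i.e.\ all coefficients of $\chi_i\wedge\psi_j$ with $i,j\geq2$. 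The classes $\gamma_1,\gamma_2$ together with the surviving coefficients of $\eta$ then assemble, by a direct linear-algebra verification, into an element $\beta\in\rmH^1(G)$ with $c_\alpha(\beta)=\xi$, proving $\xi\in\Img(c_\alpha)$.

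I expect the main obstacle to be this last case, and specifically the verification that $s,s'$ are well defined and that the three restriction maps $\rmr_{G,H_1},\rmr_{G,H_2},\rmr_{G,H_0}$ take the clean form above on all three summands of \eqref{eq:directprod cohom}. This is where the hypothesis on torsion-free abelianization (needed to build $s,s'$) and the hypothesis $\rmH^2(G_i)=\rmH^1(G_i)\smallsmile\rmH^1(G_i)$ (needed to compute degree-two restrictions from degree one) are both essential, together with finite generation, which ensures that all the relevant cohomology groups are finite-dimensional. It is worth emphasizing that, unlike the first case, this mixed case does not invoke the $p$-absolute Galois type of the factors at all: when $\alpha_1,\alpha_2\neq0$ the interaction between the two factors is rigid enough that exactness at $\rmH^2(G)$ follows from the product structure alone.
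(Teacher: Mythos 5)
Your proof is correct, and structurally it tracks the paper's: the reduction via Theorem~\ref{prop:LLSWW}--(i), the case split according to whether $\alpha$ restricts trivially to one factor, and even the subgroups of $N$ coincide --- up to renumbering the generators, your $H_1=s(G_1)$ and $H_2=s'(G_2)$ are exactly the paper's subgroups generated by $\mathcal{Z}_1=\{x_1,\ldots,x_{d_1-1},z\}$ and $\mathcal{Z}_2=\{y_1,\ldots,y_{d_2-1},z\}$ with $z=x_{d_1}y_{d_2}^{-1}$, and your $H_0$ is contained in the paper's $H=\langle\mathcal{Z}_1\cup\mathcal{Z}_2\rangle$. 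The first case is handled identically in both. In the mixed case, however, the mechanisms genuinely differ. The paper runs a dimension count: it exhibits a linearly independent subset of $\Img(c_\alpha)$ of size $d_1+d_2-1$ and a linearly independent subset of $\Img(\rmr_{G,H})$ of size $\dim\rmH^2(G_1,\Z/p)+\dim\rmH^2(G_2,\Z/p)+(d_1-1)(d_2-1)$ (via Proposition~\ref{prop:cupprod} and the isomorphisms $H_i\simeq G_i$), and these two bounds sum to $\dim\rmH^2(G,\Z/p)$, which together with \eqref{eq:complex} forces $\Img(c_\alpha)=\Ker(\rmr_{G,N})$ without ever describing either space. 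You instead identify the kernel element-by-element: writing $\xi=\xi^{(1)}+\xi^{(2)}+\eta\in\Ker(\rmr_{G,N})$ with $\eta=\sum_{i,j}c_{ij}\,\chi_i\wedge\psi_j$, your restrictions to $H_1,H_2,H_0$ give $\xi^{(1)}=\gamma_1\smallsmile\chi_1$, $\xi^{(2)}=\psi_1\smallsmile\gamma_2$ and $c_{ij}=0$ for $i,j\geq2$, where $\gamma_1=\sum_i c_{i1}\chi_i$ and $\gamma_2=\sum_j c_{1j}\psi_j$; the assembly you leave as ``direct linear algebra'' does close, with explicit preimage $\xi=c_\alpha\bigl(\gamma_1-\gamma_2-c_{11}\chi_1\bigr)$ --- note this uses $\chi_1\smallsmile\chi_1=0$, so for $p=2$ torsion-free abelianization is needed here as well, not only to define $s,s'$. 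As for what each approach buys: the paper's count is lighter on bookkeeping but purely existential, while yours produces explicit preimages and makes visible a fact that the paper's proof leaves implicit, namely that the mixed case uses only the product structure and hypotheses (a)--(c), the $p$-absolute Galois type of the factors being consumed entirely by the first case (this is equally true of the paper's argument, whose linear-independence bounds never invoke that hypothesis either). Finally, your exponent-sum construction of $s,s'$ (legitimate because $G_i/G_i'$ free abelian of rank $d_i$ forces the relation subgroup of a minimal presentation into the derived subgroup of the free pro-$p$ group) is a cleaner route to $H_i\simeq G_i$ than the paper's argument via $K_1=H_1C=G_1\times C$, though both rest on the same hypothesis.
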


\begin{proof}
Let $\alpha$ be a non-trivial element of $\rmH^1(G,\Z/p)$, and set $N=\Ker(\alpha)$.

Suppose first that $\alpha\in\rmH^1(G_1,\Z/p)$.
Then $N=N_1\times G_2$, where $N_1=N\cap G_1=\Ker(\alpha\vert_{G_1})$, so that 
\[
 \rmH^2(N,\Z/p)=\rmH^2(N_1,\Z/p)\oplus\rmH^2(G_2,\Z/p)\oplus\left(\rmH^1(N_1,\Z/p)\wedge\rmH^1(G_2,\Z/p)\right)
\]
by \eqref{eq:directprod cohom}.
Let $V_0$ be a subspace of $\rmH^1(G_1,\Z/p)$ such that $\rmH^1(G_1,\Z/p)=V_0\oplus\Span\{\alpha\}$.
We decompose the map $\rmr_{G,N}$ into its restrictions to the direct summands of $\rmH^2(G,\Z/p)$ as follows:
\[
\xymatrix@C=0pt{  \rmH^2(G_1,\Z/p)\ar[d]^-{\rmr_{G_1,N_1}} \ar@<-4ex>@/_2pc/[d]|-{\rmr_{G,N}}   &\oplus&
\rmH^2(G_2,\Z/p)\ar@{=}[d]^-{\mathrm{Id}}
&\oplus& \left(V_0\wedge\rmH^1(G_2,\Z/p)\right)\oplus 
\left(\alpha\wedge\rmH^1(G_2,\Z/p)\right)\ar@<-8ex>[d]^-{\rmr'}\ar@<8ex>[d]^-{0}
\\  
\rmH^2(N_1,\Z/p)&\oplus&\rmH^2(G_2,\Z/p)&\oplus&\left(\rmH^1(N_1,\Z/p)\wedge\rmH^1(G_2,\Z/p)\right).
}\]
By hypothesis, $\Ker(\rmr_{G_1,N_1})=\Img(c_{\alpha\vert_{G_1}})$; while the map 
$$\rmr'=(\res_{G_1,N_1}^1\vert_{V_0})\wedge\mathrm{Id}\colon V_0\wedge \rmH^1(G_2,\Z/p)\longrightarrow\rmH^1(N_1,\Z/p)\wedge\rmH^1(G_2,\Z/p)$$ is injective, as $\Ker(\res_{G_1,N_1}^1)=\Span\{\alpha\}$.
Altogether, $\Ker(\rmr_{G,N})=\Img(c_\alpha)$.

Obviously, after switching $G_1$ and $G_2$ the same argument shows that $\Ker(\rmr_{G,N})=\Img(c_\alpha)$ if $\alpha\in\rmH^1(G_2,\Z/p)$.

Suppose now that $\alpha=\alpha_1+\alpha_2$, with $\alpha_1=\alpha\vert_{G_1}\in\rmH^1(G_1,\Z/p)$ and $\alpha_2=\alpha\vert_{G_2}\in\rmH^1(G_2,\Z/p)$.
Let $\calX=\{x_1,\ldots,x_{d_1}\}$ and $\calY=\{y_1,\ldots,y_{d_2}\}$ be minimal generating sets of $G_1$ and $G_2$ respectively, satisfying $\alpha_1(x_{d_1})=1$ and $\alpha(x_i)=0$ for $i\neq d_1$, and $\alpha_2(y_{d_2})=1$ and $\alpha(y_j)=0$ for $j\neq d_2$; and let 
\[\begin{split}
\mathcal{B}_1&=\left\{\:\chi_1,\:\ldots,\:\chi_{d_1}=\alpha_1\:\right\}\subseteq \rmH^1(G_1,\Z/p),\\ 
\mathcal{B}_2&=\left\{\:\psi_1,\:\ldots,\:\psi_{d_2}=\alpha_2\:\right\}\subseteq\rmH^1(G_2,\Z/p),
  \end{split}
  \]
be the bases dual to $\calX$ and $\calY$ respectively.
First, observe that $\alpha_1\smallsmile\alpha_2=\alpha\smallsmile\alpha_2=\alpha_1\smallsmile\alpha$, so that $\alpha_1\smallsmile\alpha_2\in\Img(c_\alpha)$.
Moreover, for every $1\leq i<d_1$ and $1\leq j<d_2$ one has { 
\begin{equation}\label{eq:directprod 1}
 \begin{split}
 \chi_i\smallsmile\alpha&=\chi_i\smallsmile\alpha_1+\chi_i\smallsmile\alpha_2=
 \chi_i\smallsmile\chi_{d_1}+\chi_i\smallsmile\psi_{d_2},\\
    \alpha\smallsmile\psi_j&=\alpha_1\smallsmile\psi_j+\alpha_2\smallsmile\psi_j=
    \chi_{d_1}\smallsmile\psi_j-\psi_j\smallsmile\psi_{d_2}.
  \end{split}
\end{equation}
We claim that the set
$$\mathcal{S}=\left\{\:\alpha_1\smallsmile\alpha_2,\:\chi_i\smallsmile\alpha,\:\alpha\smallsmile\psi_j\:\mid\: 1\leq i<d_1,\:1\leq j<d_2\:\right\}$$
is a linearly independent subset of $\rmH^2(G,\Z/p)$.
Indeed, suppose that there exist $a,b_i,c_j\in\Z/p$, with $1\leq i<d_1$ and $1\leq j<d_2$, such that
\begin{equation}\label{eq:directprod 2}
 a(\alpha_1\smallsmile\alpha_2)+\sum_{i=1}^{d_1-1}b_i(\chi_i\smallsmile\alpha)+\sum_{j=1}^{d_2-1}c_j(\alpha\smallsmile\psi_j)=0
\end{equation}
Applying \eqref{eq:directprod 1} to \eqref{eq:directprod 2} yields
\[\begin{split}
&\sum_{i=1}^{d_1-1}b_i(\chi_i\smallsmile\chi_{d_1})-\sum_{j=1}^{d_2-1}c_j(\psi_j\smallsmile\psi_{d_2})\:+ \\
&+\left(\sum_{i=1}^{d_1-1}b_i(\chi_i\smallsmile\psi_{d_2})+\sum_{j=1}^{d_2-1}c_j(\chi_{d_1}\smallsmile\psi_{j})+
a(\chi_{d_1}\smallsmile\psi_{d_2}) \right) =0.
\end{split}\]
Since the three sets $\{\chi_i\smallsmile\chi_j\:\mid\:1\leq i<j\leq d_2\}$, $\{\psi_i\smallsmile\psi_j\:\mid\:1\leq i<j\leq d_2\}$ and $\{\chi_i\smallsmile\psi_j\:\mid\:1\leq i\leq d_1,1\leq j\leq d_2\}$ are bases of the direct summands of $\rmH^2(G,\Z/p)$ by \eqref{eq:directprod cohom}, one has $a=b_i=c_j=0$ for every $i,j$.
Therefore, \begin{equation}\label{eq:dim imcalpha directprod}
            \dim(\Img(c_\alpha))\geq|\mathcal{S}|= (d_1-1)+(d_2-1)+1=d_1+d_2-1.
           \end{equation}}

Now put $z=x_{d_1}y_{d_2}^{-1}$ and 
$$\mathcal{Z}_1=\left\{\:x_1,\:\ldots,\:x_{d_1-1},\:z\:\right\},\qquad 
\mathcal{Z}_2=\left\{\: y_1,\:\ldots,\:y_{d_2-1},\:z\:\right\}.$$
Let $H_1$, $H_2$, and $H$, be the subgroups of $G$ generated by $\mathcal{Z}_1$, by $\mathcal{Z}_2$, and by $\mathcal{Z}_1\cup\mathcal{Z}_2$ respectively. 
Then $H_1,H_2\subseteq H\subseteq N$.
Since $\mathcal{Z}_1\cup\mathcal{Z}_2\cup\{x_{d_1}\}$ is a minimal generating set of $G$, the inclusions $H_i\hookrightarrow G$ (with $i\in\{1,2\}$) and $H\hookrightarrow G$ induce morphisms of $p$-elementary abelian groups $H_i/\Phi(H_i)\to G/\Phi(G)$ and $H/\Phi(H)\to G/\Phi(G)$ which are injective.
Therefore, $\mathcal{Z}_i$ is a minimal generating set of $H_i$, and $\mathcal{Z}_1\cup\mathcal{Z}_2$ is a minimal geberating set of $H$, so that by duality the sets 
\[\begin{split}
   \mathcal{B}_1&=\left\{\:\chi_1\vert_{H_1},\:\ldots,\:\chi_{d_1-1}\vert_{H_1},\:\chi_{d_1}\vert_{H_1}=\alpha_1\vert_{H_1}\:\right\},\\
\mathcal{B}_2&=\left\{\:\psi_1\vert_{H_2},\:\ldots,\:\psi_{d_2-1}\vert_{H_2},-\psi_{d_2}\vert_{H_2}=-\alpha_2\vert_{H_2}\:\right\},\\
 \mathcal{B}_H&=\left\{\:\chi_1\vert_{H},\:\ldots,\:\chi_{d_1-1}\vert_{H},\:\psi_1\vert_H,\:\ldots,\:\psi_{d_2-1}\vert_H,\:\alpha_1\vert_{H}=-\alpha_2\vert_{H}\:\right\}
  \end{split} 
\]
are bases of $\rmH^1(H_1,\Z/p)$, $\rmH^1(H_2,\Z/p)$, and $\rmH^1(G,\Z/p)$ respectively.

We claim that $H_1\simeq G_1$ and $H_2\simeq G_2$. 
Indeed, put $C=\langle y_{d_2}\rangle\simeq\Z_p$, and $K_1=H_1C$.
Since $G/G'=G_1/G_1'\times G_2/G_2'\simeq\Z_p^{d_1+d_2}$ by hypothesis, one has 
$$H_1/H_1'\simeq \Z_p^{d_1}\qquad\text{and}\qquad K_1/K_1'= H_1/H_1'\times C\simeq\Z_p^{d_1+1},$$
and consequently $H_1\cap C=\{1\}$ --- namely, $K_1=H_1\times C$.
On the other hand, $K_1$ is the subgroup of $G$ generated by $\{x_1,\ldots,x_{d_1},y_{d_2}\}$, i.e., $K_1=G_1\times C$.
Altogether, one has
\begin{equation}
 \xymatrix{ H_1=\dfrac{H_1}{H_1\cap C}\ar[r]^-{\tau_1} & \dfrac{H_1\times C}{C}=\dfrac{G_1\times C}{C} &
 G_1\ar[l]_-{\tau_2} }
\end{equation}
where both $\tau_1$ and $\tau_2$ are isomorphisms.
Put $\phi=\tau_2^{-1}\circ\tau_1$.
Then $\phi$ is an isomorphism, and in particular $\phi(x_i)=x_i$ for $i=1,\ldots,d_1-1$, and $\phi(z)=\tau_2^{-1}(zC)=\tau_2^{-1}(x_{d_1}C)=x_{d_1}$.
By duality, the isomorphism $\phi^\ast\colon \rmH^1(G_1,\Z/p)\to\rmH^1(H_1,\Z/p)$ induced by $\phi$ coincides with the restriction of $\res_{G,H_1}^1$ to $\rmH^1(G_1,\Z/p)$ --- in particular, $\phi^\ast(\chi_{d_1})=\alpha_1\vert_{H_1}=\chi_{d_1}\vert_{H_1}$.
Since $\rmH^2(G_1,\Z/p)=\rmH^1(G_1,\Z/p)\smallsmile\rmH^1(G_1,\Z/p)$, by \eqref{eq:cup res} also the restriction of $\rmr_{G,H_1}$ to $\rmH^2(G_1,\Z/p)$ is an isomorphism, so that if $\mathcal{S}_1=\{\chi_i\smallsmile\chi_j\mid(i,j)\in\mathcal{I}_1\}$ is a basis of $\rmH^2(G_1,\Z/p)$, then 
$$\rmr_{G,H_1}(\mathcal{S}_1)=\left\{\:(\chi_i\vert_{H_1})\smallsmile(\chi_j\vert_{H_1})\:\mid\:(i,j)\in\mathcal{I}_1\:\right\}$$
is a basis of $\rmH^2(H_1,\Z/p)$.
Thus, by the functoriality of the restriction map $\rmr_{G,H}(\mathcal{S}_1)$ is a linearly independent subset of $\rmH^2(H,\Z/p)$.
An analogous argument proves that if $\mathcal{S}_2=\{\psi_i\smallsmile\psi_j\mid(i,j)\in\mathcal{I}_2\}$ is a basis of $\rmH^2(G_2,\Z/p)$, then $\rmr_{G,H_2}(\mathcal{S}_2)$ is a basis of $\rmH^2(H_1,\Z/p)$, and $\rmr_{G,H}(\mathcal{S}_2)$ is a linearly independent subset of $\rmH^2(H,\Z/p)$.

Finally, in $H$ one has $(d_1-1)(d_2-1)$ relations $[x_i,y_j]=1$, with $i<d_1$ and $j<d_2$, which give rise to the subset
\[
\mathcal{S}_H= \left\{\:(\chi_i\vert_H)\smallsmile(\psi_j\vert_H)\:\mid\:1\leq i< d_1,\:1\leq j<d_2\:\right\}
\]
of $\rmH^2(H,\Z/p)$, which is linearly independent by Proposition~\ref{prop:cupprod}.

Altogether, the disjoint union 
$$\rmr_{G,H}(\mathcal{S}_1)\:\dot\cup\:\rmr_{G,H}(\mathcal{S}_2)\:\dot\cup\:\mathcal{S}_H\subset\rmH^2(H,\Z/p)$$ 
is a linearly independent subset of $\rmH^2(H,\Z/p)$, as each one of the three subsets is linearly independent, and one has $\rmr_{G,H_j}(\mathcal{S}_i)=\rmr_{H,H_i}(\mathcal{S}_H)=\{0\}$, for $i,j\in\{1,2\}$, $i\neq j$ --- while $\rmr_{G,H_i}(\mathcal{S}_i)$ is linearly independent for both $i=1,2$.
Therefore, by the functoriality of the restriction map one has 
\begin{equation}\label{eq:dim directprod res}
\begin{split}
\dim\left(\Img(\rmr_{G,N})\right)&\geq\dim\left(\Img(\rmr_{G,H})\right)\\ &\geq \dim\left(\rmH^2(G_1,\Z/p)\right)+\dim\left(\rmH^2(G_2,\Z/p)\right)+(d_1-1)(d_2-1).                                 \end{split}
\end{equation}
Summing up \eqref{eq:dim imcalpha directprod} and \eqref{eq:dim directprod res}, together with \eqref{eq:directprod cohom}, yields 
\[\begin{split}
   \dim\left(\Img(c_\alpha)\right)+ \dim\left(\Img(\rmr_{G,N})\right)&\geq \dim\left(\rmH^2(G_1,\Z/p)\right)+\dim\left(\rmH^2(G_2,\Z/p)\right)+d_1d_2 \\ &=\dim\left(\rmH^2(G,\Z/p)\right),
  \end{split}
\]
and therefore $\Ker(\rmr_{G,N})=\Img(c_\alpha)$. 
Then Theorem~\ref{prop:LLSWW}--(i) yields the claim.
\end{proof}

The list of finitely generated pro-$p$ groups $G$ of $p$-absolute Galois type satisfying the three conditions (a)--(c) in Theorem~\ref{thm:products intro}--(ii) includes:
\begin{itemize}
 \item[(a)] pro-$p$ RAAGs associated to simplicial graphs --- whose family includes, in turn, finitely generated free pro-$p$ groups and finitely generated free abelian pro-$p$ groups ---, by Theorems~\ref{thm:massey intro}--\ref{thm:AbsGalType intro};
 \item[(b)] Demushkin groups with torsion-free abelianization, or, equivalently, pro-$p$ completions of oriented surface groups --- namely, pro-$p$ groups with minimal presentation
 \[
  G=\left\langle\:x_1,\ldots,x_d\:\mid\:[x_1,x_2][x_3,x_4]\cdots[x_{d-1},x_d]\:\right\rangle
 \]
with $d$ even (these are precisely the Demushkin groups $G$ with invariant $q(G)=0$, cf. \cite{labute}) ---, by \cite[Thm.~4.3]{mt:massey} and Theorem~\ref{thm:Demushkin}.
\end{itemize}
There are only very few ways to combine these pro-$p$ groups via direct product, to obtain the maximal pro-$p$ Galois group of a field containing a root of 1 of order $p$, as stated by the following (see also \cite[Prop.~3.2]{koenig} and \cite[Cor.~1.2]{ilirslobo}).

\begin{prop}\label{prop:directprod Galois}
 Let $G_1,G_2$ be two pro-$p$ groups.
 Then the direct product $G_1\times G_2$ occurs as the maximal pro-$p$ Galois group of a field containing a root of 1 of order $p$ only if one of the two factors is a free abelian pro-$p$ group, and 
 the other factor occurs as the maximal pro-$p$ Galois group of a field containing all roots of 1 of $p$-power order.
\end{prop}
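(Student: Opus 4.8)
The plan is to extract the product structure from the two cohomological properties that every maximal pro-$p$ Galois group of a field containing a root of $1$ of order $p$ must enjoy: being Bloch-Kato and being $1$-cyclotomic (cf. §\ref{ssec:Gal}), both of which are hereditary with respect to closed subgroups. So suppose $G=G_1\times G_2\simeq G_{\K}(p)$ with a root of $1$ of order $p$ in $\K$. First I would set up the bookkeeping from \eqref{eq:directprod cohom}: every $\alpha\in\rmH^1(G,\Z/p)$ writes uniquely as $\alpha_1+\alpha_2$ with $\alpha_i=\alpha\vert_{G_i}$, the cross products $\alpha_1\smallsmile\alpha_2$ span the summand $\rmH^1(G_1,\Z/p)\wedge\rmH^1(G_2,\Z/p)$, and each factor $G_i$, being a closed subgroup of $G$, is again Bloch-Kato and $1$-cyclotomic. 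The second ingredient is the cyclotomic character $\theta\colon G\to 1+p\Z_p$ attached to the $G$-module $M\simeq\Z_p$ of Definition~\ref{defin:BK 1cyc}; as $\theta$ maps into an abelian group it factors as $\theta=\theta_1\cdot\theta_2$ with $\theta_i=\theta\vert_{G_i}$, and $\K$ contains \emph{all} roots of $1$ of $p$-power order precisely when $\theta$ (equivalently, both $\theta_i$) is trivial. The entire statement will follow once the pair $(\theta_1,\theta_2)$ and the shapes of the two factors are pinned down.

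The core step, and the main obstacle, is to force one factor to be free abelian while the orientation on its complement degenerates. I would argue by contradiction: if \emph{neither} $G_i$ were free abelian then, being Bloch-Kato (so that an abelian factor would already be free abelian away from the exceptional $2$-torsion), each $G_i$ would be non-abelian, giving $a_1,a_2\in G_1$ and $b_1,b_2\in G_2$ with $[a_1,a_2]\neq1$ and $[b_1,b_2]\neq1$. Inside $G=G_1\times G_2$ the four elements $a_1,b_1,a_2,b_2$ commute exactly along the edges of a $4$-cycle, the two ``diagonals'' $[a_1,a_2]$ and $[b_1,b_2]$ remaining non-trivial, so the closed subgroup they generate realizes the forbidden configuration $\mathrm{C}_4$ — the very pattern that obstructs realizability of pro-$p$ RAAGs in Theorem~\ref{thm:ilirpavel}, where $G_{\mathrm{C}_4}\simeq F\times F$ is not Bloch-Kato. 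The genuine difficulty is to upgrade this combinatorial heuristic into a rigorous contradiction with the hereditary Bloch-Kato and $1$-cyclotomic properties for arbitrary, possibly relation-bearing, factors; this is exactly the rigidity of $1$-cyclotomic direct products supplied by \cite[Prop.~3.2]{koenig}, which I would invoke to conclude that (after relabelling) $G_1\simeq\Z_p^r$ is free abelian and, moreover, that the orientation restricts trivially to the complementary factor, i.e. $\theta_2=1$.

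With $G_1\simeq\Z_p^r$ free abelian and $\theta_2=1$ in hand, the final step is to recognise $G_2$ as a maximal pro-$p$ Galois group of a field containing all roots of $1$ of $p$-power order. Since $\theta_2$ is trivial, $G_2$ lies in $\Ker(\theta)=\Gal(\K(p)/\K(\mu_{p^\infty}))=G_{\K(\mu_{p^\infty})}(p)$, the maximal pro-$p$ Galois group of $\K(\mu_{p^\infty})$, a field containing every $p$-power root of $1$. Decomposing $\Ker(\theta)=G_2\times\Ker(\theta_1)$ with $\Ker(\theta_1)\simeq\Z_p^{r-1}$ again free abelian, I would apply the realizability result \cite[Cor.~1.2]{ilirslobo} to peel off this free abelian direct factor and conclude that $G_2$ itself occurs as the maximal pro-$p$ Galois group of a field containing all roots of $1$ of $p$-power order; together with the free abelian factor $G_1$ from the previous step this is the claimed dichotomy. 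The two delicate points, both resolved by the quoted propositions, are the passage from the $\mathrm{C}_4$ heuristic to a rigorous product obstruction and the descent of realizability through the free abelian direct factor.
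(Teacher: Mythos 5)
Your skeleton --- (1) one factor is free abelian, (2) the cyclotomic orientation is trivial on the other factor, (3) Galois-theoretic descent to a field containing $\mu_{p^\infty}$ --- matches the paper's, but step (2) is a genuine gap. You attribute the conclusion ``$\theta_2=1$'' to \cite[Prop.~3.2]{koenig}, calling it ``the rigidity of $1$-cyclotomic direct products''. That reference (as this paper itself uses it, in the Introduction) yields only that one factor is free abelian and that both factors occur as maximal pro-$p$ Galois groups of fields containing a root of $1$ of order $p$; it says nothing about roots of $1$ of higher $p$-power order, and Koenigsmann's valuation-theoretic arguments neither use nor imply anything about $1$-cyclotomicity. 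The triviality of the orientation on the complementary factor is exactly the \emph{new} content of this proposition, and it is where the paper does its real work: assuming some root $\zeta$ of order $p^f$, $f\geq2$, lies in $\K(p)\smallsetminus\K$, one chooses $g\in G$ with $g.\zeta=\zeta^{1+p^{f-1}}$; since the free abelian factor $G_2$ is an abelian \emph{normal} subgroup of $G$, \cite[Thm.~7.7]{qw:cyc} forces $ghg^{-1}=h^{1+p^{f-1}}$ for every $h\in G_2$, contradicting the fact that $G_2$ is central in $G_1\times G_2$ and torsion-free (nontriviality of $G_2$ enters here). This shows $\K$ itself contains all roots of $1$ of $p$-power order. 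Nothing in your proposal supplies this argument, and your $\mathrm{C}_4$ heuristic cannot: at best it points toward the free-abelian-factor statement (which is anyway circular, since deciding whether $\langle a_1,a_2\rangle\times\langle b_1,b_2\rangle$ can be Bloch--Kato \emph{is} the product question), not toward the statement about the orientation.

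Two secondary remarks. First, for the free-abelian step the paper's proof actually invokes \cite[Thm.~5.6]{cq:bk} rather than Koenigsmann, but since the Introduction cites \cite[Prop.~3.2]{koenig} for precisely that statement, your use of it there is defensible. Second, your final ``peeling off'' step is overbuilt: once $\theta\vert_{G_2}=1$ is known, you do not need \cite[Cor.~1.2]{ilirslobo} at all --- $G_2$ is a closed subgroup of $G_{\K}(p)$, hence $G_2=G_{L}(p)$ for $L=\K(p)^{G_2}$ by the fundamental theorem of Galois theory, and $L\supseteq\K(\mu_{p^\infty})$ precisely because $G_2\subseteq\Ker(\theta)$; this is also how the paper opens its proof. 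These two points are cosmetic, however; the unproved assertion $\theta_2=1$ is the defect that leaves the heart of the proposition unestablished.
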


\begin{proof}
 Suppose that $G$ occurs as the maximal pro-$p$ Galois group of a field $\K$ containing a root of 1 of order $p$.
 Then by the Fundamental Theorem of Galois theory, for both $i=1,2$, $G_i$ is isomorphic to $G_{\K_i}(p)$, with $\K_i=\K(p)^{G_i}$ --- and clearly both fields $\K_1,\K_2$ contain a root of 1 of order $p$.
 
 By \cite[Thm.~5.6]{cq:bk}, the direct product of two pro-$p$ groups may occur as the maximal pro-$p$ Galois group of a field containing a root of 1 of order $p$ only if one of the two factors --- say, $G_2$ in our case --- is a free abelian pro-$p$ group. Now suppose that there is a root $\zeta$ of 1 of order $p^f$, with $f\geq2$, lying in $\K(p)$ but not in $\K$.
Then $\K(\zeta)/\K$ is a Galois extension, of degree at most $p^{f-1}$.
Let $g$ be an element of $G$ such that $g.\zeta=\zeta^{1+p^{f-1}}$ --- i.e., $g$ surjects to a suitable generator of the subquotient 
 \[
  \Gal(\K(\zeta)/\K(\zeta^p))=\frac{G_{\K(\zeta^p)}(p)}{G_{\K(\zeta)}(p)}\simeq \Z/p
 \]
of $G$.
Since $G_2$ is an abelian normal subgroup of $G$, \cite[Thm.~7.7]{qw:cyc} implies that 
$$ghg^{-1}=h^{1+p^{f-1}}\qquad\text{for every }h\in G_2.$$
This contradicts the fact that, by hypothesis, $G_2$ is contained in the center of $G$, as $f\geq2$.
Thus, $\K$ (and hence also $\K_1$) contains every root of 1 of $p$-power order lying in $\K(p)$.

Conversely, if $G_1\simeq G_{\bar\K}(p)$ for some field $\bar\K$ containing all roots of 1 of $p$-power order, and $G_2$ is a free abelian pro-$p$ group, then it is well-known that $G$ occurs as the maximal pro-$p$ Galois group of the field of Laurent series $\bar\K(\!(\mathcal{X})\!)$, where $\calX=\{X_i\:\mid\:i\in \mathcal{I}\}$ is a set of indeterminates in bijection with a minimal generating set of $G_2$ (cf., e.g., \cite[Ex.~4.10]{cq:bk}).
\end{proof}

Altogether, Theorems~\ref{thm:massey intro}--\ref{thm:products intro} and Theorem~\ref{thm:directprod massey}
provide a huge amount of pro-$p$ groups of $p$-absolute Galois type with the $n$-Massey vanishing property for every $n\geq3$.
Still, only few of them occur as the maximal pro-$p$ group of a field containing a root of 1 of order $p$, because of the restrictions given by Theorem~\ref{thm:ilirpavel} and Proposition~\ref{prop:directprod Galois}.
This yields Corollary~\ref{cor:noGal}.

\subsection{Pro-$p$ groups hereditarily of $p$-absolute Galois type}\label{ssec:hered}

 We say that a pro-$p$ group $G$ is {\sl hereditarily of $p$-absolute Galois type} if every closed subgroup of $G$ is of $p$-absolute Galois type.
Clearly, the maximal pro-$p$ Galois group of a field containing a root of 1 of order $p$ is hereditarily of $p$-absolute Galois type, as every closed subgroup is again the maximal pro-$p$ Galois group of a field containing a root of 1 of order $p$.

The following result shows that, in order to verify the hereditariety, it is enough to check open subgroups, in analogy with the Bloch-Kato property and 1-cyclotomicity (cf. \cite[Cor.~3.2 and Cor.~3.5]{qw:cyc}).

\begin{prop}\label{prop:hereditarily}
 Let $G$ be a pro-$p$ group.
 If every open subgroup of $G$ is of $p$-absolute Galois type, then $G$ is hereditarily of $p$-absolute Galois type.
\end{prop}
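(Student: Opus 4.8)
The plan is to reduce everything to a single colimit computation. By Theorem~\ref{prop:LLSWW}--(i) it is enough to show that every closed subgroup $H\leq G$ satisfies the weaker condition that the sequence \eqref{eq:exactsequence intro}, with $G$ replaced by $H$, is exact at $\rmH^2(H,\Z/p)$ for every $\alpha\in\rmH^1(H,\Z/p)$; and since that sequence is always a complex (cf.\ \eqref{eq:complex}), I only need the inclusion $\Ker(\rmr_{H,N})\subseteq\Img(c_\alpha)$, where $N=\Ker(\alpha)$ and $c_\alpha$ denotes cup-product by $\alpha$. The engine of the argument is the standard continuity of profinite cohomology: for a closed subgroup $H$ one has $\rmH^n(H,\Z/p)\cong\varinjlim_U\rmH^n(U,\Z/p)$, the colimit being taken over the open subgroups $U$ of $G$ containing $H$ with the restriction maps as transition morphisms (cf.\ \cite[Ch.~I, \S~2.2]{serre:galc}). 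In particular, in degrees $1$ and $2$ every class on $H$ is the restriction of a class living on some open $U\supseteq H$.

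First I would fix $\alpha\neq0$ (the case $\alpha=0$ is trivial) and a class $\beta\in\rmH^2(H,\Z/p)$ with $\rmr_{H,N}(\beta)=0$, and lift both $\alpha$ and $\beta$ to a common open subgroup: there is an open $U\supseteq H$ together with $\tilde\alpha\in\rmH^1(U,\Z/p)$ and $\tilde\beta\in\rmH^2(U,\Z/p)$ restricting to $\alpha$ and $\beta$ on $H$ (intersect the two open subgroups produced by the colimit). Writing $\tilde N=\Ker(\tilde\alpha)$, which is open of index $p$ in $U$ since $\tilde\alpha\vert_H=\alpha\neq0$, one has $N=\tilde N\cap H$. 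The vanishing $\rmr_{H,N}(\beta)=0$ only tells me that $\rmr_{U,\tilde N}(\tilde\beta)$ restricts to $0$ on $N$, not on all of $\tilde N$ — so I cannot yet feed $\tilde\beta$ into the hypothesis on $U$.

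The crux, and the step I expect to be the main obstacle, is to repair this by shrinking $U$. Applying the colimit description of $\rmH^2(N,\Z/p)$ to the class $\rmr_{U,\tilde N}(\tilde\beta)$, which dies in $\rmH^2(N,\Z/p)$, I obtain an open subgroup $V$ of $\tilde N$ containing $N$ on which it already vanishes. Now the family $\{\tilde N\cap U'\}$, indexed by the open $U'$ with $H\subseteq U'\subseteq U$, is a downward directed system of closed subgroups of $\tilde N$ whose intersection is $\tilde N\cap H=N\subseteq V$; since $\tilde N\smallsetminus V$ is compact, a finite-intersection (compactness) argument yields a single open $U_1$ with $H\subseteq U_1\subseteq U$ and $\tilde N_1:=\tilde N\cap U_1\subseteq V$. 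Then $\tilde N_1=\Ker(\tilde\alpha\vert_{U_1})$ and, restricting through $V$, the class $\rmr_{U,U_1}(\tilde\beta)$ now genuinely lies in $\Ker(\rmr_{U_1,\tilde N_1})$.

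Finally, $U_1$ is open, hence of $p$-absolute Galois type by hypothesis, so exactness of \eqref{eq:exactsequence intro} for $U_1$ and $\tilde\alpha\vert_{U_1}$ produces $\tilde\gamma\in\rmH^1(U_1,\Z/p)$ with $\rmr_{U,U_1}(\tilde\beta)=\tilde\gamma\smallsmile(\tilde\alpha\vert_{U_1})$; restricting to $H$ and using \eqref{eq:cup res} gives $\beta=(\tilde\gamma\vert_H)\smallsmile\alpha=c_\alpha(\tilde\gamma\vert_H)\in\Img(c_\alpha)$, as desired. I expect the only delicate points to be bookkeeping the various restriction maps (all the nested compositions collapse to a single $\rmr_{U,?}$) and the compactness step; the rest is formal. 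This mirrors the way hereditariety for the Bloch--Kato property and for $1$-cyclotomicity is reduced to open subgroups in \cite{qw:cyc}.
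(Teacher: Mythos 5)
Your proof is correct and takes essentially the same route as the paper's: reduce to exactness at $\rmH^2$ via Theorem~\ref{prop:LLSWW}--(i), lift $\alpha$ and $\beta$ to a common open subgroup $U\supseteq H$ by continuity of cohomology, shrink $U$ so that the vanishing on the kernel is witnessed at a finite level, then invoke the hypothesis on the open subgroup and restrict back to $H$ using \eqref{eq:cup res}. The only difference is cosmetic: where the paper simply says ``after taking $U$ even smaller, we may assume'' that $\rmr_{U,N_{\alpha_U}}(\beta_U)=0$, you justify this step explicitly via the colimit over open subgroups of $\tilde N$ containing $N$ plus a compactness argument --- an unpacking of the same continuity principle (one can also get it directly by applying \cite[Ch.~I, \S~2.2, Prop.~8]{serre:galc} to the decreasing filtered family of closed subgroups $\Ker(\alpha_U)\cap U'$, whose intersection is $N$).
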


\begin{proof}
 Let $H$ be a closed subgroup of $G$.
 By Theorem~\ref{prop:LLSWW}-(i), it is enough to show that for every non-trivial $\alpha\in\rmH^1(H,\Z/p)$, one has $\Ker(\rmr_{H,N_\alpha})=\Img(c_\alpha)$, where $N_\alpha=\Ker(\alpha)$ and $c_\alpha\colon \rmH^1(H,\Z/p)\to\rmH^2(H,\Z/p)$ denotes, as usual, the map induced by the cup-product by $\alpha$.
 
 Since $H$ is a closed subgroup of $G$, one has $H=\bigcap_{U\in\mathcal{U}_H}U$, where $\mathcal{U}_H$ is the set of all open subgroups of $G$ containing $H$ (cf. \cite[Prop.~1.2--(iii)]{ddsms}), and thus 
 \begin{equation}\label{eq:cohom H and U serre}
  \rmH^n(H,\Z/p)=\varinjlim_{U\in\mathcal{U}_H}\rmH^n(U,\Z/p)\qquad\text{for every }n\geq 1,
 \end{equation}
where the morphisms of the injective limit are given by the restriction maps $\res_{U,V}^n$ for every $U,V\in\mathcal{U}_H$ such that $U\supseteq V\supseteq H$ (cf. \cite[Ch.~I, \S~2.2, Prop.~8]{serre:galc}).
In particular, for $U\in\mathcal{U}_H$ sufficiently small, there exists $\alpha_U\in\rmH^1(U,\Z/p)$ such that the restriction $\alpha_U\vert_H$ is $\alpha$.
Then one has a commutative diagram
\begin{equation}\label{eq:commdiag H U}
 \xymatrix@C=1.3truecm{ \rmH^1(U,\Z/p)\ar[r]^-{c_{\alpha_U}}\ar[d]^{\mathrm{res}_{U,H}^1} &
 \rmH^2(U,\Z/p)\ar[r]^-{\rmr_{U,N_{\alpha_U}}}\ar[d]^-{\rmr_{U,H}} &
 \rmH^2(N_{\alpha_U},\Z/p)\ar[d]^-{\rmr_{H_{\alpha_U},H_\alpha}} \\
 \rmH^1(H,\Z/p)\ar[r]^{c_{\alpha}} & \rmH^2(H,\Z/p)\ar[r]^-{\rmr_{H,N_\alpha}} & \rmH^2(N_{\alpha},\Z/p)
 }
\end{equation}
where $c_{\alpha_U}$ denotes the map induced by the cup-product by $\alpha_U$, and $N_{\alpha_U}=\Ker(\alpha_U)$, and the top row is exact by hypothesis.

Now pick $\beta\in\Ker(\rmr_{H,N})$, $\beta\neq0$.
After taking $U$ even smaller, we may assume that there exists $\beta_U\in\rmH^2(U,\Z/p)$ such that $\beta=\rmr_{U,H}(\beta_U)$ and $\rmr_{U,N_{\alpha_U}}(\beta_U)=0$.
Since the top row of \eqref{eq:commdiag H U} is exact, there exists $\alpha'\in\rmH^1(U,\Z/p)$ such that $\beta_U=\alpha'\smallsmile\alpha_U$, and thus $\beta=(\alpha'\vert_H)\smallsmile\alpha$ by \eqref{eq:cup res}.
Hence $\beta\in\Img(c_\alpha)$.
\end{proof}

{\small \subsection*{Acknowledgements}
The authors are deeply indebted with Preston~Wake, as this paper originates from some private discussions between him and the third-named author --- in particular, he suggested the definition of pro-$p$ groups hereditarily of $p$-absolute Galois type.
Moreover, the authors thank Jan~Mina\v{c} and Nguyen~Duy~T\^an, for their comments, and the first- and second-named authors thank the Department of Mathematics of the University of Zaragoza, for the hospitality enjoyed during their graduate studies.

\noindent
The core of this paper was conceived --- and discussed within the authors --- during the conference ``New trends around profinite groups'', hosted by {\sl Centro Internazionale per la Ricerca Matematica}--CIRM, which took place in Levico Terme (Italy) in Sept. 2021: so, the authors wish to thank CIRM together with Eloisa~Detomi, Benjamin~Klopsch, Peter~Symonds, Thomas~S.~Weigel, and Christopher~Voll, for their effort in the organization in such hard times.

\noindent
Last, but not least, the authors thank the anonymous referee, for their careful work with the manuscript, and their numerous useful comments and suggestions, which led to the improvement of the paper.
}

\begin{bibdiv}
\begin{biblist}

\bib{BLMS}{article}{
   author={Benson, D.},
   author={Lemire, N.},
   author={Mina\v{c}, J.},
   author={Swallow, J.},
   title={Detecting pro-$p$-groups that are not absolute Galois groups},
   journal={J. Reine Angew. Math.},
   volume={613},
   date={2007},
   pages={175--191},
   issn={0075-4102},
}

\bib{BQW:raags}{unpublished}{
   author={Blumer, S.},
   author={Quadrelli, C.},
   author={Weigel, Th.S.}
   title={Oriented right-angled Artin pro-$p$ groups and absolute Galois groups},
   date={2022},
   note={In preparation},
}

\bib{AC:RAAGs EK}{article}{
   author={Cassella, A.},
   author={Quadrelli, C.},
   title={Right-angled Artin groups and enhanced Koszul properties},
   journal={J. Group Theory},
   volume={24},
   date={2021},
   number={1},
   pages={17--38},
}
	
\bib{RAAGs}{article}{
   author={Charney, R.},
   title={An introduction to right-angled Artin groups},
   journal={Geom. Dedicata},
   volume={125},
   date={2007},
   pages={141--158},
}
   
\bib{cem}{article}{
   author={Chebolu, S.K.},
   author={Efrat, I.},
   author={Mina\v{c}, J.},
   title={Quotients of absolute Galois groups which determine the entire
   Galois cohomology},
   journal={Math. Ann.},
   volume={352},
   date={2012},
   number={1},
   pages={205--221},
   issn={0025-5831},
}

\bib{graph:book}{book}{
   author={Diestel, R.},
   title={Graph theory},
   series={Graduate Texts in Mathematics},
   volume={173},
   edition={5},
   publisher={Springer, Berlin},
   date={2017},
   pages={xviii+428},
}

\bib{ddsms}{book}{
   author={Dixon, J.D.},
   author={du Sautoy, M.P.F.},
   author={Mann, A.},
   author={Segal, D.},
   title={Analytic pro-$p$ groups},
   series={Cambridge Studies in Advanced Mathematics},
   volume={61},
   edition={2},
   publisher={Cambridge University Press, Cambridge},
   date={1999},
   pages={xviii+368},
   isbn={0-521-65011-9},
}

\bib{droms:chord}{article}{
   author={Droms, C.},
   title={Graph groups, coherence, and three-manifolds},
   journal={J. Algebra},
   volume={106},
   date={1987},
   number={2},
   pages={484--489},
   issn={0021-8693},
}

\bib{droms:etc}{article}{
   author={Droms, C.},
   title={Subgroups of graph groups},
   journal={J. Algebra},
   volume={110},
   date={1987},
   number={2},
   pages={519--522},
   issn={0021-8693},
}

\bib{ido:etc}{article}{
   author={Efrat, I.},
   title={Orderings, valuations, and free products of Galois groups},
   journal={Sem. Structure Alg\'ebriques Ordonn\'ees, Univ. Paris VII},
   date={1995},
}

\bib{ido:etc2}{article}{
   author={Efrat, I.},
   title={Pro-$p$ Galois groups of algebraic extensions of $\mathbf{Q}$},
   journal={J. Number Theory},
   volume={64},
   date={1997},
   number={1},
   pages={84--99},
}

\bib{ido:massey}{article}{
   author={Efrat, I.},
   title={The Zassenhaus filtration, Massey products, and representations of
   profinite groups},
   journal={Adv. Math.},
   volume={263},
   date={2014},
   pages={389--411},
   issn={0001-8708},
}
\bib{EM:massey}{article}{
   author={Efrat, I.},
   author={Matzri, E.},
   title={Triple Massey products and absolute Galois groups},
   journal={J. Eur. Math. Soc. (JEMS)},
   volume={19},
   date={2017},
   number={12},
   pages={3629--3640},
   issn={1435-9855},
}

\bib{eq:kummer}{article}{
   author={Efrat, I.},
   author={Quadrelli, C.},
   title={The Kummerian property and maximal pro-$p$ Galois groups},
   journal={J. Algebra},
   volume={525},
   date={2019},
   pages={284--310},
   issn={0021-8693},
}

\bib{WDG}{article}{
   author={Geyer, W.-D.},
   title={Field theory},
   conference={
      title={Travaux math\'{e}matiques. Vol. XXII},
   },
   book={
      series={Trav. Math.},
      volume={22},
      publisher={Fac. Sci. Technol. Commun. Univ. Luxemb., Luxembourg},
   },
   date={2013},
   pages={5--177},
}

	\bib{HW:book}{book}{
   author={Haesemeyer, C.},
   author={Weibel, Ch.},
   title={The norm residue theorem in motivic cohomology},
   series={Annals of Mathematics Studies},
   volume={200},
   publisher={Princeton University Press, Princeton, NJ},
   date={2019},
}

\bib{HW:massey}{article}{
   author={Harpaz, Y.},
   author={Wittenberg, O.},
   title={The Massey vanishing conjecture for number fields},
   journal={Duke Math. J.},
   note={To appear, available at {\tt arXiv:1904.06512}},
   date={2022},
}

\bib{hopwick}{article}{
   author={Hopkins, M.J.},
   author={Wickelgren, K.G.},
   title={Splitting varieties for triple Massey products},
   journal={J. Pure Appl. Algebra},
   volume={219},
   date={2015},
   number={5},
   pages={1304--1319},
}

\bib{koenig}{article}{
        author={Koenigsmann, J.},
		title={Products of absolute {G}alois groups},
		journal={Int. Math. Res. Not.},
		date={2005},
		number={24},
		pages={1465--1486},}

	\bib{KW:raags}{article}{
   author={Kropholler, R.},
   author={Wilkes, G.},
   title={Profinite properties of RAAGs and special groups},
   journal={Bull. Lond. Math. Soc.},
   volume={48},
   date={2016},
   number={6},
   pages={1001--1007},
}

\bib{labute}{article}{
   author={Labute, J.P.},
   title={Classification of Demushkin groups},
   journal={Canadian J. Math.},
   volume={19},
   date={1967},
   pages={106--132},
   issn={0008-414X},
}

\bib{LLSWW}{unpublished}{
   author={Lam, Y.H.J.},
   author={Liu, Y.},
   author={Sharifi, R.T.},
   author={Wake, P.},
   author={Wang, J.},
   title={Generalized Bockstein maps and Massey products},
   date={2020},
   note={Preprint, available at {\tt arXiv:2004.11510}},
}

\bib{lazard}{article}{
   author={Lazard, M.},
   title={Groupes analytiques $p$-adiques},
   language={French},
   journal={Inst. Hautes \'{E}tudes Sci. Publ. Math.},
   number={26},
   date={1965},
   pages={389--603},
   issn={0073-8301},
}
		
		\bib{lorensen}{article}{
   author={Lorensen, K.},
   title={Groups with the same cohomology as their pro-$p$ completions},
   journal={J. Pure Appl. Algebra},
   volume={214},
   date={2010},
   number={1},
   pages={6--14},
   issn={0022-4049},
}

\bib{eli:massey}{unpublished}{
   author={Matzri, E.},
   title={Triple Massey products in Galois cohomology},
   date={2014},
   note={Available at {\tt arXiv:1411.4146}},
}

\bib{mersus}{article}{
   author={Merkur\cprime ev, A.S.},
   author={Suslin, A.A.},
   title={$K$-cohomology of Severi-Brauer varieties and the norm residue
   homomorphism},
   language={Russian},
   journal={Izv. Akad. Nauk SSSR Ser. Mat.},
   volume={46},
   date={1982},
   number={5},
   pages={1011--1046, 1135--1136},
   issn={0373-2436},
}

\bib{MPQT}{article}{
   author={Mina\v{c}, J.},
   author={Pasini, F.W.},
   author={Quadrelli, C.},
   author={T\^{a}n, N.D.},
   title={Koszul algebras and quadratic duals in Galois cohomology},
   journal={Adv. Math.},
   volume={380},
   date={2021},
   pages={Paper No. 107569, 49},
}

\bib{birs}{report}{
   author={Mina\v{c}, J.},
   author={Pop, F.},
   author={Topaz, A.},
   author={Wickelgren, K.},
   title={Nilpotent Fundamental Groups},
   date={2017},
   note={Report of the workshop ``Nilpotent Fundamental Groups'', Banff AB, Canada, June 2017},
   eprint={https://www.birs.ca/workshops/2017/17w5112/report17w5112.pdf},
   organization={BIRS for Mathematical Innovation and Discovery},
   conference={
      title={Nilpotent Fundamental Groups 17w5112},
      address={Banff AB, Canada},
      date={June 2017}},
}

   \bib{mt:conj}{article}{
   author={Mina\v{c}, J.},
   author={T\^{a}n, N.D.},
   title={The kernel unipotent conjecture and the vanishing of Massey
   products for odd rigid fields},
   journal={Adv. Math.},
   volume={273},
   date={2015},
   pages={242--270},
}

\bib{mt:docu}{article}{
   author={Mina\v{c}, J.},
   author={T\^{a}n, N.D.},
   title={Triple Massey products over global fields},
   journal={Doc. Math.},
   volume={20},
   date={2015},
   pages={1467--1480},
}

\bib{MT:masseyall}{article}{
   author={Mina\v{c}, J.},
   author={T\^{a}n, N.D.},
   title={Triple Massey products vanish over all fields},
   journal={J. London Math. Soc.},
   volume={94},
   date={2016},
   pages={909--932},
}

   \bib{JT:U4}{article}{
   author={Mina\v{c}, J.},
   author={T\^{a}n, N.D.},
   title={Counting Galois $\dbU_4(\F_p)$-extensions using Massey products},
   journal={J. Number Theory},
   volume={176},
   date={2017},
   pages={76--112},
   issn={0022-314X},
}
	
\bib{mt:massey}{article}{
   author={Mina\v{c}, J.},
   author={T\^{a}n, N.D.},
   title={Triple Massey products and Galois theory},
   journal={J. Eur. Math. Soc. (JEMS)},
   volume={19},
   date={2017},
   number={1},
   pages={255--284},
   issn={1435-9855},
}

\bib{nsw:cohn}{book}{
   author={Neukirch, J.},
   author={Schmidt, A.},
   author={Wingberg, K.},
   title={Cohomology of number fields},
   series={Grundlehren der Mathematischen Wissenschaften},
   volume={323},
   edition={2},
   publisher={Springer-Verlag, Berlin},
   date={2008},
   pages={xvi+825},
   isbn={978-3-540-37888-4},}
  
\bib{pal:massey}{unpublished}{
   author={P\'al, A.},
   author={Szab\'o, E.},
   title={The strong Massey vanishing conjecture for fields with virtual cohomological dimension at most 1},
   date={2018},
   note={Preprint, available at {\tt arXiv:1811.06192}},
}
   
\bib{papa:raags}{article}{
   author={Papadima, S.},
   author={Suciu, A.I.},
   title={Algebraic invariants for right-angled Artin groups},
   journal={Math. Ann.},
   volume={334},
   date={2006},
   number={3},
   pages={533--555},
}

\bib{cq:bk}{article}{
   author={Quadrelli, C.},
   title={Bloch-Kato pro-$p$ groups and locally powerful groups},
   journal={Forum Math.},
   volume={26},
   date={2014},
   number={3},
   pages={793--814},
   issn={0933-7741},
}

\bib{cq:noGal}{article}{
   author={Quadrelli, C.},
   title={Two families of pro-$p$ groups that are not absolute Galois groups},
   date={2022},
   volume={25},
   journal={J. Group Theory},
   number={1},
   pages={25--62},   
}

\bib{cq:Galfeat}{article}{
   author={Quadrelli, C.},
   title={Galois-theoretic features for 1-smooth pro-$p$ groups},
   journal={Canad. Math. Bull.},
   volume={65},
   date={2022},
   number={2},
   pages={525--541},
   issn={0008-4395},
}


\bib{qsv:quadratic}{article}{
   author={Quadrelli, C.},
   author={Snopce, I.},
   author={Vannacci, M.},
   title={On pro-$p$ groups with quadratic cohomology},
   date={2022},
   journal={J. Algebra},
   volume={612},
   pages={636--690},
}

\bib{qw:cyc}{article}{
   author={Quadrelli, C.},
   author={Weigel, Th.S.},
   title={Profinite groups with a cyclotomic $p$-orientation},
   date={2020},
   volume={25},
   journal={Doc. Math.},
   pages={1881--1916}
   }

\bib{ribzal:book}{book}{
   author={Ribes, L.},
   author={Zalesski\u{\i}, P.A.},
   title={Profinite groups},
   series={Ergebnisse der Mathematik und ihrer Grenzgebiete. 3. Folge. A
   Series of Modern Surveys in Mathematics},
   volume={40},
   edition={2},
   publisher={Springer-Verlag, Berlin},
   date={2010},
   pages={xvi+464},
   isbn={978-3-642-01641-7},
}

\bib{rost}{article}{
   author={Rost, M.},
   title={Norm varieties and algebraic cobordism},
   conference={
      title={Proceedings of the International Congress of Mathematicians,
      Vol. II},
      address={Beijing},
      date={2002},
   },
   book={
      publisher={Higher Ed. Press, Beijing},
   },
   date={2002},
   pages={77--85},}

\bib{serre:galc}{book}{
   author={Serre, J.-P.},
   title={Galois cohomology},
   series={Springer Monographs in Mathematics},
   edition={Corrected reprint of the 1997 English edition},
   note={Translated from the French by Patrick Ion and revised by the
   author},
   publisher={Springer-Verlag, Berlin},
   date={2002},
   pages={x+210},
   isbn={3-540-42192-0},}
		
\bib{serre:trees}{book}{
   author={Serre, J.-P.},
   title={Trees},
   series={Springer Monographs in Mathematics},
   note={Translated from the French original by John Stillwell;
   Corrected 2nd printing of the 1980 English translation},
   publisher={Springer-Verlag, Berlin},
   date={2003},
   pages={x+142},
   isbn={3-540-44237-5},
}
		
\bib{sharifi:massey}{article}{
   author={Sharifi, R.T.},
   title={Massey products and ideal class groups},
   journal={J. Reine Angew. Math.},
   volume={603},
   date={2007},
   pages={1--33},
   issn={0075-4102},
}

\bib{ilirslobo}{unpublished}{
	author={Snopce, I.},
	author={Tanushevski, S.},
	title={Frattini-resistant direct products of pro-$p$ groups},
	date={2022},
	note={Preprint, available at {\tt arXiv:2206.07863}},
}

\bib{sz:raags}{article}{
	author={Snopce, I.},
	author={Zalesski\u{\i}, P.A.},
	title={Right-angled Artin pro-$p$ groups},
	date={2022},
	volume={54},
	number={5},
	journal={Bull. Lond. Math. Soc.},
	pages={1904--1922},
}

\bib{sw:cohom}{article}{
   author={Symonds, P.},
   author={Weigel, Th.S.},
   title={Cohomology of $p$-adic analytic groups},
   conference={
      title={New horizons in pro-$p$ groups},
   },
   book={
      series={Progr. Math.},
      volume={184},
      publisher={Birkh\"{a}user Boston, Boston, MA},
   },
   date={2000},
   pages={349--410},
}

\bib{voev}{article}{
   author={Voevodsky, V.},
   title={On motivic cohomology with $\mathbf{Z}/l$-coefficients},
   journal={Ann. of Math. (2)},
   volume={174},
   date={2011},
   number={1},
   pages={401--438},
   issn={0003-486X},
   }

   \bib{vogel}{report}{
   author={Vogel, D.},
   title={Massey products in the Galois cohomology of number fields},
   date={2004},
   note={PhD thesis, University of Heidelberg},
   eprint={http://www.ub.uni-heidelberg.de/archiv/4418},
}
	
%
\bib{weibel}{article}{
   author={Weibel, Ch.},
   title={The norm residue isomorphism theorem},
   journal={J. Topol.},
   volume={2},
   date={2009},
   number={2},
   pages={346--372},
   issn={1753-8416},
}

\bib{wolk}{article}{
   author={Wolk, E.S.},
   title={A note on ``The comparability graph of a tree''},
   journal={Proc. Amer. math. Soc.},
   volume={16},
   date={1965},
   pages={17--20},
}

\bib{wurf}{article}{
   author={W\"{u}rfel, T.},
   title={On a class of pro-$p$ groups occurring in Galois theory},
   journal={J. Pure Appl. Algebra},
   volume={36},
   date={1985},
   number={1},
   pages={95--103},
   issn={0022-4049},
}
\end{biblist}
\end{bibdiv}
\end{document}